\newtheoremstyle{rem}{1.3ex}{1.3ex}{\rmfamily}{}
{\itshape\rmfamily}{}{1.5ex}{}
\newtheorem{theorem}{Theorem}[section]
\newtheorem{lemma}[theorem]{Lemma}
\newtheorem{cor}[theorem] {Corollary}
\theoremstyle{definition}
\newtheorem{example}[theorem] {Example}
\newtheorem{remark}[theorem] {Remark}
\renewcommand{\section}{\secdef\sct\sect}
\newcommand{\sct}[2][default]{\refstepcounter{section}
\setcounter{equation}{0}
\vspace{0.5cm}
\centerline{ \large
\scshape \arabic{section}. #1}
\vspace{0.3cm}}
\newcommand{\sect}[1]{
\vspace{0.5cm}
\centerline{\large\scshape #1}
\vspace{0.3cm}}
\renewcommand{\subsection}{\secdef \subsct\sbsect}
\newcommand{\subsct}[2][default]{\refstepcounter{subsection}
\nopagebreak
\vspace{0.5\baselineskip}
{\flushleft\bf \arabic{section}.\arabic{subsection}~\bf #1  }
\nopagebreak}
\newcommand{\sbsect}[1]{\vspace{0.1cm}\noindent
{\bf #1}\vspace{0.1cm}}
\newcommand{\Hcal}{{\mathcal {H}}}
\newcommand{\const}{{\operatorname {const.}\,}}
\renewcommand{\b}     {\beta}
\newcommand{\R}     {\mathbb{R}}
\newcommand{\Z}     {\mathbb{Z}}
\newcommand{\N}     {\mathbb{N}}
\newcommand{\E}     {\mathbb{E}}
\def\1{{\mathchoice {1\mskip-4mu\mathrm l}
                    {1\mskip-4mu\mathrm l}
                    {1\mskip-4.5mu\mathrm l} {1\mskip-5mu\mathrm l}}}
\begin{document}

\title[Stein's method for dependent random variables occurring in Statistical Mechanics]{\large
Stein's method for dependent random variables \\occurring in Statistical Mechanics}

\author[Peter Eichelsbacher and Matthias L\"owe]{}
\maketitle
\thispagestyle{empty}
\vspace{0.2cm}

\centerline{\sc Peter Eichelsbacher\footnote{Ruhr-Universit\"at Bochum, Fakult\"at f\"ur Mathematik,
NA3/68, D-44780 Bochum, Germany, {\tt Peich@math.ruhr-uni-bochum.de  }}
and Matthias L\"owe\footnote{Universit\"at M\"unster, Fachbereich Mathematik und Informatik, Institut f\"ur Mathematische Statistik,
Einsteinstr. 62, D-48149 M\"unster, Germany {\tt maloewe@math.uni-muenster.de}}}
\vspace{2 cm}


\vspace{1cm}

\begin{quote}
{\small {\bf Abstract:} We obtain rates of convergence in limit theorems of partial
sums $S_n$ for certain sequences of dependent, identically distributed random variables, which arise
naturally in statistical mechanics, in particular, in the context of the Curie-Weiss models. Under appropriate assumptions
there exists a real number $\alpha$, a positive real number $\mu$, and a positive integer $k$ such that
$(S_n- n \alpha)/n^{1 - 1/2k}$ converges weakly to a random variable with density proportional to $\exp(  -\mu |x|^{2k} /(2k)! )$.
We develop Stein's method for exchangeable pairs for a rich class of distributional
approximations including the Gaussian distributions as well as the  non-Gaussian limit distributions with density
proportional to  $\exp(  -\mu |x|^{2k} /(2k)! )$. Our results include the optimal Berry-Esseen rate in the
Central Limit Theorem for the total magnetization in the classical Curie-Weiss model, for high temperatures as well as at the critical
temperature $\beta_c=1$, where the Central Limit Theorem fails. Moreover, we analyze Berry-Esseen bounds as the temperature $1/ \beta_n$
converges to one and obtain a threshold for the speed of this convergence. Single spin distributions satisfying
the Griffiths-Hurst-Sherman (GHS) inequality like models of liquid helium or continuous Curie-Weiss models are
considered. 
}
\end{quote}

\vfill

\bigskip\noindent
{\it MSC 2000:} Primary 60F05, secondary 60G09, 60K35, 82B20, 82D40

\medskip\noindent
{\it Keywords and phrases.} Berry-Esseen bound, Stein's method, exchangeable pairs, Curie-Weiss models, critical temperature, GHS-inequality

\medskip\noindent
This research was  done partly at the Mathematisches Forschungsinstitut Oberwolfach during a stay within the Research in Pairs
Programme from August 24 - September 6, 2008.
\eject

\setcounter{section}{0}

\section{Introduction and main result}\label{intro}

There is a long tradition in considering mean--field models in statistical
mechanics. The Curie--Weiss model is famous, since it exhibits a number of
properties of real substances, such as multiple phases, metastable states and others,
explicitly. The aim of this paper
is to prove Berry-Esseen bounds for the sums of dependent random variables
occurring in statistical mechanics under the name Curie-Weiss models. To this end, we
will develop Stein's method for exchangeable pairs  (see \cite{Stein:1986})
for a rich class of distributional approximations.
For an overview
of results on the Curie--Weiss models and related models, see \cite{Ellis:LargeDeviations},
\cite{Ellis/Newman:1978}, \cite{Ellis/Newman/Rosen:1980}.

For a fixed positive integer $d$ and a finite subset $\Lambda$ of $\Z^d$, a ferromagnetic
crystal is described by random variables $X_i^{\Lambda}$ which represent the spins of the atom
at sites $i \in \Lambda$, where $\Lambda$ describes the macroscopic shape of the crystal.
In {\it Curie--Weiss} models, the joint distribution at fixed temperature $T >0$
of the spin random variables is given by

\begin{equation} \label{CWmeasure}
P_{\Lambda, \beta}((x_i)):=P_{\Lambda, \beta} \bigl( (X_i^{\Lambda})_{i \in \Lambda} = (x_i)_{i \in \Lambda} \bigr)
:=  \frac{1}{Z_{\Lambda}(\beta)} \exp
\biggl( \frac{\beta}{2 |\Lambda|} \bigl( \sum_{i \in \Lambda} x_i \bigr)^2
\biggr) \prod_{i \in \Lambda} \, d \varrho(x_i).
\end{equation}

Here $\beta := T^{-1}$ is the inverse temperature and $Z_{\Lambda}(\beta)$ is a normalizing
constant known as the partition function and $|\Lambda|$ denotes the cardinality of $\Lambda$.
Moreover $\varrho$ is the distribution of a single spin in the limit $\beta \to 0$.
We define $S_{\Lambda}=\sum_{i \in \Lambda} X_i^{\Lambda}$, the {\it total magnetization}
inside $\Lambda$. We take without loss of generality $d=1$ and $\Lambda = \{1, \ldots, n \}$,
where $n$ is a positive integer. We write $n$, $X_i^{(n)}$, $P_{n,\beta}$ and $S_n$, respectively, instead of $|\Lambda|$, $X_i^{\Lambda}$, $P_{\Lambda,\beta}$, and
$S_\Lambda$, respectively.
In the case where $\beta$ is fixed we may even sometimes simply write $P_n$.

We assume that $\varrho$ is in the class $\mathcal{B}$ of non-degenerate symmetric Borel probability measures on
$\R$
which satisfy
\begin{equation} \label{condrho}
\int \exp \biggl( \frac{b \, x^2}{2} \biggr) \, d \varrho (x) < \infty \quad \text{for all} \quad b>0.
\end{equation}

In the classical Curie--Weiss model, spins are distributed in  $\{-1, +1\}$
according to $\varrho = \frac 12 (\delta_{-1} + \delta_1)$.
More generally, the Curie--Weiss model carries an
additional parameter $h >0$ called {\it external magnetic field} which leads
to the modified measure, given by
$$
P_{n,\beta,h} (x) = \frac{1}{Z_{n, \beta,h}} \exp \bigl( \frac{\beta}{2n} S_n^2 +
\beta \, h S_n  \bigr) \, d \varrho^{\otimes n} (x), \quad x=(x_i).
$$
The measures $P_{n, \beta, h}$ is completely determined by the value of the total magnetization. It is therefore called an {\it order parameter}
and its behaviour will be studied in this paper.
The non-negative external magnetic field strength may even depend on the site:
\begin{equation} \label{zn}
P_{n, \beta, h_1, \ldots, h_n}(x) = \frac{1}{Z_{n , \beta, h_1, \ldots, h_n}}
\exp \bigl( \frac{\beta}{2n} S_n^2 +
\beta \, \sum_{i=1}^n h_i \, x_i  \bigr) \, d \varrho^{\otimes n} (x), \quad x=(x_i).
\end{equation}

In the general case \eqref{CWmeasure}, we will see (analogously to the treatment in
\cite{Ellis/Newman:1978, Ellis/Newman/Rosen:1980}) that the
asymptotic behaviour of $S_n$ depends crucially on the extremal
points of a function $G$ (which is a transform of the rate function in a corresponding
large deviation principle): define
$$
\phi_{\varrho}(s) := \log \int \exp (s \, x) \, d \varrho(x)
$$
and
\begin{equation} \label{Gdef}
G_{\varrho}(\beta,s) := \frac{\beta \, s^2}{2} - \phi_{\varrho}(\beta \, s).
\end{equation}
We shall drop $\beta$ in the notation for $G$ whenever there is no danger
of confusion, similarly we will suppress $\varrho$ in the notation for $\phi$ and $G$.
For any measure $\varrho \in \mathcal{B}$, $G$ was proved to have global minima, which can be
only finite in number, see \cite[Lemma 3.1]{Ellis/Newman:1978}. Define $C = C_{\varrho}$ to be
the discrete, non--empty set of
minima (local or global)
of $G$. If $\alpha \in C$,
then there exists a positive integer $k:=k(\alpha)$ and a positive real number $\mu:= \mu(\alpha)$
such that
\begin{equation}\label{Taylor}
G(s) = G(\alpha) + \frac{\mu(\alpha) (s-\alpha)^{2k}}{(2k)!} + {\mathcal O}((s-\alpha)^{2k+1} ) \quad \text{as}
\quad s \to \alpha.
\end{equation}
The numbers $k$ and $\mu$ are called the {\it type} and {\it strength}, respectively, of the extremal
point $\alpha$. Moreover, we define the maximal type $k^*$ of $G$ by the formula
$$
k^* = \max \{ k(\alpha); \alpha \text{ is a global minimum of } G \}.
$$
Note that the $\mu(\alpha)$ can be calculated explicitly: one gets
\begin{equation}\label{k=1}
\mu(\alpha) = \beta- \b^2\phi''(\beta \, \alpha)\qquad \mbox{if  }k=1
\end{equation}
while
\begin{equation}\label{kriesig}
\mu(\alpha) = - \beta^{2k} \phi^{(2k)}(\beta \, \alpha) \qquad \mbox{if  }k\ge 2
\end{equation}
(see \cite{Ellis/Newman/Rosen:1980}).

An interesting point is, that the global minima
of $G$ of maximal type correspond to stable states, meaning that multiple minima
represent a mixed phase and a unique global minimum a pure phase. For details see
the discussions in  \cite{Ellis/Newman/Rosen:1980}.

The following is known about the fluctuation behaviour of $S_n$ under $P_n$.
In the classical model ($\varrho$ is the symmetric Bernoulli measure), for
$0 < \beta < 1$, in \cite{Ellis/Newman:1978} the
Central Limit Theorem is proved:
$$
\frac{\sum_{i=1}^n X_i}{\sqrt{n}} \to N(0, \sigma^2(\beta))
$$
in distribution with respect to the Curie--Weiss finite volume Gibbs states
with $\sigma^2(\beta) = (1-\beta)^{-1}$.
Since for $\beta = 1$ the variance $\sigma^2(\beta)$ diverges, the
Central Limit Theorem fails at the critical point. In \cite{Ellis/Newman:1978} it is proved that
for $\beta = 1$ there exists a random variable $X$ with probability density proportional to $\exp(- \frac{1}{12} x^4)$
such that as $n \to \infty$
$$
\frac{\sum_{i=1}^n X_i}{n^{3/4}} \to X
$$
in distribution with respect to the finite-volume Gibbs states. Asymptotic
independence properties and propagation of chaos for blocks of size $o(n)$ have been
investigated in \cite{BenArous/Zeitouni:1999}.

In general, given $\varrho \in \mathcal{B}$, let $\alpha$ be one of the global minima of maximal type $k$ and strength
$\mu$ of $G_{\varrho}$. Then
$$
\frac{S_n - n \alpha}{n^{1 - 1/2k}} \to X_{k, \mu, \beta}
$$
in distribution, where $X_{k, \mu, \beta}$ is a random variable with probability density $f_{k, \mu, \beta}$, defined by
\begin{equation} \label{densitysigma}
f_{1, \mu, \beta}(x) = \frac 1 {\sqrt{2 \pi \sigma^2} }\exp \bigl( -x^2 / 2 \sigma^2 \bigr)
\end{equation}
and for $k \geq 2$
\begin{equation} \label{densitygen}
f_{k, \mu, \beta}(x) = \frac{\exp \bigl( - \mu x^{2k} / (2 k)!  \bigr)}{ \int \exp \bigl( - \mu x^{2k} / (2k)! \bigr) \, dx}.
\end{equation}
Here, $\sigma^2 = \frac 1 \mu - \frac 1\beta$ so that for $\mu = \mu(\alpha)$ as in \eqref{k=1}, $\sigma^2 = ([ \phi''(\beta \alpha)]^{-1} - \beta)^{-1}$
(see \cite{Ellis/Newman:1978}, \cite{Ellis/Newman/Rosen:1980}). Moderate deviation principles have been investigated in \cite{Eichelsbacher/Loewe:2004}.
\medskip

In \cite{Ellis/Monroe/Newman:1976} and \cite{Ellis/Newman/Rosen:1980}, a class of measures $\varrho$ is described exhibiting a behaviour
similar to that of the classical Curie--Weiss model. Assume that $\varrho$ is any symmetric measure that satisfies the
Griffiths-Hurst-Sherman (GHS) inequality,
\begin{equation}\label{GHS}
\frac{d^3}{ds^3} \phi_\varrho(s) \le 0 \quad \mbox{for all } s\ge 0,
\end{equation}
(see also \cite{Ellis/Newman:1978b, Griffiths:1970}). One can show that in this case
$G$ has the following properties:
There exists a value $\beta_c$, the inverse critical temperature,
and $G$ has a unique
global minimum at the origin for $0 < \beta \leq \beta_c$ and exactly two global minima, of equal type,
for $\beta >\beta_c$. For $\beta_c$ the unique global minimum is of type $k \geq 2$ whereas
for $\beta \in (0,\beta_c)$ the unique global minimum is of type 1. At $\beta_c$ the law
of large numbers still holds, but the fluctuations of $S_n$ live on a smaller scale than
$\sqrt n$. This critical temperature can be explicitly computed as $\beta_c= 1 / \phi''(0) = 1 / \operatorname{Var}_{\varrho}(X_1)$.
By rescaling the $X_i$ we may thus assume that $\beta_c=1$.

Alternatively, the GHS-inequality can be formulated in the terms of $Z_{n,\beta, h_1, \ldots, h_n}$, defined in \eqref{zn}:
\begin{eqnarray} \label{GHS2}
0 & \geq & \frac{\partial^3}{\partial h_i \, \partial h_j \partial h_k} \log Z_{n, \beta, h_1, \ldots, h_n} \nonumber \\
& = & E(X_i X_j X_k) - \E(X_i) \E(X_j X_k) - \E(X_j) \E(X_i X_k) \\ &  & \hspace{0.5cm} - \E(X_k) \E(X_i X_j) + 2 \E(X_i) \E(X_j) \E(X_k) \nonumber
\end{eqnarray}
for all (not necessarily distinct) sites $i,j,k \in \{1, \ldots, n\}$. Here $\E$ denotes the expectation with respect
to $P_{n, \beta, h_1, \ldots, h_n}$. The GHS inequality has a number of interesting implications, see
\cite{Ellis/Monroe/Newman:1976}.

With GHS, we will denote the set of measures $\varrho \in \mathcal{B}$ such that the GHS-inequality \eqref{GHS}
is valid (for $P_{n, \beta, h_1, \ldots, h_n}$ in the sense of \eqref{GHS2}). We will give examples in Section 7.
\medskip

\begin{remark}
In \cite[Lemma 4.1]{Ellis/Newman:1978}, for $\varrho \in \mathcal{B}$ it is proved that
$G$ has a unique global minimum if and only if
$$
\int \exp (s \, x) \, d\varrho(x) < \exp(s^2/2), \quad \text{for} \,\, s \,\, \text{real},
$$
where the right hand side of this strict inequality is the moment generating function of a standard
normal random variable. Moreover, in the same Lemma it is proved that $G$ has a local minimum at the origin
of type $k$ and strength $\mu$ if and only if
$$
\bar{\mu}_j - \mu_j(\varrho) = \left\{ \begin{array}{ll} 0 & \mbox{for } j=0,1,\ldots,2k-1, \\
\mu>0 & \mbox{for } j=2k. \\ \end{array} \right.
$$
Here $\mu_j(\varrho)$ and $\bar{\mu}_j$ define the $j$'th moment of $\varrho$ and the $j$'th
moment of a standard normal random variables, respectively.
Note that this in particular implies $\mu_1(\varrho)=\mathbb{E}_\varrho(X_1)=0$.
\end{remark}
\medskip

The aim of this paper is to prove the following theorems:

\subsection{Results for the classical Curie-Weiss model}

\begin{theorem}[classical Curie-Weiss model, optimal Berry-Esseen bounds outside the critical temperature]\label{CW}
Let $\varrho = \frac 12 \delta_{-1} + \frac 12 \delta_1$ and $0 < \beta <1$. We have
\begin{equation} \label{Kol}
\sup_{z \in \R} \bigg| P_n \biggl( S_n/ \sqrt{n} \leq z \biggr) - \Phi_{\beta}(z) \bigg| \leq C \, n^{-1/2},
\end{equation}
where $\Phi_{\beta}$ denotes the distribution function of the normal distribution with expectation zero and variance $(1- \beta)^{-1}$, and
$C$ is an absolute constant, depending on $\beta$, only.
\end{theorem}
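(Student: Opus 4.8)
The plan is to apply an abstract Stein's-method-for-exchangeable-pairs theorem (which must appear later in the paper as the main technical tool for normal approximation) to the pair $(S_n/\sqrt n, S_n'/\sqrt n)$ obtained by the natural Glauber dynamics: starting from a configuration $(x_i)$ distributed according to $P_{n,\beta}$, pick a coordinate $I$ uniformly at random and resample $x_I$ from its conditional distribution given the others. This produces an exchangeable pair $(W, W')$ with $W = S_n/\sqrt n$, and the key identity one exploits is that $\E[W - W' \mid \text{configuration}]$ is, up to an explicit linear factor $\lambda$ and a small error, proportional to $W$ itself; this is precisely the linearity-regression condition $\E[W'-W\mid \mathcal F] = -\lambda W + R$ that drives the method. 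For the classical Curie--Weiss model the single-spin conditional law is explicit (spins in $\{-1,+1\}$ with tilting determined by the local field $\tfrac{\beta}{n}(S_n - x_I)$), so one can compute $\E[x_I' \mid (x_j)_{j\ne I}] = \tanh\!\big(\tfrac{\beta}{n}(S_n-x_I)\big)$ and Taylor-expand: $\tanh(\tfrac{\beta}{n}S_n + O(1/n)) = \tfrac{\beta}{n}S_n + O((S_n/n)^3) + O(1/n)$, which yields $\lambda = \tfrac{1}{n}(1-\beta) + O(1/n^2)$ after averaging over $I$, so $\lambda^{-1}\asymp n$ as needed, and the correct limiting variance $(1-\beta)^{-1}$ emerges.

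The steps, in order: (1) set up the exchangeable pair via single-site resampling and verify exchangeability from the symmetry of $P_{n,\beta}$; (2) compute $\E[W-W'\mid W]$ exactly using the explicit $\tanh$ form, Taylor-expand to extract the leading linear term $-\lambda W$ and bound the remainder $R$ in $L^1$ (or $L^2$) by $O(\lambda\, n^{-1/2})$ using the concentration of $S_n/n$ near the unique minimizer $0$ of $G$ — here one invokes the known fluctuation result that $S_n/\sqrt n$ is tight (density $f_{1,\mu,\beta}$), so that $\E|S_n/n|^3 = O(n^{-3/2})$; (3) bound the conditional variance term $\E\big|\,1 - \tfrac{1}{2\lambda}\E[(W-W')^2\mid\mathcal F]\,\big|$, again using tightness of $W$ and the fact that $(W-W')^2 = x_I'^2 - \ldots$ involves only $O(1)$-bounded quantities divided by $n$; (4) bound the third-moment-type term $\tfrac{1}{\lambda}\E|W-W'|^3$, which is $O(\lambda^{-1} n^{-3/2}) = O(n^{-1/2})$ since $|W-W'|\le 2/\sqrt n$; (5) assemble these into the abstract bound to get the $O(n^{-1/2})$ Kolmogorov rate against $\Phi_\beta$.

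The main obstacle is step (2)--(3): controlling the remainder terms at the right order requires genuine quantitative control on $\E|S_n/n|^j$ for $j=2,3$, i.e.\ a concentration estimate for the magnetization, not merely the qualitative CLT; in the subcritical regime $\beta<1$ this follows from a Hubbard--Stratonovich/Laplace-type analysis of the density of $S_n/\sqrt n$ (the integral $\int \exp(-\tfrac n2 G(s/\sqrt n + \alpha))\,ds$ localizes at the nondegenerate minimum), giving $\E(S_n/\sqrt n)^2 \to (1-\beta)^{-1}$ with an $O(n^{-1/2})$ correction, and one must track these corrections carefully so they do not degrade the final rate. A secondary subtlety is that the abstract exchangeable-pairs bound typically produces an error term of the form $\E|R|/\lambda$; verifying that the ``bias'' $R$ is genuinely $O(\lambda n^{-1/2})$ and not merely $O(\lambda n^{-1/2}\log n)$ is what pins down the \emph{optimal} (logarithm-free) rate claimed, and this is where the non-degeneracy $\mu = \beta - \beta^2\phi''(0) = \beta(1-\beta) > 0$ of the minimum is used essentially.
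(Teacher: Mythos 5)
Your proposal is correct and follows essentially the same route as the paper: single-site Gibbs resampling to build the exchangeable pair, the explicit $\tanh$ form of the conditional mean, Taylor expansion to isolate the linear term $\frac{1-\beta}{n}W$ with $\lambda \asymp 1/n$, moment bounds on $W=S_n/\sqrt n$ via the Hubbard--Stratonovich/Laplace analysis (the paper's Lemma~\ref{momentsgeneral}), and then plugging into the abstract exchangeable-pairs bound for $N(0,\sigma^2)$ approximation (the paper's Corollary~\ref{corsigma}). The only cosmetic differences are that the paper writes the regression identity as $\E[W-W'\mid W]=\frac{\lambda}{\sigma^2}W+R$ with $\lambda=1/n$, $\sigma^2=(1-\beta)^{-1}$ rather than folding $(1-\beta)$ into $\lambda$, and it works with $\sqrt{\E(R^2)}$ rather than $\E|R|$ in the remainder bound; these are immaterial.
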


\begin{theorem}[classical Curie-Weiss model, optimal Berry-Esseen bounds at the critical temperature]\label{CWcritical}
Let $\varrho = \frac 12 \delta_{-1} + \frac 12 \delta_1$ and $\beta =1$. We have
\begin{equation} \label{Kol2}
\sup_{z \in \R} \bigg| P_n \biggl( S_n/ n^{3/4} \leq z \biggr) - F(z) \bigg| \leq C \, n^{-1/2},
\end{equation}
where
\begin{equation} \label{density2}
F(z) := \frac{1}{Z} \int_{- \infty}^z \exp(-x^4/12) \, dx,
\end{equation}
$Z:= \int_{\R}   \exp(-x^4/12)\, dx$ and
$C$ is an absolute constant.
\end{theorem}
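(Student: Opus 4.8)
The plan is to deduce the statement from the abstract Berry--Esseen theorem for exchangeable pairs developed in this paper, applied to the target law $F$ of \eqref{density2}. Its density is $p(x)=Z^{-1}\exp(-x^4/12)$, with $-(\log p)'(x)=x^3/3$, so the relevant Stein operator is
\begin{equation*}
(\mathcal{A}f)(x)=f'(x)-\tfrac13\,x^3 f(x),
\end{equation*}
which is the instance of the general framework with $k=2$, $\mu=2$: for $\varrho=\tfrac12\delta_{-1}+\tfrac12\delta_1$ and $\beta=1$ one has $\phi(s)=\log\cosh s$, hence $\phi''(0)=1$, $\phi'''(0)=0$, $\phi^{(4)}(0)=-2$, so \eqref{kriesig} gives $\mu=-\phi^{(4)}(0)=2$ and $\mu x^{2k}/(2k)!=x^4/12$ as in \eqref{densitygen}. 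Set $W:=S_n/n^{3/4}$ and build the partner $W'$ in the usual way: choose $I$ uniformly in $\{1,\dots,n\}$, independently of the spins, replace $X_I$ by a fresh spin $X_I'$ drawn from the conditional Curie--Weiss law given $(X_j)_{j\neq I}$, and put $W'=(S_n-X_I+X_I')/n^{3/4}$. Writing $\mathcal{F}=\sigma(X_1,\dots,X_n)$ and $m_i=S_n-X_i$, a direct computation from \eqref{CWmeasure} at $\beta=1$ gives
\begin{equation*}
\E[X_I'\mid\mathcal{F},I]=\tanh(m_I/n),\qquad
\E\big[(X_I'-X_I)^2\mid\mathcal{F},I\big]=2\big(1-X_I\tanh(m_I/n)\big).
\end{equation*}

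Next I verify the two structural hypotheses of the abstract theorem. The crucial point is that $S_n/n=W\,n^{-1/4}\to 0$: writing $t:=S_n/n$ and Taylor expanding $\tanh(m_i/n)=\tanh(t-X_i/n)$ about $t$ (using $X_i^2=1$, $\sum_iX_i=S_n$), averaging over $I$, and substituting $\tanh t=t-t^3/3+O(t^5)$ and $t\tanh t=t^2+O(t^4)$, one obtains
\begin{equation*}
\E[W-W'\mid\mathcal{F}]=\lambda\Big(\tfrac13 W^3+R_1\Big),\qquad
\E\big[(W-W')^2\mid\mathcal{F}\big]=2\lambda+R_2,\qquad \lambda:=n^{-3/2},
\end{equation*}
with $|R_1|\le C\,n^{-1/2}(|W|+|W|^5)$ and $|R_2|\le C\,n^{-1/2}\lambda\,W^2$ up to terms of strictly smaller order, and with $|W-W'|\le 2\,n^{-3/4}$ deterministically. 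Thus the ``linearity'' condition holds with drift $\tfrac13 W^3=-(\log p)'(W)$, and the conditional second moment equals $2\lambda$ up to the stated remainder.

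Feeding these into the abstract theorem, $\sup_z|P_n(W\le z)-F(z)|$ is bounded by three contributions, where $f_z$ solves $\mathcal{A}f_z=\1_{(-\infty,z]}-F(z)$. First, the conditional-variance term $\tfrac{1}{2\lambda}\,\E\big|2\lambda-\E[(W-W')^2\mid\mathcal{F}]\big|\cdot\|f_z'\|_\infty\le C\,n^{-1/2}\,\E[W^2]$. Second, the remainder term $\E\big[|R_1|\,|f_z(W)|\big]\le C\,n^{-1/2}\,\E[|W|+|W|^5]$. Third, the Taylor/jump term coming from $\int_0^{W'-W}(f_z'(W+t)-f_z'(W))\,dt$, controlled using $|W-W'|\le 2n^{-3/4}$, the bounds on $f_z$, $f_z'$ and $x\mapsto x^3 f_z(x)$ for this target (part of the general solution estimates, available because $\exp(-x^4/12)$ has lighter-than-Gaussian tails), and a concentration inequality $P_n(|W-z|\le 2n^{-3/4})\le C\,n^{-3/4}$ proved inside the same framework; this contribution is only $O(n^{-3/4})$. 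All three are therefore $O(n^{-1/2})$, and since the first is genuinely of that order, the rate cannot be improved, matching \eqref{Kol2}.

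The only input that is not a routine expansion---and the real obstacle---is the uniform moment bound $\sup_n\E_{P_n}[|W|^m]<\infty$ for each $m$ (equivalently $\sup_n\E_{P_n}[\exp(cW^4)]<\infty$ for small $c>0$), which is what closes the first two contributions above. I would obtain it from the Hubbard--Stratonovich representation: marginalizing the quadratic interaction $\exp(S_n^2/2n)$ against a Gaussian introduces an auxiliary field whose marginal law is proportional to $\exp(-nG(u))$, hence concentrates on scale $n^{-1/4}$ with limiting density $\propto\exp(-v^4/12)$, which has all moments; conditionally on this field $S_n$ is a sum of i.i.d.\ $\{-1,+1\}$ spins, so the moments of $W$ are controlled uniformly in $n$. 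This, together with the abstract solution estimates for $\mathcal{A}$, is the only place where the specific quartic density enters the argument.
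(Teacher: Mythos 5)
Your proposal matches the paper's own proof essentially step for step: the same Gibbs-sampling exchangeable pair with $W=S_n/n^{3/4}$, the same Taylor expansion of $\tanh$ leading to $\E[W-W'\mid\mathcal F]=-\lambda\psi(W)+R(W)$ with $\lambda=n^{-3/2}$ and $\psi(x)=-x^3/3$, the same appeal to an abstract exchangeable-pair Berry--Esseen bound for the quartic target (the paper's Theorem~\ref{generaldensity}(2), backed by the solution estimates of Lemma~\ref{genbound}), and the same Hubbard--Stratonovich argument for uniform moment bounds (Lemma~\ref{momentsgeneral}). The decomposition into a conditional-variance term and a remainder term both of order $n^{-1/2}$, plus Taylor/concentration contributions of smaller order $n^{-3/4}$, is precisely the paper's accounting.
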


\begin{theorem}[Berry-Esseen bounds for size-dependent temperatures]\label{CWbetadep}
Let $\varrho = \frac 12 \delta_{-1} + \frac 12 \delta_1$ and $0 < \beta_n < \infty$ depend on $n$ in such a way that $\beta_n \to 1$
monotonically as $n \to \infty$. Then the following assertions hold:
\begin{enumerate}
\item
If $\beta_n -1 = \frac{\gamma}{\sqrt{n}}$ for some $\gamma \not= 0$, we have
\begin{equation} \label{Kol3}
\sup_{z \in \R} \bigg| P_n \biggl( S_n/ n^{3/4} \leq z \biggr) - F_{\gamma}(z) \bigg| \leq C\, n^{-1/2}
\end{equation}
with $$
F_{\gamma}(z) := \frac{1}{Z} \int_{- \infty}^z \exp \bigl( - \frac{x^4}{12} + \frac{\gamma x^2}{2} \bigr) \, dx.
$$
where $Z:= \int_{\R} \exp \bigl( - \frac{x^4}{12} + \frac{\gamma x^2}{2} \bigr)\, dx$ and $C$ is an absolute constant.
\item
If $|\beta_n -1| \ll n^{-1/2}$, $S_n/ n^{3/4}$ converges in distribution to $F$, given in \eqref{density2}. Moreover, if $|\beta_n -1| = \mathcal{O}(n^{-1})$,
\eqref{Kol2} holds true.
\item
If $|\beta_n -1| \gg n^{-1/2}$, the Kolmogorov distance of the distribution of $\sqrt{\frac{1-\beta_n}{n}} \sum_{i=1}^n X_i$ and
the normal distribution $N(0, (1-\beta_n)^{-1})$ converges to zero. Moreover, if $|\beta_n -1| \gg n^{-1/4}$, we obtain
\begin{equation*}
\sup_{z \in \R} \bigg| P_n \biggl( \frac{\sqrt{(1-\beta_n)} S_n}{\sqrt{n}} \leq z \biggr) - \Phi_{\beta_n}(z) \bigg| \leq C \, n^{-1/2}
\end{equation*}
with an absolute constant $C$.
\end{enumerate}
\end{theorem}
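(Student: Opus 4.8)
The plan is to derive everything from a single exchangeable-pair construction together with the abstract Stein bounds developed earlier in the paper. Given a configuration $x=(x_1,\dots,x_n)$ distributed according to $P_{n,\beta_n}$ with $\varrho=\frac12\delta_{-1}+\frac12\delta_1$, form $x'$ by choosing a coordinate $I$ uniformly at random and resampling $x_I$ from its conditional distribution given the other spins; then $(W,W')$ with $W=S_n/n^{3/4}$ (in the critical-window cases) or $W=\sqrt{(1-\beta_n)/n}\,S_n$ (in the high-temperature case) is exchangeable. The conditional law of $x_I$ depends on the others only through $m=\frac1n\sum_{j\ne I}x_j$, so one computes $\E[W-W'\mid x]=\frac1n\bigl(W-n^{1/4}\tanh(\beta_n m+\text{correction})\bigr)$ up to lower-order terms; expanding $\tanh$ around the relevant minimum of $G$ (the origin, since $\beta_n\to1$) and using $\tanh t=t-t^3/3+O(t^5)$ produces, after rescaling, a drift of the form $\E[W-W'\mid x]\approx\lambda\bigl(f(W)+R\bigr)$ where $f(w)=\mu w^{2k-1}/(2k-1)!+\dots$ matches the log-derivative of the target density. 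This is exactly the input required by the general Stein theorem for the densities $f_{k,\mu,\beta}$ established earlier; the three regimes of Theorem~\ref{CWbetadep} correspond to which term dominates in the expansion of $\beta_n m-\tanh(\beta_n m)$.

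Concretely, I would handle the three parts as follows. For part (1), $\beta_n-1=\gamma/\sqrt n$: here $W=S_n/n^{3/4}$ and the cubic term $(\beta_n m)^3/3$ and the linear shift $(\beta_n-1)m$ enter at the \emph{same} order $n^{-3/2}$ after rescaling, yielding a drift matching the log-derivative of $F_\gamma$, i.e.\ $f_\gamma(w)=\frac{w^3}{3}-\gamma w$; plugging the variance and remainder estimates into the abstract Berry--Esseen bound gives the rate $n^{-1/2}$. For part (2), $|\beta_n-1|\ll n^{-1/2}$: the linear shift is of smaller order than the cubic term, so the limiting drift is just $w^3/3$, giving convergence to $F$; when moreover $|\beta_n-1|=O(n^{-1})$ the shift contributes only $O(n^{-1})$ to the Stein remainder, which is dominated by the $n^{-1/2}$ already present, so the clean bound \eqref{Kol2} survives. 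For part (3), $|\beta_n-1|\gg n^{-1/2}$: now the linear term dominates, $\sigma_n^2=(1-\beta_n)^{-1}$ is the right normalization, $W=\sqrt{(1-\beta_n)/n}\,S_n$, the drift is $\approx\lambda W$ with $\lambda\asymp(1-\beta_n)/n$, and one invokes the Gaussian ($k=1$) case of the abstract theorem. The threshold $n^{-1/4}$ appears because the cubic (non-Gaussian) correction to the drift, relative to the linear part, is of size $(1-\beta_n)^{-2}n^{-1}\cdot(\text{moments of }W)$, and controlling it inside the Stein remainder at rate $n^{-1/2}$ requires $(1-\beta_n)^{-2}n^{-1}\lesssim n^{-1/2}$, i.e.\ $|1-\beta_n|\gg n^{-1/4}$; for $n^{-1/2}\ll|1-\beta_n|\ll n^{-1/4}$ one still gets convergence to zero of the Kolmogorov distance but without the explicit rate.

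The technical heart in each case is the same: showing that the \emph{quadratic} Stein quantity, $\operatorname{Var}\bigl(\E[(W-W')^2\mid x]\bigr)$, and the third-moment term $\E|W-W'|^3$ are small, and that the error $R$ in approximating $\E[W-W'\mid x]$ by $\lambda f(W)$ has small expectation. Because a single resampling changes $S_n$ by $O(1)$, $|W-W'|=O(n^{-1/4})$ deterministically in the critical cases (resp.\ $O(\sqrt{(1-\beta_n)/n})$ in the high-temperature case), so the third-moment contribution is automatically of the correct order; the variance of the conditional second moment requires a concentration estimate for $m=\frac1n\sum_j x_j$, which in turn follows from the known law of large numbers / the fact that under $P_{n,\beta_n}$ the empirical mean concentrates at the minimum of $G$ at the appropriate scale (this is where one uses that $\beta_n\to1$ and the Taylor expansion \eqref{Taylor} with $k=2$, $\mu$ the strength of the origin). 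I expect the main obstacle to be the \textbf{uniform control of these remainder terms across the whole window $\beta_n\to1$}: the constants in the fluctuation estimates for $S_n$ degenerate as $\beta_n\to1$ in the high-temperature regime (the variance $(1-\beta_n)^{-1}$ blows up), so one must track the $\beta_n$-dependence carefully and exhibit precisely the stated thresholds rather than merely an $n\to\infty$ statement. Once the remainder bounds are in hand with explicit $\beta_n$-dependence, feeding them into the abstract Berry--Esseen theorem of the earlier section yields the claimed rates, and the cases with no rate follow by the same computation with the remainder only shown to vanish.
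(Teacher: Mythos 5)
Your proposal is correct and follows essentially the same route as the paper: the Gibbs-sampler exchangeable pair, Taylor expansion of $\tanh$ around the origin to identify the drift $\psi$, the uniform-in-$\beta_n$ moment bounds (the paper obtains these via Hubbard--Stratonovich, Lemma~\ref{momentsgeneral}), and then the abstract Berry--Esseen bounds (Theorem~\ref{generaldensity}(2) for the quartic/mixed regimes, Corollary~\ref{corsigma} for the Gaussian regime), with the same identification of the thresholds $n^{-1/2}$, $n^{-1}$, and $n^{-1/4}$ coming from tracking the size of the ratio $\E|R|/\lambda$ and the cubic correction $(1-\beta_n)^{-2}n^{-1}$. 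A small numerical slip in your part~(2) discussion (the shift term $\frac{1-\beta_n}{n}W$ is $O(n^{-2})$ for $|\beta_n-1|=O(n^{-1})$, contributing $O(n^{-1/2})$, not $O(n^{-1})$, after dividing by $\lambda\sim n^{-3/2}$) does not affect the conclusion, which matches the paper.
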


\begin{remark}
In \cite{Barbour:1980}, Barbour obtained distributional limit theorems, together with rates of convergence, for the equilibrium
distributions of a variety of one-dimensional Markov population processes. In section 3 he mentioned, that his results
can be interpreted in the framework of \cite{Ellis/Newman:1978}. As far as we understand, his result (3.9) can be interpreted
as the statement \eqref{Kol2}, but with the rate $n^{-1/4}$.
\end{remark}

\begin{remark}
In the first assertion of Theorem \ref{CWbetadep}, our method of proof allows to compare the distribution of $S_n/n^{3/4}$
alternatively with the distribution with Lebesgue-density proportional to
$$
\exp \bigl( - \frac{\beta_n^3 x^4}{12} + \frac{\gamma \, x^2}{2} \bigr).
$$
To be able to compare the distribution of interest with a distribution depending on $n$ (on $\beta_n$), is one of the
advantages of Stein's method. The proof of this statement follows immediately from the proof of Theorem \ref{CWbetadep}.

If in Theorem \ref{CWbetadep} (2) $|\beta_n-1| \gg n^{-1}$ the speed of convergence reduces to $\mathcal{O}(\sqrt n |1-\beta_n|)$.
Likewise, if in Theorem \ref{CWbetadep} (3)
$|\beta_n-1| \ll n^{-1/4}$, the speed of convergence is $\mathcal{O}(\frac 1 {n |1-\beta_n|})$. This reduced speed of
convergence reflects the influence of two potential limiting measures. Next to the ''true'' limit there is also the limit
measure from part (1) of Theorem \ref{CWbetadep}, which in these cases is relatively close to our measures of interest.
\end{remark}
\medskip

\subsection{Results for a general class of Curie-Weiss models}

More generally, we obtain Berry-Esseen bounds for sums of dependent random variables occurring in the general
Curie-Weiss models. We will be able to obtain Berry-Esseen-type results for $\varrho$-a.s. bounded single-spin
variables $X_i$:

\begin{theorem} \label{CWgeneral}
Given $\varrho \in \mathcal{B}$ in GHS, let $\alpha$ be the global minimum of type $k$ and strength $\mu$ of $G_{\varrho}$.
Assume that the single-spin random variables $X_i$ are bounded $\varrho$-a.s. In the case $k=1$ we obtain
\begin{equation} \label{auchwas}
\sup_{z \in \R} \biggl| P_n \biggl( \frac{S_n}{\sqrt{n}} \leq z \biggr) - \Phi_{W}(z) \biggr| \leq C n^{-1/2},
\end{equation}
where $W:= S_n/\sqrt{n}$ and $\Phi_{W}$ denotes the distribution function of the normal distribution with mean zero and variance $\E(W^2)$
and $C$ is an absolute constant depending on $0 < \beta < 1$.
For $k \geq 2$ we obtain
\begin{equation} \label{Kol4}
\sup_{z \in \R} \bigg| P_n \biggl( \frac{S_n - n \alpha}{n^{1 - 1/2k}}  \leq z \biggr) - \widehat{F}_{W,k}(z) \bigg| \leq C_k \, n^{-1/k}
\end{equation}
where $\widehat{F}_{W,k}(z) := \int_{- \infty}^z \widehat{f}_{W,k}(x) \, dx$ with $\widehat{f}_{W,k}$ defined
by
$$
\widehat{f}_{W,k}(x) := \frac{\exp \bigl( -\frac{x^{2k}}{2k\, \E(W^{2k})} \bigr)}{ \int \exp \bigl( -\frac{x^{2k}}{2k\, \E(W^{2k})} \bigr) \, dx}
$$
with $W := \frac{S_n - n \alpha}{n^{1- 1/2k}}$
and $C_k$ is an absolute constant.
\end{theorem}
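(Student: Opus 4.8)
\noindent\emph{Plan of proof.} The plan is to apply the abstract exchangeable-pair comparison theorem that we establish in the following sections, with target the density $\widehat f_{W,k}$ for $k\ge 2$ (resp.\ the centred Gaussian density with variance $\E(W^2)$ for $k=1$), so that the relevant Stein operator is $\mathcal A f(w)=f'(w)-\frac{w^{2k-1}}{\E(W^{2k})}f(w)$ (resp.\ $f'(w)-\frac{w}{\E(W^2)}f(w)$). Since $\varrho$ is symmetric, $G$ is even, so in the GHS regime considered here its unique global minimum is the origin; thus $\alpha=0$, and $G'$ together with all ``mean-field'' corrections appearing below are \emph{odd} functions of the empirical magnetization $\bar m:=S_n/n$, which is the source of several cancellations. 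Because $\widehat F_{W,k}$ is itself allowed to depend on $n$ through $W$, we never need convergence of $\E(W^{2k})$; it suffices to know $0<c\le\E(W^{2k})\le C<\infty$ uniformly in $n$, which will follow from the non-degeneracy of $\varrho$ together with the moment bounds mentioned below. I construct the exchangeable pair by one step of Glauber dynamics: draw $I$ uniformly on $\{1,\ldots,n\}$, independent of $X_1,\ldots,X_n$, and replace $X_I$ by a variable $X_I'$ sampled from the $P_n$-conditional law of $X_I$ given $(X_j)_{j\ne I}$, then set $S_n':=S_n-X_I+X_I'$ and $W':=(S_n'-n\alpha)/n^{1-1/2k}$. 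Detailed balance of this dynamics with respect to $P_n$ makes $(W,W')$ exchangeable, and by \eqref{CWmeasure} the conditional law of $X_i$ given $(x_j)_{j\ne i}$ is $\varrho$ exponentially tilted by $\exp\!\bigl(\tfrac{\beta}{n}x_i\sum_{j\ne i}x_j+\tfrac{\beta}{2n}x_i^2\bigr)$.

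The heart of the argument is the conditional drift $\E[\,W-W'\mid X_1,\ldots,X_n\,]=n^{-(1-1/2k)}\cdot\frac1n\sum_{i=1}^n\bigl(X_i-\E[X_i'\mid (X_j)_{j\ne i}]\bigr)$. Because the spins are $\varrho$-a.s.\ bounded, the mean of the tilted single-spin law is $\phi'(\beta\bar m)$ plus an error whose naive $O(1/n)$ size --- stemming from the discrepancy between $\sum_{j\ne i}x_j$ and $S_n$ and from the factor $\exp(\tfrac{\beta}{2n}x_i^2)$ --- collapses, after averaging over $i$ and using the symmetry of $\varrho$, to order $\bar m/n$. Combining this with the identity $\phi'(\beta\bar m)-\bar m=-G'(\bar m)/\beta$, with the Taylor expansion of the odd function $G'$ at the origin which by \eqref{Taylor} reads $G'(\bar m)=\mu\,\bar m^{2k-1}/(2k-1)!+O(\bar m^{2k+1})$, and with the substitution $\bar m=W/n^{1/2k}$, one gets
\[
\E[\,W-W'\mid X_1,\ldots,X_n\,]=\lambda\Bigl(\frac{W^{2k-1}}{\E(W^{2k})}+R\Bigr),\qquad \lambda:=\tfrac12\,\E\bigl[(W'-W)^2\bigr],
\]
with a remainder obeying $|R|\le C\,n^{-1/k}\bigl(1+|W|^{2k+1}\bigr)$. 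That one may take the coefficient of $W^{2k-1}$ to be exactly $\lambda/\E(W^{2k})$, putting the residual mismatch into $R$, is not an extra hypothesis: it is forced by the elementary exchangeable-pair identity $\E[(W-W')W]=\lambda$, which gives $\frac{\mu}{\beta(2k-1)!}\E(W^{2k})=\frac12 n^{2-1/k}\E[(W'-W)^2]+O(n^{-1/k})$. This is precisely what lets the statement be phrased with the self-normalizing constant $\E(W^{2k})$ in place of $\mu$.

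It remains to estimate the conditional second and third moments of the increment and to supply moment bounds. From $W'-W=(X_I'-X_I)/n^{1-1/2k}$ one has $\E[(W'-W)^2\mid X_1,\ldots,X_n]=n^{-(2-1/k)}\cdot\frac1n\sum_{i=1}^n\E[(X_i'-X_i)^2\mid(X_j)_{j\ne i}]$; expanding the tilted second moment and using symmetry (which kills the term linear in $\bar m$) shows this equals $2\lambda$ up to a relative error $O(\bar m^2)=O(W^2n^{-1/k})$, so $\lambda^{-1}\E\bigl|\E[(W'-W)^2\mid W]-2\lambda\bigr|=O(n^{-1/k})$. Boundedness of the spins gives $|W'-W|\le Cn^{-(1-1/2k)}$ pointwise, hence $\lambda^{-1}\E|W'-W|^3=O(n^{-(1-1/2k)})$, which is $O(n^{-1/k})$ for $k\ge2$ and $O(n^{-1/2})$ for $k=1$. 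The uniform moment bounds $\sup_n\E(W^{2m})<\infty$ for all $m$ --- needed here and to bound $\E[R\,f(W)]$ --- follow from the exponential concentration of $S_n/n$ around $\alpha=0$ at scale $n^{1/2k}$, a standard consequence of the large-deviation behaviour of $G$ (cf.\ \cite{Ellis/Newman:1978,Ellis/Newman/Rosen:1980,Eichelsbacher/Loewe:2004}). Feeding these three estimates into the abstract theorem --- whose conclusion bounds the Kolmogorov distance by a constant multiple of $\lambda^{-1}\E|\E[(W'-W)^2\mid W]-2\lambda|+\lambda^{-1}\E|W'-W|^3+\E|R\,f(W)|$, the Stein solution $f$ of $\mathcal A f=\1_{(-\infty,z]}-\widehat F_{W,k}(z)$ having bounded $\|f\|_\infty$ and $\|f'\|_\infty$ and the jump in the right-hand side being handled by the usual smoothing/concentration device --- yields the claimed bound $C_k n^{-1/k}$ for $k\ge 2$ and $Cn^{-1/2}$ for $k=1$. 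I expect the main obstacle to be the drift computation of the second paragraph together with the bookkeeping of error terms of several competing orders in $n$ and $W$: the decisive point is that the symmetry of $\varrho$ upgrades the naive $O(1/n)$ and $O(\bar m^{2k})$ corrections to $O(\bar m/n)$ and $O(\bar m^{2k+1})$, which is exactly what produces the rate $n^{-1/k}$ rather than the weaker $n^{-1/2k}$.
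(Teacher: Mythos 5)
Your overall route matches the paper's: Glauber single-site resampling to build $(W,W')$, the drift $\E[W-W'\mid W]$ expressed through $G'$, and comparison with a limiting density whose parameter is $\E(W^{2k})$ so that no sharp asymptotics of moments of $W$ are needed. The device of choosing $\lambda:=\tfrac12\E[(W'-W)^2]$ and letting the exchangeable-pair identity $\E[(W-W')W]=\lambda$ ``force'' the coefficient $\lambda/\E(W^{2k})$ is a nice variation (the paper instead fixes $\lambda=n^{-(2-1/k)}$ and applies Corollary~\ref{corsigma2} and Theorem~\ref{generaldensity2}).

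However, there is a genuine gap in the estimate of the conditional second moment of the increment, and it is exactly where the GHS hypothesis does real work in the paper. You assert that ``expanding the tilted second moment and using symmetry shows \(\E[(W'-W)^2\mid W]\) equals \(2\lambda\) up to a relative error $O(\bar m^2)$.'' But $\E[(W'-W)^2\mid\mathcal F]$ contains, up to the scaling $n^{-(2-1/k)}$, the term $\frac1n\sum_i X_i^2$, which for general $\varrho$ is \emph{not} a deterministic function of $\bar m$ (only in the $\pm1$ case is $X_i^2\equiv 1$). After conditioning further on $W$ one still needs to control how much $\E[\tfrac1n\sum_i X_i^2\mid W]$ fluctuates as a function of $W$; the Kolmogorov version of the abstract Stein estimate (Theorem~\ref{generaldensity2}) requires $\bigl(\operatorname{Var}\!\bigl(\E[(W-W')^2\mid W]\bigr)\bigr)^{1/2}$, and this is bounded, via Jensen, only if one first bounds $\operatorname{Var}\!\bigl(\tfrac1n\sum_i X_i^2\bigr)$. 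The off-diagonal part $\tfrac1{n^2}\sum_{i\neq j}\operatorname{Cov}(X_i^2,X_j^2)$ has $n^2$ summands and, without further input, could be of order $1$. The paper closes this with the Lebowitz four-point correlation inequality (Remark~\ref{lebo}), valid precisely because $\varrho\in\mathrm{GHS}$: taking $i=k$, $j=l$ yields $\operatorname{Cov}(X_i^2,X_j^2)\le 2\bigl(\E(X_iX_j)\bigr)^2=O(n^{-2/k})$, whence $\operatorname{Var}\!\bigl(\tfrac1n\sum_i X_i^2\bigr)=O(n^{-2/k})$. Your proposal uses GHS only to determine the shape of $G$ near the origin and never invokes this correlation inequality; consequently the decisive variance bound is unproved. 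A secondary (and more cosmetic) mismatch is that the abstract theorem you quote has an $L^1$ term $\lambda^{-1}\E\bigl|\E[(W'-W)^2\mid W]-2\lambda\bigr|$, whereas the Kolmogorov bound one actually has at hand is in $L^2$/variance form; once a pointwise (or variance) bound is established via Lebowitz this distinction disappears, but as written your pointwise ``$O(\bar m^2)$'' claim is exactly the unproved step.
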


\begin{theorem} \label{CWgendep}
Let $\varrho \in \mathcal{B}$ satisfy the GHS-inequality and assume that $\beta_c=1$. Let $\alpha$ be the global minimum
of type $k$ with $k \geq 2$ and strength $\mu_k$ of $G_{\varrho}$
and let the single-spin variable $X_i$ be bounded.
Let $0 < \beta_n < \infty$ depend on $n$ in such a way that $\beta_n \to 1$ monotonically as $n \to \infty$.
Then the following assertions hold true:
\begin{enumerate}
\item
If $\beta_n -1 = \frac{\gamma}{n^{1 - \frac 1k}}$ for some $\gamma \not= 0$, we have
\begin{equation} \label{Kol5}
\sup_{z \in \R} \bigg| P_n \biggl( \frac{S_n- n \alpha}{n^{1 - 1/2k} }\leq z \biggr) - F_{W,k, \gamma}(z) \bigg| \leq C_k\, n^{-1/k}
\end{equation}
with $$
F_{W,k, \gamma}(z) := \frac{1}{Z} \int_{- \infty}^z \exp \biggl( - c_W^{-1} \biggl( \frac{\mu_k}{(2k)!} x^{2k} - \frac{\gamma}{2} x^2 \biggr) \biggr) \, dx.
$$
where $Z:= \int_{\R} \exp \bigl( - c_W^{-1} \bigr( \frac{\mu_k}{(2k)!} x^{2k}  - \frac{\gamma}{2} x^2 \bigr) \bigr)\, dx$, with $W := \frac{S_n - n \alpha}{n^{1-1/2k}}
$,
$$
c_W := \frac{\mu_k}{(2k)!} \E(W^{2k}) - \gamma \E(W^2)
$$
and $C_k$ is an absolute constant.
\item
If $|\beta_n -1| \ll n^{-(1 - 1/k)}$, $\frac{S_n - n \alpha}{n^{1-1/2k}}$ converges in distribution to $\widehat{F}_{W,k}$, defined as in Theorem
\ref{CWgeneral}. Moreover, if $|\beta_n -1| = \mathcal{O}(n^{-1})$,
\eqref{Kol4} holds true.
\item
If $|\beta_n -1| \gg n^{-(1-1/k)}$, the Kolmogorov distance of the distribution of $W:= \sqrt{\frac{1-\beta_n}{n}} \sum_{i=1}^n X_i$ and
the normal distribution $N(0, \E(W^2))$ converges to zero. Moreover, if $|\beta_n -1| \gg n^{-(1/2-1/2k)}$, we obtain
\begin{equation*}
\sup_{z \in \R} \bigg| P_n \biggl( \frac{\sqrt{(1-\beta_n)} S_n}{\sqrt{n}} \leq z \biggr) - \Phi_{W}(z) \bigg| \leq C \, n^{-1/2}
\end{equation*}
with an absolute constant $C$.
\end{enumerate}
\end{theorem}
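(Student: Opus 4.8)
\medskip
\noindent\textbf{Strategy of the proof.}
The plan is to treat all three regimes by the scheme already used for Theorems~\ref{CWbetadep} and~\ref{CWgeneral}: build an exchangeable pair by a single-site Gibbs resampling, read off the Stein drift and the variance constant from the local behaviour of $G_{\varrho}(\beta_n,\cdot)$ at the origin, and then invoke the abstract exchangeable-pairs theorems developed in the previous sections (the non-Gaussian one for (1) and (2), the Gaussian one for (3)). The only genuinely new point compared with the fixed-temperature case is that $G'_{\varrho}(\beta_n,\cdot)$ acquires a linear Taylor coefficient $\beta_n(1-\beta_n)$, and it is the size of this term against $n^{-(1-1/k)}$ that separates the three cases.

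Since $\varrho$ satisfies the GHS-inequality and $\beta_n\le\beta_c=1$, the function $G_{\varrho}(\beta_n,\cdot)$ still has its unique global minimum at the origin, so $\alpha=0$; I take $W=S_n/n^{1-1/2k}$ in (1) and (2) and $W=\sqrt{(1-\beta_n)/n}\,S_n$ in (3). The exchangeable pair is obtained by drawing an index $I$ uniformly from $\{1,\dots,n\}$, independently of the spins, and replacing $X_I$ by an independent draw $X_I^{*}$ from the conditional $P_{n,\beta_n}$-law of $X_I$ given $(X_j)_{j\ne I}$, which has $\varrho$-density proportional to $\exp\!\bigl(\tfrac{\beta_n}{n}(S_n-X_I)\,x+\tfrac{\beta_n}{2n}x^{2}\bigr)$; then $(W,W')$ is exchangeable and $X_I,X_I^{*}$ are conditionally i.i.d.\ given $(X_j)_{j\ne I}$. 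Using that the spins are $\varrho$-a.s.\ bounded (so the $x^{2}$-tilt is $\mathcal O(1/n)$ uniformly, and by the symmetry of $\varrho$ its effect on the conditional single-spin mean is smaller still near criticality), the conditional mean of $X_i^{*}$ is $\phi'_{\varrho}\!\bigl(\tfrac{\beta_n}{n}(S_n-X_i)\bigr)+\mathcal O(n^{-1-1/2k})$ and its conditional variance is $\phi''_{\varrho}\!\bigl(\tfrac{\beta_n}{n}(S_n-X_i)\bigr)+\mathcal O(1/n)$.

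Summing the conditional means over $i$ and using $\phi'_{\varrho}(\beta_n s)=s-\beta_n^{-1}G'_{\varrho}(\beta_n,s)$, one obtains
\[
\E\bigl[\,W-W'\mid (X_i)_i\,\bigr]=\frac{1}{\beta_n\,n^{1-1/2k}}\,G'_{\varrho}\!\Bigl(\beta_n,\tfrac{S_n}{n}\Bigr)+R_1
\]
(with $n^{1-1/2k}$ replaced by $(n/(1-\beta_n))^{1/2}$ in (3)), where $R_1$ is a remainder of smaller order. Taylor-expanding at the origin, $G'_{\varrho}(\beta_n,s)=\beta_n(1-\beta_n)\,s+\tfrac{\mu_k}{(2k-1)!}\,\beta_n^{2k}\,s^{2k-1}+\mathcal O(s^{2k+1})$ — the intermediate powers dropping out because $0$ is a minimum of type $k$, the coefficient $\tfrac{\mu_k}{(2k-1)!}$ being positive. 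Substituting $s=W/n^{1/2k}$ (resp.\ $s=W/\sqrt{n(1-\beta_n)}$) one checks that on the three thresholds the linear term and the $s^{2k-1}$ term are, respectively, of the same order, or the linear term is negligible, or the linear term dominates; this puts the drift into the form $\E[W-W'\mid(X_i)_i]=\lambda\psi(W)+R_1$ with $\lambda=n^{-(2-1/k)}$ and $\psi(x)\to\tfrac{\mu_k}{(2k-1)!}x^{2k-1}-\gamma x$ in (1), $\psi(x)=\tfrac{\mu_k}{(2k-1)!}x^{2k-1}$ in (2), and $\lambda=(1-\beta_n)/n$, $\psi(x)=x$ in (3). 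In parallel, summing the conditional single-spin variances gives $\E[(W-W')^2\mid(X_i)_i]=2\lambda\bigl(c_W+o(1)\bigr)$ with $c_W$ a deterministic constant, the leading part being $\tfrac2n\sum_i\phi''_{\varrho}\!\bigl(\tfrac{\beta_n(S_n-X_i)}{n}\bigr)$, which concentrates by a law-of-large-numbers estimate for the Gibbs average; also $\E|W-W'|^3=\mathcal O(\lambda\,n^{-1/2k})$ by boundedness. Feeding $\lambda$, $\psi$, $R_1$, the variance concentration and the third-moment bound into the abstract exchangeable-pairs theorems yields the asserted Kolmogorov bounds against $\widehat F_{W,k}$, $F_{W,k,\gamma}$, and $N(0,\E(W^2))$ — since Stein's method permits comparison with an $n$-dependent limit there is no further step — and the rates $n^{-1/k}$ in (1),(2) and $n^{-1/2}$ in (3), as well as the refined sub-cases ($|\beta_n-1|=\mathcal O(n^{-1})$; $|\beta_n-1|\gg n^{-(1/2-1/2k)}$), come out exactly from requiring $R_1$, the variance fluctuation, the $\mathcal O(s^{2k+1})$ remainder and the subdominant drift term to be of the corresponding order.

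The step I expect to be the real obstacle is the probabilistic input, not the Stein machinery: one needs (i) moment bounds $\sup_n\E(W^{2m})<\infty$ for every $m$, valid uniformly as $\beta_n\to1$, and (ii) the concentration of the empirical averages $\tfrac1n\sum_i\phi''_{\varrho}\!\bigl(\tfrac{\beta_n(S_n-X_i)}{n}\bigr)$ and $\tfrac1n\sum_iX_i^2$ under $P_{n,\beta_n}$. Neither follows from independence at criticality; both should be obtained from the fact that $G_{\varrho}(\beta_n,\cdot)$ has a unique minimum of type $k$ at $0$ — via an exponential-tilting / large-deviation tail estimate for $S_n$ on the scale $n^{1-1/2k}$, which is exactly where the GHS hypothesis (excluding a competing minimum) is indispensable — together with the GHS correlation inequalities for the concentration. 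Once (i) and (ii) are in hand, what remains is the Taylor book-keeping sketched above.
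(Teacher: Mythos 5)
Your strategy is essentially the paper's: single-site Gibbs resampling for the exchangeable pair, Taylor expansion of $G'_{\varrho}(\beta_n,\cdot)$ at the origin to extract the Stein drift (with the $n$-dependent linear coefficient $\beta_n(1-\beta_n)$ separating the three regimes), then Theorem~\ref{generaldensity2} for (1)–(2) and Corollary~\ref{corsigma2} for (3). The two ``obstacles'' you flag are resolved exactly as you suspect: the uniform-in-$\beta_n$ moment bound $\sup_n\E|W|^{2m}<\infty$ is Lemma~\ref{momentsgeneral}, proved via the Hubbard--Stratonovich transformation (Lemma~\ref{hubbard}), and the concentration of $\tfrac1n\sum_iX_i^2$ and of the Gibbs averages follows from Lebowitz's GHS correlation inequality $\mathrm{Cov}(X_i^2,X_j^2)\leq 2(\E(X_iX_j))^2$, which the paper invokes inside the proof of Theorem~\ref{CWgeneral} and then transports verbatim to Theorem~\ref{CWgendep}. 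So no gap — you correctly identified both the Stein scaffolding and the probabilistic inputs; the paper simply has these packaged as Lemma~\ref{CWidentitygeneral} (conditional mean), Lemma~\ref{momentsgeneral} (moments) and the Lebowitz estimate (variance), all set up before the theorem, which is why its own proof can be stated as ``adapt the proof of Theorem~\ref{CWgeneral} step by step''.
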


\begin{remark}
Since the symmetric Bernoulli law is ${\rm GHS}$, Theorems
\ref{CWgeneral} and \ref{CWgendep} include Berry-Esseen type results for this case. But these results differ from
the results in Theorem \ref{CW}, \ref{CWcritical} and \ref{CWbetadep} with respect to the limiting laws: the laws in \ref{CWgeneral}
and \ref{CWgendep} depend on moments of $W$. 
The bounds in Theorems \ref{CW}-\ref{CWbetadep} are easier to obtain; moreover their proofs apply
Corollary \ref{corsigma} and part (2) of Theorem \ref{generaldensity} which are less involved versions of Stein's method
for exchangeable pairs.
\end{remark}

For arbitrary $\varrho \in {\rm GHS}$ we are able to proof good bounds with respect to the Wasserstein-metric. 
For any class of test functions $\mathcal{H}$, a distance on probability measures on $\R$ can be defined by
$$
d_{\mathcal{H}} (P,Q) = \sup_{h \in {\mathcal H}} \bigg| \int h \, dP - \int h \, dQ \bigg|.
$$
The class of test functions $h$ for the Wasserstein distance $d_{w}$ is just the Lipschitz functions ${\rm Lip}(1)$ with constant no greater
than 1. The total variation distance is given by the set ${\mathcal H}$ of indicators of Borel sets, the Kolmogorov
distance $d_{K}$ by the set of indicators of half lines.

Only for technical reasons, we consider now a modified model. Let
$$
\widehat{P}_{n, \beta, h}(x) = \frac{1}{\widehat{Z}_{n, \beta, h}} \exp \biggl( \frac{\beta}{n} \sum_{1 \leq i < j \leq n} x_i x_j + \beta \, h \sum_{i=1}^n x_i \biggr) \, d\varrho^{\otimes n}(x), \,\, x=(x_i).
$$

\begin{theorem} \label{wasserstein}
Given the Curie-Weiss model $\widehat{P}_{n, \beta}$ and $\varrho \in \mathcal{B}$ in GHS,
let $\alpha$ be the global minimum of type $k$ and strength $\mu$ of $G_{\varrho}$. In the case $k=1$, for any
uniformly Lipschitz function $h$ we obtain for $W=S_n/\sqrt{n}$ that
$$
\big| \E \bigl( h(W) \bigr)  - \Phi_W(h) \big| \leq \|h'\| \, C \, \frac{\max \bigl( \E|X_1|^3, \E|X_1'|^3 \bigr)}{\sqrt{n}}.
$$
Here $C$ is a constant depending on $0 < \beta < 1$ and $\Phi_W(h) := \int_{\R} h(z) \Phi_W(dz)$.
The random variable $X_i'$ is drawn from the conditional distribution of the $i$'th coordinate $X_i$ given $(X_j)_{j \not= i}$
(this choice will be explained in Section 3).
For $k \geq 2$ we obtain for any uniformly Lipschitz function $h$ and for $W := \frac{S_n - n \alpha}{n^{1- 1/2k}}$
$$
\big| \E \bigl( h(W) \bigr)  - \widehat{F}_{W,k}(h) \big| \leq \|h'\|  \biggl( C_1 \frac{1}{n^{1/k}} + \frac{C_2 \, \max \bigl( \E|X_1|^3, \E|X_1'|^3 \bigr)}{n^{1-1/2k}}
\biggr).
$$
Here $C_1, C_2$ are constants, and $\widehat{F}_{W,k}(h) := \int_{\R} h(z) \widehat{F}_{W,k}(dz)$.
\end{theorem}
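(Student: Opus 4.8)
The plan is to apply the exchangeable-pairs version of Stein's method (part (2) of Theorem \ref{generaldensity} in the case $k=1$, and the corresponding general-$k$ version) to the random variable $W$, using the natural Gibbs-sampler construction of the exchangeable partner. Concretely, given the configuration $x=(x_1,\dots,x_n)$ under $\widehat P_{n,\beta}$, pick a coordinate $I$ uniformly at random and replace $x_I$ by an independent draw $X_I'$ from the conditional distribution of the $I$-th spin given the others; set $W'$ to be the corresponding value of $(S_n'-n\alpha)/n^{1-1/2k}$ (with $\alpha=0$, $k=1$ in the first case). This pair $(W,W')$ is exchangeable by construction. The first task is to compute $\E[W-W'\mid \mathcal F]$ where $\mathcal F=\sigma(X_1,\dots,X_n)$: here one uses the explicit form of the conditional single-spin law, which under \eqref{zn}-type measures is a tilt of $\varrho$ by $\exp(\beta(\frac{S_n-x_i}{n}+h)\cdot)$ (the $1/n$ rather than $1/(2n)$ being exactly why the modified model $\widehat P$ was introduced), and one expands $\phi_\varrho'$ around the minimizer $\alpha$ of $G$. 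Using \eqref{Taylor} and \eqref{k=1}/\eqref{kriesig}, this yields $\E[W-W'\mid\mathcal F] = \frac{\lambda}{n}\bigl(\Psi(W) + \text{small error}\bigr)$ for an appropriate $\lambda = \lambda(n)$ of order $n^{-(1-1/k)}$ and $\Psi(w)$ the ``drift'' matching the target density (linear $w$ for $k=1$, proportional to $w^{2k-1}$ for general $k$, after the $\E(W^{2k})$-normalization built into $\widehat f_{W,k}$); the reason the target involves $\E(W^{2k})$ rather than the limiting $\mu$ is that we do not replace the exact finite-$n$ conditional quantities by their limits, we only Taylor-expand the single-spin cumulant generating function.

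Next I would feed this into the Stein identity. For the Wasserstein bound one does not need a concentration inequality for the remainder term in the "variance" quantity $\frac12\E[(W-W')^2\mid\mathcal F]$, which is the simplification relative to the Kolmogorov bounds: it suffices to control the $L^1$ size of the various error terms. The standard exchangeable-pairs estimate (as packaged in Theorem \ref{generaldensity}(2)) bounds $|\E h(W)-\widehat F_{W,k}(h)|$ by $\|h'\|$ times a sum of three contributions: (i) a term measuring how far $\frac1{2\lambda}\E[(W-W')^2\mid\mathcal F]$ is from the correct normalizing constant $c_W$ (in $L^1$), (ii) a term $\frac{1}{\lambda}\E|W-W'|^3$ coming from the third-order Taylor remainder in the Stein equation, and (iii) the $L^1$ norm of the remainder in the drift expansion, i.e. the $\mathcal O((W-\alpha)^{2k+1})$ term from \eqref{Taylor} together with the finite-$n$ discretization error. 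Since a single spin flip changes $S_n$ by $X_I-X_I'$, we have $|W-W'| \le (|X_I|+|X_I'|)/n^{1-1/2k}$, so $\frac1\lambda\E|W-W'|^3$ is of order $n^{-(1-1/2k)}\,\max(\E|X_1|^3,\E|X_1'|^3)$ — this is precisely the second term in the asserted bound. Term (i) and the higher-order Taylor piece in (iii), after using boundedness of moments of $W$ (which in the GHS setting follows from the a priori tightness behind the limit theorems, exactly as in \cite{Ellis/Newman:1978}, \cite{Ellis/Newman/Rosen:1980}), contribute the $C_1/n^{1/k}$ term. For $k=1$ the target is Gaussian, $\Psi$ is linear, there is no higher-order Taylor remainder of the type producing $n^{-1/k}$, and one is left only with the $\E|X_1|^3/\sqrt n$ term.

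The main obstacle is the careful bookkeeping in step two: one must verify that the Taylor expansion of the conditional-law cumulant function around $\alpha$, carried out to the order needed so that the leading drift is exactly $\Psi$, produces remainder terms whose $L^1$ norms are genuinely $\mathcal O(n^{-1/k})$ and not larger — in particular one has to check that the "off-diagonal'' cross terms (coming from the fact that the conditional mean depends on $S_n-x_I$, not $S_n$) are absorbed at the right order, and that replacing $S_n/n$ by $\alpha + W/n^{1/2k}$ inside $\phi_\varrho'$ is legitimate with controlled error given only boundedness of the spins. Once the correct choice of $\lambda$ and the identification of $c_W$ via the second conditional moment are pinned down, the rest is an application of the already-established abstract estimate, and the $k=1$ case is a clean specialization. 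A secondary point requiring care is that the bound in (i) must be expressed through $c_W$ as defined in the statement; this forces us to keep the $\E(W^{2k})$ and $\E(W^2)$ terms exact throughout rather than passing to limits, which is in fact what makes the comparison distribution depend on $n$.
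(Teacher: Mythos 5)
Your plan matches the paper's proof: construct the Gibbs-sampler exchangeable pair under $\widehat P_{n,\beta}$, apply the Wasserstein/Lipschitz-test-function version of the exchangeable-pair estimate (this is Theorem \ref{generaldensity2} under Assumption (B1), not part (2) of Theorem \ref{generaldensity}, which gives Kolmogorov bounds), bound the variance term via the Lebowitz inequality as in the proof of Theorem \ref{CWgeneral}, and observe that $|W-W'|\le |X_I-X_I'|/n^{1-1/2k}$ so that the cubic term $\frac1\lambda\E|W-W'|^3$ contributes the moment-dependent piece, with the $\max(\E|X_1|^3,\E|X_1'|^3)$ obtained by expanding $|X_1-X_1'|^3$ and applying H\"older. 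Your small mislabeling aside, the three contributions you list — the variance of $\E[(W-W')^2\mid W]$, the $\E|W-W'|^3$ term, and the $L^1$ size of the drift remainder — are exactly what the paper estimates.
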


\begin{remark}
Assume that there exists a $\delta$ such that for any uniformly Lipschitz function $h$, $|\E h(W) - F(h) |\leq \delta \|h'\|$,
where $W$ is a random variable, $F(h) := \int_{\R} h(z) F(dz)$ for some distribution function $F$, then from the definition of the Wasserstein distance
it follows immediately that $\sup_{h \in {\rm Lip}(1)} |\E h(W) -F(h)| \leq \delta$. Moreover, the Kolmogorov distance
$\sup_{z} | P(W \leq z) - F(z)|$ can be bounded by $c_F \, \delta^{1/2}$, where $c_F$ is some constant depending on $F$ (the proof 
follows the lines of \cite[Theorem 3.1]{ChenShao:2005}).
\end{remark}

\begin{remark} \label{lebo}
In \cite{Ellis/Monroe/Newman:1976}, the distribution of the spins $\varrho$ are allowed to depend on the site.
They define a subclass ${\mathcal G}$ of $\mathcal{B}$ such that for $\varrho_1, \ldots, \varrho_n \in {\mathcal G}$ the GHS
inequality holds. In Section 7 we present a large class of measures which belong to $\mathcal{G}$
(see \cite[Theorem 1.2]{Ellis/Monroe/Newman:1976}). The GHS inequality itself has a number of interesting implications like
the concavity of the average magnetization as a function of the external field $h$ or the monotonicity of correlation length in Ising
models. These and other implications can be found in  \cite{Ellis/Monroe/Newman:1976} and references therein. Note that
for $\varrho \in {\rm GHS}$, $\phi_{\varrho}(s) \leq \frac 12 \sigma_{\varrho}^2 s^2$ for all real $s$, where $\sigma_{\varrho}^2 = \int_{\R} x^2 \, \varrho(dx)$.
These measures are called {\it sub-Gaussian}. Very important for our proofs of Berry-Esseen bounds will be the following
correlation-inequality due to Lebowitz \cite{Lebowitz:1974}: If $\E$ denotes the expectation with respect to the measure $P_{n, \beta, h_1, \ldots, h_n}$,
one observes easily that for any $\varrho \in {\mathcal B}$ and sites $i,j,k,l \in \{1, \ldots, n\}$ the following identity holds:
\begin{eqnarray} \label{ursell4}
& & \frac{\partial^3}{\partial h_i \, \partial h_j \, \partial h_k} \E (X_l) \biggl|_{{\rm all} \,\, h_i =0} \\
& = & \E(X_i X_j X_k X_l) - \E(X_i X_j) \E(X_k X_l)  - \E(X_i X_k) \E(X_j X_l)  - \E(X_i X_l) \E(X_j X_k). \nonumber
\end{eqnarray}
Lebowitz \cite{Lebowitz:1974} proved that if $\varrho \in {\rm GHS}$, then \eqref{ursell4} is non-positive (see \cite[V.13.7.(b)]{Ellis:LargeDeviations}
and \cite{Kondo/Otofuji/Sugiyama:1985}). Stein's method reduces to the computation of, or bounds on, {\it low order} moments, perhaps
even only on variances of certain quantities. Such variance computations can be very difficult. We will see in the proof
of Theorem \ref{CWgeneral} and Theorem \ref{CWgendep} the use of Lebowitz' inequality for bounding the variances successfully.
\end{remark}
\medskip

In the situation of Theorem \ref{CWgeneral} and Theorem \ref{CWgendep} we can bound higher order moments as follows:

\begin{lemma} \label{momentsgeneral}
Given $\varrho \in {\mathcal B}$, let $\alpha$ be one of the global minima of maximal type $k$ for $k \geq 1$ and strength $\mu$ of $G_{\varrho}$. For
$$
W :=  \frac{S_n - n \alpha}{n^{1 - 1/2k}}
$$
we obtain for any $l \in \N$
\begin{equation*}
\E |W|^{l} \leq \const(l).
\end{equation*}
\end{lemma}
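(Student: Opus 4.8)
The plan is to control $\E|W|^l$ via the large-deviation transform that underlies the whole Curie--Weiss analysis. Recall the Hubbard--Stratonovich trick: under $P_{n,\beta}$ the distribution of $S_n/n$ can be represented by tilting the empirical mean of i.i.d.\ $\varrho$-variables with a Gaussian, and the resulting density of $S_n/n$ is (up to normalization) $\exp(-n G_\varrho(\beta,s))$ smeared against a Gaussian kernel of width $O(n^{-1/2})$. Equivalently, writing $T_n := S_n/n^{1-1/2k}$, one has for any bounded measurable $g$
\[
\E\bigl(g(T_n)\bigr) = \frac{1}{\widetilde Z_n}\int_{\R} g\!\left(n^{1/2k}(s-\alpha)\cdot\text{(shift)}\right)\exp\bigl(-nG(\beta,s)\bigr)\sqrt{\tfrac{n\beta}{2\pi}}\,ds \;+\; \text{(Gaussian convolution)},
\]
so that estimating moments of $W$ reduces to estimating moments of a probability measure whose density is essentially proportional to $\exp(-nG(\beta,s))$ near its minima. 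First I would set up this representation precisely, changing variables $s = \alpha + x\, n^{-1/2k}$ so that, by the Taylor expansion \eqref{Taylor}, $nG(\beta,s) - nG(\beta,\alpha) = \frac{\mu}{(2k)!}x^{2k} + \mathcal{O}(n^{-1/2k}x^{2k+1})$ on the relevant scale.

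Next I would split the integral into the bulk region $|x|\le \eps n^{1/2k}$ (with $\eps$ small and fixed) and the tail $|x| > \eps n^{1/2k}$. On the bulk region the exponent is, for $n$ large and $\eps$ small, bounded below by $\frac{\mu}{2(2k)!}x^{2k}$ since the cubic-and-higher error is dominated; this gives a uniform-in-$n$ integrable majorant $\exp(-c\,x^{2k})$, and hence $\int |x|^l \exp(-c x^{2k})\,dx = \const(l) < \infty$ controls the contribution to $\E|W|^l$ from the bulk. On the tail region I would use that $\alpha$ is a \emph{global} minimum of maximal type, so $G(\beta,s) - G(\beta,\alpha) \ge \delta(\eps) > 0$ for all $s$ with $|s-\alpha| > \eps$ in the (finitely many) relevant neighborhoods, together with the superexponential growth of $G$ at infinity coming from \eqref{condrho}; thus the tail contributes at most $n^{\,l/2k}\cdot C\, e^{-n\delta(\eps)}\to 0$. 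One also needs the normalization $\widetilde Z_n$ bounded away from $0$, which follows from Laplace's method applied to the same bulk integral (it converges to $\int \exp(-\mu x^{2k}/(2k)!)\,dx > 0$), and one must absorb the auxiliary Gaussian convolution of width $O(n^{-1/2})$, which is harmless since it only adds an $O(1)$ random perturbation (even after scaling by $n^{1/2k}$ when $k=1$, and a vanishing perturbation when $k\ge 2$). If there are several global minima of maximal type, repeat the argument near each and sum the $\const(l)$.

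The main obstacle I anticipate is bookkeeping rather than conceptual: one must justify the Hubbard--Stratonovich representation carefully for general $\varrho \in \mathcal{B}$ (not just Bernoulli), verify that the error term in \eqref{Taylor} is genuinely $\mathcal{O}(x^{2k+1})$ uniformly on an $\eps$-neighborhood of $\alpha$ so that it can be absorbed for small $\eps$, and handle the contribution of local-but-not-global minima, which requires knowing $G(\beta,\cdot)$ attains its infimum only at the global minima and grows at infinity — both already available from \cite[Lemma 3.1, Lemma 4.1]{Ellis/Newman:1978} and condition \eqref{condrho}. Once the representation and these two quantitative facts about $G$ are in hand, the moment bound is a routine Laplace-type estimate with the explicit Gaussian/sub-Gaussian majorant $\exp(-c x^{2k})$ providing all moments at once.
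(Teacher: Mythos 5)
Your proposal is correct and follows essentially the same route as the paper: apply the Hubbard--Stratonovich transformation (Lemma \ref{hubbard}) with $\gamma = 1 - 1/2k$, use the Taylor expansion \eqref{Taylor} to recognize the transformed density as essentially $\mathrm{const.}\exp(-cx^{2k})$, and conclude from finiteness of its moments (with the Gaussian convolution contributing only bounded moments). The paper's own proof is a two-line sketch that defers the Laplace-method bulk/tail estimate and the uniform majorant $\exp(-cx^{2k})$ to \cite{Ellis/Newman:1978} and \cite{Eichelsbacher/Loewe:2004}; you fill in exactly those details.
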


We prepare for the proof of Lemma \ref{momentsgeneral}. It considers a well known transformation --
sometimes called the {\it Hubbard--Stratonovich transformation} -- of our measure of interest.

\begin{lemma}\label{hubbard}
Let $m\in \R$ and $0<\gamma<1$ be real numbers.
Consider the measure $Q_{n, \beta}:= \bigl( P_{n}\circ \left(\frac{S_n-nm}{n^\gamma}\right)^{-1} \bigr) \ast
\mathcal{N}(0,\frac 1 {\beta n^{2\gamma-1}})$ where $\mathcal{N}(0,\frac 1 {\beta n^{2\gamma-1}})$ denotes
a Gaussian random variable with mean zero and variance $\frac 1 {\beta n^{2\gamma-1}}$. Then
for all $n \ge 1$ the measure $Q_{n, \beta}$ is absolutely continuous with density
\begin{equation}\label{density}
\frac{\exp\left(-n G(\frac s {n^{1-\gamma}}+m)\right)}{\int_{\R} \exp\left(-n G(\frac s {n^{1-\gamma}}+m)\right)ds},
\end{equation}
where $G$ is defined in equation \eqref{Gdef}.
\end{lemma}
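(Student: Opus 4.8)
The plan is to compute the Lebesgue density of $Q_{n,\beta}$ directly from the convolution formula and to exploit the elementary Gaussian identity that converts the quadratic self-interaction $\exp(\tfrac{\beta}{2n}S_n^2)$ into an exponential linear in the spins, after which the single-spin integrals decouple. This is the classical Hubbard--Stratonovich computation; I set $W_n := (S_n - nm)/n^\gamma$ and $\tau_n^2 := (\beta n^{2\gamma-1})^{-1}$, so that $Q_{n,\beta}$ is the law of $W_n + \tau_n Z$ with $Z\sim\mathcal{N}(0,1)$ independent of the spins.

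First I would write, for a bounded measurable test function $f$,
$$\int f\, dQ_{n,\beta} = \E_{P_n}\!\left[\, \frac{1}{\sqrt{2\pi\tau_n^2}}\int_\R f(s)\,\exp\!\Big(-\tfrac{(s-W_n)^2}{2\tau_n^2}\Big)\,ds\,\right].$$
Exchanging the two integrations (Tonelli; the integrand is nonnegative), the density of $Q_{n,\beta}$ at $s$ is
$$q_n(s) = \frac{1}{\sqrt{2\pi\tau_n^2}}\,\E_{P_n}\!\Big[\exp\!\big(-\tfrac{(s-W_n)^2}{2\tau_n^2}\big)\Big].$$

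Next I would complete the square. Setting $a := n^\gamma s + nm$, a short computation gives $(s-W_n)^2/(2\tau_n^2) = \tfrac{\beta}{2n}(a-S_n)^2 = \tfrac{\beta}{2n}S_n^2 - \tfrac{\beta a}{n}S_n + \tfrac{\beta a^2}{2n}$. Substituting this into the expectation and using the definition of $P_n$, the factor $\exp(-\tfrac{\beta}{2n}S_n^2)$ cancels the Curie--Weiss weight $\exp(\tfrac{\beta}{2n}S_n^2)$, leaving
$$q_n(s) = \frac{1}{\sqrt{2\pi\tau_n^2}}\,\frac{e^{-\beta a^2/(2n)}}{Z_n(\beta)}\int_{\R^n}\exp\!\Big(\tfrac{\beta a}{n}\sum_{i=1}^n x_i\Big)\,d\varrho^{\otimes n}(x) = \frac{1}{\sqrt{2\pi\tau_n^2}}\,\frac{1}{Z_n(\beta)}\,\exp\!\Big(-\tfrac{\beta a^2}{2n}+n\,\phi_\varrho\big(\tfrac{\beta a}{n}\big)\Big),$$
the product over $i$ having factored and each factor equalling $e^{\phi_\varrho(\beta a/n)}$. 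Finally, with $t := a/n = s/n^{1-\gamma}+m$ the exponent becomes $-\beta n t^2/2 + n\phi_\varrho(\beta t) = -n\big(\tfrac{\beta t^2}{2}-\phi_\varrho(\beta t)\big) = -n\,G(s/n^{1-\gamma}+m)$, which is exactly the stated shape. Since $q_n$ is a probability density, the constant $(2\pi\tau_n^2)^{-1/2}Z_n(\beta)^{-1}$ must coincide with $\big(\int_\R \exp(-nG(s/n^{1-\gamma}+m))\,ds\big)^{-1}$; finiteness of that integral follows from \eqref{condrho}, which forces $G(\beta,\cdot)$ to grow at least quadratically (from $sx\le \tfrac{s^2}{2b}+\tfrac{bx^2}{2}$ one gets $\phi_\varrho(s)\le \tfrac{s^2}{2b}+\const$ for every $b>0$).

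There is no genuine obstacle here: the argument is purely computational. The only points demanding a little care are the justification of the interchange of integrations (immediate, by nonnegativity) and the algebraic bookkeeping in the completion of the square together with the rescaling $s\mapsto t$; the decoupling of the single-spin integrals, which is the heart of the Hubbard--Stratonovich trick, is automatic once the quadratic term has been linearized.
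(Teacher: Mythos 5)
Your proof is correct and is exactly the standard Hubbard--Stratonovich computation that the paper delegates to Ellis and Newman (1978, Lemma~3.3): writing the convolution density as $\frac{1}{\sqrt{2\pi\tau_n^2}}\E_{P_n}[\exp(-(s-W_n)^2/(2\tau_n^2))]$, completing the square so that the Gaussian kernel cancels the $\exp(\frac{\beta}{2n}S_n^2)$ weight, factorizing the remaining product measure into $e^{n\phi_\varrho(\beta a/n)}$, and recognizing $-nG$ in the exponent after the substitution $t=s/n^{1-\gamma}+m$. The algebra checks out (in particular $(s-W_n)^2/(2\tau_n^2)=\frac{\beta}{2n}(a-S_n)^2$ with $a=n^\gamma s+nm$), the Tonelli interchange is justified, and the finiteness of the normalizing integral via $\phi_\varrho(s)\le\frac{s^2}{2b}+\const$ for $b>\beta$ is the same observation the paper records in the remark following the lemma.
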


\begin{remark}
As shown in \cite{Ellis/Newman:1978}, Lemma 3.1, our condition \eqref{condrho} ensures that
$$
\int_{\R} \exp\left(-n G \left( \frac s {n^{1-\gamma}}+m \right) \right)ds
$$
is finite, such that the above density is well defined.
\end{remark}

\begin{proof}[Proof of Lemma \ref{hubbard}]
The proof of this lemma can be found at many places, e.g. in \cite{Ellis/Newman:1978}, Lemma 3.3.
\end{proof}

\begin{proof}[Proof of Lemma \ref{momentsgeneral}]
We apply the Hubbard-Stratonovich transformation with $\gamma=1-1/2k$. It is clear that this does not change the finiteness of any of the moments of $W$.
Using the Taylor expansion \eqref{Taylor} of $G$, we see that the density of $Q_{n, \beta}$ with respect to Lebesgue measure is given by
$\mathrm{Const.}\exp(-x^{2k})$ (up to negligible terms, see e.g. \cite{Ellis/Newman:1978}, \cite{Eichelsbacher/Loewe:2004}).
A measure with this density, of course, has moments of any finite order.
\end{proof}

\begin{remark} As we will see, we only have to bound $\E(W^4)$ in the classical model, when $0< \beta <1$. This can be obtained
directly using the definition of $P_n$ and Taylor-expansion. But already for the classical model, for $\beta=1$, it is quite cumbersome
to bound higher order moments via direct calculations.
\end{remark}

\medskip
In Section 2, we develop in Theorem \ref{ourmain}, Corollary \ref{corsigma} and Corollary \ref{corsigma2} refinements of
Stein's method for exchangeable pairs in the case of normal approximation.
As a first application we prove Theorem \ref{CW} in Section 3. In Section 4 we develop Stein's method for exchangeable
pairs for a rich class of other distributional approximations. Obtaining good bounds for the solutions of the
corresponding Stein equations in the appendix, we prove Theorem \ref{CWcritical} and Theorem \ref{CWbetadep} in Section 5, applying
Theorem \ref{generaldensity}.
In Section 6, we proof Theorems \ref{CWgeneral}, \ref{CWgendep} and \ref{wasserstein}, applying Corollary \ref{corsigma2} and
Theorem \ref{generaldensity2}.
Section 7 contains a collection of examples including the Curie-Weiss model with three states, studying liquid helium, and
a continuous Curie-Weiss model, where the single spin distribution $\varrho$ is a uniform distribution.

\section{Stein's method with exchangeable pairs for normal approximation}\label{hajek}

Stein introduced in \cite{Stein:1986} the exchangeable pair approach. Given a random variable $W$, Stein's method
is based on the construction of another variable $W'$ (some coupling) such that the pair $(W, W')$ is exchangeable, i.e. their
joint distribution is symmetric. The approach essentially uses the elementary fact that if $(W,W')$ is an exchangeable
pair, then $\E g(W,W')=0$ for all antisymmetric measurable functions $g(x,y)$ such that the expectation exists.
A theorem of Stein (\cite[Theorem 1, Lecture III]{Stein:1986}) 
shows that a measure of proximity of $W$ to normality may be provided in terms
of the exchangeable pair, requiring $W'-W$ to be sufficiently small. He assumed the linear regression property
$$
\E(W'|W)=(1-\lambda) \, W
$$
for some $0 < \lambda <1$. This approach has been successfully applied in many models, see \cite{Stein:1986} and
for example \cite{DiaconisStein:2004} and references therein. In \cite{Rinott/Rotar:1997}, the range of application was
extended by replacing the linear regression property by a weaker condition, allowing to hold the regression property only
approximately.
The exchangeable pair approach is also successful for other distributional approximations, as will be shown in Section 4.
We develop Stein's method by replacing the linear regression property by
$$
\E(W'|W) = W + \lambda \, \psi(W) + R(W),
$$
where $\psi(x)$ will be depend on a continuous distribution under consideration. Before we consider in this section the case of normal approximation,
we mention that this is not the first paper to study other distributional approximations via Stein's method. For a rather large
class of continuous distributions, the Stein characterization was introduced in \cite{DiaconisStein:2004}, following \cite[Chapter 6]{Stein:1986}.
In \cite{DiaconisStein:2004}, the method of exchangeable pairs was introduced for this class of distribution and used in a simulation
context. Recently, the exchangeable pair approach was introduced for exponential approximation in \cite[Lemma 2.1]{Chatterjee/Fulman/Roellin:2009}.

For measuring the distance of the distribution of $W$ and the
standard normal distribution (or any other distribution), we would like to bound
$$
| \E h(W) - \Phi(h) |
$$
for a class of test functions $h \in \Hcal$, where $\Phi(h) := \int_{-\infty}^{\infty} h(z) \Phi(dz)$ and $\Phi$ is the standard normal distribution function.
One advantage of Stein's method is that we are able to obtain bounds for different distances like the Wasserstein distance $d_{\rm{w}}$, 
the total variation distance  $d_{\rm{TV}}$ or the Kolmogorov distance $d_{\rm{K}}$.
In \cite{Rinott/Rotar:1997}, the exchangeable pair
approach of Stein was developed for a broad class of non smooth functions $h$, applying standard smoothing inequalities.

They proved the following:


\begin{theorem}[Rinott, Rotar: 1997]\label{RR} Consider a random variable $W$ with $\E(W)=0$ and $\E(W^2)=1$. Let $(W, W')$ be an exchangeable pair
(i.e., their joint distribution is symmetric). Define a random variable $R=R(W)$ by
$$
E(W' |W) = (1 - \lambda) W +R,
$$
where $\lambda$ is a number satisfying $0 < \lambda <1$. If moreover
$$
|W' - W| \leq A
$$
for a constant $A$. Then
one obtains
\begin{equation} \label{rr}
\sup_{z \in \R} |P(W \leq z) - \Phi(z)|  \leq \frac{12}{\lambda} \sqrt{ {\rm  var} \{ \E[(W'-W)^2 |W] \}} + 37 \frac{\sqrt{\E(R^2)}}{\lambda}
+ 48 \sqrt{2/ \pi} \frac{A^3}{\lambda} +  \sqrt{2/ \pi} \frac{A^2}{\sqrt{\lambda}}.
\end{equation}
\end{theorem}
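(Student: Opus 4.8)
The plan is to prove Theorem~\ref{RR} by running Stein's method for the standard normal distribution with the exchangeable pair $(W,W')$, combined with a smoothing argument that passes from Lipschitz test functions to indicators of half-lines. First I would fix $z\in\R$ and a parameter $\eps>0$ and introduce a piecewise-linear function $h_{z,\eps}$ equal to $1$ on $(-\infty,z]$, equal to $0$ on $[z+\eps,\infty)$ and linearly interpolating in between, so that $\1_{(-\infty,z]}\le h_{z,\eps}\le\1_{(-\infty,z+\eps]}$ and $\|h_{z,\eps}'\|\le 1/\eps$. Then $P(W\le z)-\Phi(z)$ is sandwiched between $\E h_{z-\eps,\eps}(W)-\Phi(h_{z-\eps,\eps})-\bigl(\Phi(z)-\Phi(z-\eps)\bigr)$ and $\E h_{z,\eps}(W)-\Phi(h_{z,\eps})+\bigl(\Phi(z+\eps)-\Phi(z)\bigr)$, and the Gaussian strip probabilities are $O(\eps)$; so it suffices to bound $|\E h(W)-\Phi(h)|$ for Lipschitz $h$ with $\|h\|\le1$ and then to optimise over $\eps$. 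For such $h$ let $f=f_h$ be the bounded solution of the Stein equation $f'(w)-wf(w)=h(w)-\Phi(h)$, which obeys the classical bounds $\|f\|\le\sqrt{2\pi}/4$, $\|f'\|\le1$ and $\|f''\|\le 2\|h'\|$.

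The heart of the argument is the exchangeable-pair identity. Since $g(w,w'):=(w'-w)\bigl(f(w')+f(w)\bigr)$ is antisymmetric, $\E\,g(W,W')=0$. Writing $f(W')-f(W)=(W'-W)f'(W)+\tfrac12(W'-W)^2 f''(\xi)$ for an intermediate point $\xi$ and taking conditional expectations given $W$ turns this into
$$
0=2\,\E\bigl[f(W)\,\E(W'-W\mid W)\bigr]+\E\bigl[(W'-W)^2 f'(W)\bigr]+\tfrac12\,\E\bigl[(W'-W)^3 f''(\xi)\bigr].
$$
Substituting the approximate regression hypothesis $\E(W'-W\mid W)=-\lambda W+R$, dividing by $2\lambda$ and using $h(W)-\Phi(h)=f'(W)-Wf(W)$ yields the decomposition
$$
\E h(W)-\Phi(h)=\E\Bigl[f'(W)\Bigl(1-\tfrac1{2\lambda}\,\E\bigl((W'-W)^2\mid W\bigr)\Bigr)\Bigr]-\tfrac1\lambda\,\E\bigl[R\,f(W)\bigr]-\tfrac1{4\lambda}\,\E\bigl[(W'-W)^3 f''(\xi)\bigr].
$$
For the first term I would set $T:=\E((W'-W)^2\mid W)$, note that exchangeability together with $\E W^2=1$ gives $\E T=2\lambda-2\,\E[WR]$, and split $\E[f'(W)(1-T/2\lambda)]=\E[f'(W)]\cdot\E[WR]/\lambda-\tfrac1{2\lambda}\mathrm{Cov}(f'(W),T)$. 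Cauchy--Schwarz bounds $|\E[WR]|\le\sqrt{\E R^2}$ and $|\mathrm{Cov}(f'(W),T)|\le\|f'\|\sqrt{\mathrm{var}(T)}$, producing the $\lambda^{-1}\sqrt{\mathrm{var}(\E[(W'-W)^2\mid W])}$ and $\lambda^{-1}\sqrt{\E R^2}$ contributions, while $\lambda^{-1}|\E[Rf(W)]|\le\|f\|\lambda^{-1}\sqrt{\E R^2}$.

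The remaining term is where the non-smoothness bites, and it is the main obstacle. Bounding it crudely by $\|f''\|\le2/\eps$ gives only $O(A/\eps)$, so balancing against the $O(\eps)$ smoothing error would yield $O(\sqrt A)$, far weaker than the claimed $A^3/\lambda+A^2/\sqrt\lambda$. To recover the sharp rate one must avoid the second Taylor expansion, keep $h$ an \emph{indicator} $\1_{(-\infty,z]}$, and write $f(W')-f(W)-(W'-W)f'(W)=\int_0^{W'-W}\bigl(f'(W+t)-f'(W)\bigr)dt$; then use $f'(w)=wf(w)+h(w)-\Phi(h)$ to split the integrand into a smooth part, controlled by $\|f\|$, $\|f'\|$ and $|W'-W|\le A$ (this produces the $A^3/\lambda$ term), and the jump part $\int_0^{W'-W}\bigl(\1(W+t\le z)-\1(W\le z)\bigr)dt$, which is supported on $\{|W-z|\le A\}$ and bounded there by $A$. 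Taking expectations, the jump part contributes at most $A^2\,P(|W-z|\le A)$, and one closes the estimate with an anti-concentration bound for the law of $W$ of the same shape as the right-hand side of \eqref{rr} (in particular with a $\lambda^{-1/2}$), itself obtained by feeding the indicator of a short interval into the very same exchangeable-pair machinery; this is the source of the $A^2/\sqrt\lambda$ term. Assembling the four pieces and tracking the numerical constants $12,\,37,\,48\sqrt{2/\pi},\,\sqrt{2/\pi}$ through the smoothing then gives \eqref{rr}. Beyond this concentration inequality and the routine Cauchy--Schwarz estimates, the argument is purely the exchangeable-pair identity applied to well-chosen antisymmetric functions.
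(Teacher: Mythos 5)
Be aware first of all that the paper does not prove Theorem~\ref{RR}: it is quoted from Rinott and Rotar (1997) without proof. The closest argument the paper actually gives is the proof of its own refinement, Theorem~\ref{ourmain}, which follows the Shao--Su concentration approach: write $P(W\le z)-\Phi(z)=T_1+T_2+T_3$ from the exchangeable-pair identity and the Stein equation with $h=\1_{(-\infty,z]}$, bound $T_1$ and $T_2$ by $\|f'\|\le1$ and $\|f\|\le\sqrt{2\pi}/4$, and control $T_3$ by the concentration inequality \eqref{conz1}, which is obtained by plugging a specific piecewise-linear $f$ (the ramp from $-1.5A$ to $1.5A$ around $z$) back into the exchangeable-pair identity --- \emph{not} by a recursive invocation of a Berry--Esseen bound.

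Your proposal conflates two incompatible strategies. You begin by describing the smoothing route (which is indeed what Rinott--Rotar actually do, and which is where constants such as $12$, $37$, $48\sqrt{2/\pi}$ and the term $\sqrt{2/\pi}\,A^2/\sqrt{\lambda}$ originate), correctly observe that a crude $\|f''\|\le 2/\eps$ bound is too lossy, and then switch to the Shao--Su concentration device. But if you carry out the concentration argument you land on a bound of the shape in \eqref{ss2}/\eqref{mainbound}: a variance term, an $R$-term, an $A^3/\lambda$ term, and a $1.5A$ term; there is no $A^2/\sqrt{\lambda}$ term, because \eqref{conz1} yields $3A(\lambda+\E|R|)$ and dividing by $2\lambda$ gives $1.5A$, not $A^2/\sqrt{\lambda}$. (The paper's own remark after Theorem~\ref{SS} says explicitly that the concentration bound \emph{improves} the Rinott--Rotar constants precisely because it is a different estimate.) Your closing sentence, ``tracking the numerical constants $\dots$ through the smoothing then gives \eqref{rr},'' therefore does not follow from the argument you just sketched; to reproduce Rinott--Rotar's exact right-hand side you would have to return to their smoothing-inequality induction and carry it through, whereas the route you describe proves a genuinely different (and, in fact, sharper) bound. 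A secondary issue: your description of the jump-term estimate as ``an anti-concentration bound $\ldots$ of the same shape as the right-hand side of \eqref{rr}, itself obtained by feeding the indicator of a short interval into the very same exchangeable-pair machinery'' reads as a bootstrap that would reintroduce the unknown Kolmogorov distance on the right-hand side; in the paper's proof this circularity never arises because the concentration lemma is proved \emph{directly} from the exchangeable-pair identity with a bounded Lipschitz $f$, without reference to $d_{\rm K}(W,Z)$.
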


\begin{remark} Rinott and Rotar also proved a bound in the case, where $|W'-W|$ is not assumed to be bounded. In this case, the last two
summands on the right hand side of \eqref{rr} have to be replaced by
$$
\sqrt{ \frac{a}{\lambda} \E | W'-W|^3}.
$$
This estimation is crude, since even for a normalized sum of $n$ independent variables $W$, it leads to a bound of the order $n^{-1/4}$.
The advantage of the results in \cite{Rinott/Rotar:1997} is, that these bounds do not only apply to indicators on half lines, but also to
a broad class of non smooth test functions, see \cite[Section 1.2]{Rinott/Rotar:1997}.
\end{remark}

Chen and Shao introduced a concentration inequality approach. Here a concentration inequality is proved using the Stein
identity (see \cite{ChenShao:2001} and \cite{ChenShao:2005}). In the context of the construction of an exchangeable pair,
in \cite{ShaoSu:2006} Shao and Su proved the following theorem:

\begin{theorem}[Shao, Su: 2005]\label{SS} Let $W$ be a random variable with $\E(W)=0$ and $\E(W^2) \leq 1$ and $(W, W')$ be an exchangeable pair
such that
$$
E(W' |W) = (1 - \lambda) W
$$
with  $0 < \lambda <1$, then for any $a >0$
\begin{equation} \label{ss1}
\sup_{z \in \R} |P(W \leq z) - \Phi(z) | \leq \sqrt{ \E \biggl( 1 - \frac{1}{2 \lambda} E((W-W')^2|W) \biggr)^2} + \frac{0.41 a^3}{\lambda} +
1.5 a + \frac{1}{2 \lambda} \E ( (W-W') 1_{\{|W-W'| \geq a\}} ).
\end{equation}
If $ |W - W'| \leq A$, then the bound reduces to
\begin{equation} \label{ss2}
\sup_{z \in \R} |P(W \leq z) - \Phi(z) | \leq \sqrt{ \E \biggl( 1 - \frac{1}{2 \lambda} E((W-W')^2|W) \biggr)^2} + \frac{0.41 A^3}{\lambda} + 1.5 A.
\end{equation}
\end{theorem}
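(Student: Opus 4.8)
The plan is to run the concentration-inequality version of Stein's method for exchangeable pairs. Fix $z\in\R$ and let $f_z$ be the bounded solution of the Stein equation $f_z'(w)-wf_z(w)=1_{\{w\le z\}}-\Phi(z)$; recall the classical bounds $0<f_z(w)\le\sqrt{2\pi}/4$, $\|f_z'\|\le1$, $\|wf_z(w)\|\le1$, together with a Lipschitz-type control $|f_z'(v)-f_z'(w)|\le(|w|+1)|v-w|+1_{\{w\le z<v\}}$ for $w\le v$. First I would convert exchangeability into a usable identity: since $F(w,w')=(w-w')\bigl(f_z(w)+f_z(w')\bigr)$ is antisymmetric, $\E F(W,W')=0$, and conditioning on $W$ with $\E(W'\mid W)=(1-\lambda)W$ gives
$$
\lambda\,\E\bigl[Wf_z(W)\bigr]=\tfrac12\,\E\bigl[(W'-W)\bigl(f_z(W')-f_z(W)\bigr)\bigr].
$$
Substituting this into the Stein equation evaluated at $W$ yields the basic formula
$$
P(W\le z)-\Phi(z)=\E\bigl[f_z'(W)\bigr]-\tfrac1{2\lambda}\,\E\bigl[(W'-W)\bigl(f_z(W')-f_z(W)\bigr)\bigr].
$$

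Next I would write $(W'-W)\bigl(f_z(W')-f_z(W)\bigr)=\int_{\R}f_z'(W+t)\,\widehat K_W(t)\,dt$ with the random kernel $\widehat K_W(t)=(W'-W)\bigl(1_{\{0\le t\le W'-W\}}-1_{\{W'-W\le t\le0\}}\bigr)\ge0$, which satisfies $\int_{\R}\widehat K_W(t)\,dt=(W'-W)^2$ and $\int_{\R}|t|\,\widehat K_W(t)\,dt\le|W'-W|^3$, and then split off the ``expected'' contribution:
$$
P(W\le z)-\Phi(z)=\E\Bigl[f_z'(W)\Bigl(1-\tfrac1{2\lambda}\E\bigl[(W'-W)^2\mid W\bigr]\Bigr)\Bigr]+\tfrac1{2\lambda}\,\E\!\int_{\R}\bigl(f_z'(W)-f_z'(W+t)\bigr)\widehat K_W(t)\,dt.
$$
The first term is at most $\|f_z'\|\,\E\bigl|1-\tfrac1{2\lambda}\E[(W'-W)^2\mid W]\bigr|\le\sqrt{\E\bigl(1-\tfrac1{2\lambda}\E[(W'-W)^2\mid W]\bigr)^2}$ by $\|f_z'\|\le1$ and Cauchy--Schwarz, which is exactly the leading term of \eqref{ss1}. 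For the second term I would use $f_z'(W)-f_z'(W+t)=\bigl[Wf_z(W)-(W+t)f_z(W+t)\bigr]+\bigl[1_{\{W\le z\}}-1_{\{W+t\le z\}}\bigr]$: the smooth bracket is controlled by the Lipschitz bound above, so integrating it against $\widehat K_W$ and using $\E(W^2)\le1$ produces a contribution of order $\lambda^{-1}\E|W'-W|^3$, while the indicator bracket is handled by a concentration estimate of the form $P(z-\delta\le W\le z+\delta)\le c\,\delta+(\text{small error})$ applied with $\delta$ of order $|t|$.

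The concentration estimate is the technical heart of the argument: I would prove it by inserting a bounded, piecewise-linear function of $W$ into the same exchangeable-pair identity (the standard Chen--Shao device), the hypotheses $\E(W^2)\le1$ and $\E(W'\mid W)=(1-\lambda)W$ being precisely what is needed to control the resulting error terms. Finally, to obtain the explicit constants $0.41$ and $1.5$ one truncates $|W'-W|$ at the level $a$: on $\{|W'-W|<a\}$ the smooth and concentration bounds combine, after optimizing the free parameters, into the $\tfrac{0.41a^3}{\lambda}$ and $1.5\,a$ terms, while on $\{|W'-W|\ge a\}$ the remaining error is absorbed into $\tfrac1{2\lambda}\E\bigl[(W-W')1_{\{|W-W'|\ge a\}}\bigr]$; in the bounded case one takes $a=A$, so that this last term vanishes and \eqref{ss2} results. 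The main obstacle is precisely this concentration inequality together with the bookkeeping needed for sharp constants, and the reason one cannot simply Taylor-expand $f_z'$ is its jump at $z$, which is exactly what forces the concentration route.
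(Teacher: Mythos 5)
Your sketch reproduces precisely the Shao--Su concentration-inequality scheme, which is also how the paper proves its refinement Theorem~\ref{ourmain} (Theorem~\ref{SS} is only quoted from Shao--Su and is the $R\equiv 0$ case of Theorem~\ref{ourmain}): the same antisymmetry identity, the same split into the Cauchy--Schwarz term, the smooth $wf_z(w)$-increment term, and the indicator term estimated by a concentration inequality obtained from a piecewise-linear trial function. The only small imprecision is your stated Lipschitz-type control, which implicitly uses $\|f_z\|\le 1$ rather than the sharp $\|f_z\|\le\sqrt{2\pi}/4$ needed to land exactly on the constant $0.41$ (one would get $0.5$); also the remainder indicator term in \eqref{ss1} should read $(W-W')^2$, as in Theorem~\ref{ourmain}, but that is a typo you inherited from the statement rather than an error in your argument.
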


\begin{remark}
When $|W-W'|$ is bounded, \eqref{ss2} improves \eqref{rr} with respect to the constants.
\end{remark}

Following the lines of the proofs in \cite{Rinott/Rotar:1997} and \cite{ShaoSu:2006}, we obtain the following refinement:
Given two random variables $X$ and $Y$ defined on a common probability space, we denote by
$$
d_{\rm{K}}(X,Y) := \sup_{z \in \R} | P(X \leq z) - P(Y \leq z)|
$$
the Kolmogorov distance of the distributions of $X$ and $Y$.

\begin{theorem} \label{ourmain}
Let $(W, W')$ be an exchangeable pair of real-valued random variables such that
$$
E(W' |W) = (1 - \lambda) W +R
$$
for some random variable $R = R(W)$ and with $0 < \lambda <1$. Assume that $\E(W^2) \leq 1$. Let $Z$ be a random variable
with standard normal distribution.
Then for any $A >0$,
\begin{eqnarray*}
d_{\rm{K}}(W, Z) & \leq & \sqrt{ \E \biggl( 1 - \frac{1}{2 \lambda}  \E[(W'-W)^2 |W] \biggr)^2} +
\biggl( \frac{\sqrt{2 \pi}}{4} + 1.5 A \biggr) \frac{\sqrt{\E(R^2)}}{\lambda} \\
& &+ \frac{0.41 A^3}{\lambda} +  1.5 A  + \frac{1}{2 \lambda} \E \bigl( (W-W')^2 1_{\{|W-W'| \geq A\}} \bigr).
\end{eqnarray*}
If $|W - W'| \leq A$ for a constant
$A$, we obtain the bound
\begin{equation} \label{mainbound}
d_{\rm{K}}(W, Z) \leq \sqrt{ \E \biggl( 1 - \frac{1}{2 \lambda}  \E[(W'-W)^2 |W] \biggr)^2}  + \biggl( \frac{\sqrt{2 \pi}}{4} + 1.5 A \biggr)
\frac{\sqrt{\E(R^2)}}{\lambda} + \frac{0.41 A^3}{\lambda} +  1.5 A.
\end{equation}
\end{theorem}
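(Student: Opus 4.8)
The proof is an exchangeable-pair Stein argument run against the (non-smooth) test functions $h_z:=\1_{(-\infty,z]}$, combining the device of \cite{Rinott/Rotar:1997} with the concentration-inequality refinement of \cite{ChenShao:2005, ShaoSu:2006}, while keeping track of the remainder $R$. Fix $z\in\R$ and let $f=f_z$ be the bounded solution of the Stein equation $f'(w)-wf(w)=h_z(w)-\Phi(z)$, so that $d_{\rm{K}}(W,Z)=\sup_{z\in\R}\bigl|\E[f'(W)-Wf(W)]\bigr|$. I would use the classical estimates $\|f_z\|\le\sqrt{2\pi}/4$, $\|f_z'\|\le1$, $|wf_z(w)|\le1$, together with the increment bound $|f_z'(w+s)-f_z'(w)|\le\bigl(|w|+\tfrac{\sqrt{2\pi}}{4}\bigr)|s|+\1_{\{z\text{ lies between }w\text{ and }w+s\}}$, which follows from the explicit formula for $f_z$.

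The first step exploits exchangeability: since $(w,w')\mapsto(w'-w)\bigl(f(w')+f(w)\bigr)$ is antisymmetric, $\E\bigl[(W'-W)(f(W')+f(W))\bigr]=0$; expanding, using $\E[W'-W\mid W]=-\lambda W+R$, and writing $f(W')-f(W)=(W'-W)\int_0^1 f'\bigl(W+t(W'-W)\bigr)\,dt$, one obtains the identity
\begin{multline*}
\E[f'(W)-Wf(W)]=\E\Bigl[f'(W)\Bigl(1-\tfrac{1}{2\lambda}\E[(W'-W)^2\mid W]\Bigr)\Bigr]-\frac1\lambda\E[Rf(W)]\\
-\frac1{2\lambda}\E\Bigl[(W'-W)^2\int_0^1\bigl(f'(W+t(W'-W))-f'(W)\bigr)\,dt\Bigr].
\end{multline*}
The first term on the right is at most $\bigl(\E(1-\tfrac1{2\lambda}\E[(W'-W)^2\mid W])^2\bigr)^{1/2}$ by $\|f'\|\le1$ and Cauchy--Schwarz, and the second is at most $\tfrac{\sqrt{2\pi}}{4}\,\tfrac{\sqrt{\E R^2}}{\lambda}$ by $\|f\|\le\sqrt{2\pi}/4$ and Cauchy--Schwarz.

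It remains to bound the third term. Applying the increment bound with $w=W$, $s=t(W'-W)$ splits it into a ``smooth'' contribution and a ``jump'' contribution. For the smooth part one gets at most $\tfrac1{4\lambda}\E\bigl[|W'-W|^3\bigl(|W|+\tfrac{\sqrt{2\pi}}4\bigr)\bigr]$, and since $\E|W|\le(\E W^2)^{1/2}\le1$ this is $\le\tfrac1{4\lambda}\bigl(1+\tfrac{\sqrt{2\pi}}4\bigr)\E|W'-W|^3$, which under $|W'-W|\le A$ is bounded by $\tfrac{0.41A^3}{\lambda}$; in the unbounded case one first truncates at level $A$, producing the leftover summand $\tfrac1{2\lambda}\E[(W'-W)^2\1_{\{|W'-W|\ge A\}}]$. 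For the jump part, $|h_z(W+s)-h_z(W)|\le\1_{\{|W-z|\le|W'-W|\}}$, so that contribution is at most $\tfrac1{2\lambda}\E\bigl[(W'-W)^2\1_{\{|W-z|\le|W'-W|\}}\bigr]$; here I would invoke a Chen--Shao-type concentration inequality $\P(z-a\le W\le z+a)\le c\,a+(\text{error terms in }\lambda,\sqrt{\E R^2},A)$, established by feeding a piecewise-linear test function into the same exchangeable-pair identity. Substituting it and collecting constants yields the remaining contribution $1.5A$ together with the piece $\tfrac{1.5A}{\lambda}\sqrt{\E R^2}$, which merges with $\tfrac{\sqrt{2\pi}}{4}\tfrac{\sqrt{\E R^2}}{\lambda}$ into the coefficient $\bigl(\tfrac{\sqrt{2\pi}}{4}+1.5A\bigr)\tfrac{\sqrt{\E R^2}}{\lambda}$. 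Assembling the four estimates gives the general bound, and the case $|W'-W|\le A$ follows by dropping the truncation term.

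The step I expect to be the main obstacle is the jump part: one must prove (not merely quote) a sufficiently sharp exchangeable-pair concentration inequality for $W$ that carries both the remainder $R$ and the truncation level $A$ explicitly, and then verify that all numerical constants assemble into exactly $0.41$ and $1.5$. The other three estimates are routine once the identity in the second paragraph is in place.
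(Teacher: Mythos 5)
Your proposal follows the paper's argument essentially step for step: the same exchangeability identity, the same decomposition into the three terms $T_1,T_2,T_3$, the same bounds $\|f_z\|\leq\sqrt{2\pi}/4$ and $\|f_z'\|\leq1$ for $T_1,T_2$, and for $T_3$ the same device of splitting $f_z'(W+t)-f_z'(W)$ via the Stein identity into the linear part $((W+t)f_z(W+t)-Wf_z(W))$ and the indicator jump, with the former giving $0.41A^3/\lambda$ and the latter handled by feeding a piecewise-linear $f$ back into the exchangeable-pair identity. You correctly identify the concentration step as the crux.

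One detail to correct in that step: the inequality you would actually need and prove is not a pure probability bound $\P(z-a\le W\le z+a)\le ca+(\text{error})$. If you had such a bound and then reinstated the weight via $(W'-W)^2\le A^2$, you would obtain a term of size $A^3/\lambda$ rather than $1.5A$, so the constants would not assemble as claimed. The piecewise-linear test function, inserted into $0=\E[(W-W')(f(W')+f(W))]$, directly yields the \emph{weighted} bound
$$\E\bigl[(W-W')^2\1_{\{0\le W-W'\le A\}}\1_{\{z\le W\le z+A\}}\bigr]\le 3A\bigl(\lambda+\E|R|\bigr),$$
and it is dividing this by $2\lambda$ that produces the summands $1.5A$ and $1.5A\sqrt{\E(R^2)}/\lambda$ which merge, as you say, with the $T_2$ contribution into the coefficient $(\sqrt{2\pi}/4+1.5A)\sqrt{\E(R^2)}/\lambda$. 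With that one form fixed, your outline matches the paper's proof.
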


\begin{remark}
When $|W-W'|$ is bounded, \eqref{mainbound} improves \eqref{rr} with respect to the Berry-Esseen constants.
\end{remark}

\begin{proof} We sketch the proof: For a function $f$ with $|f(x)| \leq C(1 +|x|)$ we obtain
\begin{eqnarray} \label{start}
0 & = & \E \bigl( (W-W')(f(W')+f(W)) \bigr) \nonumber \\
& = & \E \bigl( (W-W')(f(W')-f(W)) \bigr) + 2 \lambda \E(W f(W)) - 2 \E(f(W) \, R).
\end{eqnarray}
Let $f=f_z$ denote the solution of the Stein equation
\begin{equation} \label{steinnormal}
f_z'(x) - xf_z(x) = 1_{\{x \leq z\}}(x) - \Phi(z).
\end{equation}
We obtain
\begin{eqnarray} \label{tis}
P(W \leq z) - \Phi(z) & = & \E (f'(W) - W f(W)) \nonumber \\
& = & \E (f'(W)) - \frac{1}{2 \lambda} \E  \bigl( (W-W')(f(W)-f(W')) \bigr) - \frac{1}{\lambda} \E (f(W) \, R) \nonumber \\
& = & \E \biggl( f'(W) \bigl( 1 - \frac{1}{2 \lambda} (W-W')^2 \bigr) \biggr) + \E \biggl( f'(W) \frac{1}{2 \lambda} (W-W')^2 \biggr) \nonumber \\
& & - \frac{1}{2 \lambda} \E  \bigl( (W-W')(f(W)-f(W')) \bigr) - \frac{1}{\lambda} \E (f(W) \, R) \nonumber \\
& =&  \E \biggl( f'(W) \bigl( 1 - \frac{1}{2 \lambda} (W-W')^2 \bigr) \biggr) - \frac{1}{2 \lambda} \E (2 f(W) \, R) \nonumber \\
& & - \frac{1}{2 \lambda} \E \biggl[ (W-W') \bigl( f(W) - f(W') - (W-W')f'(W) \bigr) \biggr] \nonumber \\
& =: & T_1 + T_2 + T_3.
\end{eqnarray}
Using $|f'(x)| \leq 1$ for all real $x$ (see \cite[Lemma 2.2]{ChenShao:2005}), we obtain the bound
$$
|T_1| \leq \sqrt{ \E \biggl( 1 - \frac{1}{2 \lambda}  \E[(W'-W)^2 |W] \biggr)^2}.
$$
Using $0 < f(x) \leq \sqrt{2 \pi}/4$ (see \cite[Lemma 2.2]{ChenShao:2005}), we have
$$
|T_2| \leq \frac{\sqrt{2 \pi}}{4 \lambda} \E(|R|) \leq  \frac{\sqrt{2 \pi}}{4 \lambda} \sqrt{ \E(R^2)}.
$$
Bounding $T_3$ we apply the concentration technique, see \cite{ShaoSu:2006}:
\begin{eqnarray} \label{t3}
(- 2 \lambda) \, T_3 & = & \E \biggl( (W-W') 1_{\{|W-W'| > A\}} \int_{-(W-W')}^0 (f'(W+t)-f'(W)) dt \biggr) \nonumber \\
&+ &   \E \biggl( (W-W') 1_{\{|W-W'| \leq A\}} \int_{-(W-W')}^0 (f'(W+t)-f'(W)) dt \biggr).
\end{eqnarray}
The modulus of the first term can be bounded by $\E \bigl( (W-W')^2 1_{\{|W-W'| >A\}} \bigr)$ using $|f'(x) - f'(y)| \leq 1$
for all real $x$ and $y$ (see  \cite[Lemma 2.2]{ChenShao:2005}). Using the Stein identity \eqref{steinnormal}, the second summand can be represented
as
\begin{eqnarray*}
& & \E \biggl( (W-W') 1_{\{|W-W'| \leq A\}} \int_{-(W-W')}^0 \bigl( (W+t)f(W+t) - Wf(W) \bigr) dt \biggr) \\
& & + \E \biggl( (W-W') 1_{\{|W-W'| \leq A\}} \int_{-(W-W')}^0 (1_{\{W+t \leq z\}} - 1_{\{W \leq z\}}) dt \biggr) =: U_1 + U_2.
\end{eqnarray*}
Next observe that $|U_1| \leq 0.82 A^3$, see \cite{ShaoSu:2006}: by the mean value theorem one gets
$$
(W+t) f(W+t) - W f(W) = W ( f(W+t) - f(W)) + t f(W+t) = W \bigl( \int_0^1 f'(W + ut) t du \bigr) + t f(W+t).
$$
Hence
$$
|(W+t) f(W+t) - W f(W) | \leq |W| \, |t|  + |t| \sqrt{2 \pi}/4 = |t| (\sqrt{2 \pi}/4 + |W|).
$$
Using $\E |W| \leq \sqrt{ \E(W^2)} \leq 1$ gives the bound. The term $U_2$ can be bounded by
$$
\E \bigl( (W-W')^2 I_{\{0 \leq (W-W') \leq A\}} \, 1_{\{z \leq W \leq z+A\}} \bigr).
$$
Under the assumptions of our Theorem we proceed as in \cite{ShaoSu:2006} and obtain the following
concentration inequality:
\begin{equation} \label{conz1}
\E \bigl( (W-W')^2 I_{\{0 \leq (W-W') \leq a\}} \, 1_{\{z \leq W \leq z+A\}} \bigr) \leq 3A (\lambda + \E(R)).
\end{equation}
To see this, we apply the estimate
$$
\E \bigl( (W-W')^2 I_{0 \leq (W-W') \leq A} \, 1_{z \leq W \leq z+A} \bigr) \leq \E \bigl( (W-W')(f(W)-f(W')) \bigr),
$$
see \cite{ShaoSu:2006}; here $f$ is defined by $f(x):= -1.5 A$ for $x \leq z-A$, $f(x):=1.5 A$ for $x \geq z+2A$ and $f(x):=x-z-A/2$
in between. Now we apply \eqref{start} and get
$$
\E \bigl( (W-W')^2 I_{ \{0 \leq (W-W') \leq A\}}\, 1_{\{z \leq W \leq z+A\}} \bigr) \leq 2 \lambda \E(Wf(W)) + 2 \E(f(W)R) \leq 3A(\lambda + \E(|R|)),
$$
where we used $\E(|W|) \leq \sqrt{ \E(W^2)} \leq 1$. Similarly, we obtain
$$
U_2 \geq -  3A(\lambda + \E(R)).
$$
\end{proof}

\begin{remark}
In Theorem \ref{ourmain}, we assumed $\E(W^2) \leq 1$. Alternatively, let us assume that $\E(W^2)$ is finite. Then the
proof of Theorem \ref{ourmain} shows, that the third and the fourth summand of the bound \eqref{mainbound} change to
$$
\frac{A^3}{\lambda} \bigl( \frac{\sqrt{2 \pi}}{16} + \frac{\sqrt{ \E(W^2)}}{4} \bigr) + 1.5 A \, \E(|W|).
$$
\end{remark}

In the following corollary, we discuss the Kolmogorov-distance of the distribution of a random variable $W$ to a
random variable distributed according to $N(0, \sigma^2)$, the normal distribution with mean zero and variance $\sigma^2$.

\begin{cor} \label{corsigma}
Let $\sigma^2 >0$ and  $(W, W')$ be an exchangeable pair of real-valued random variables such that
\begin{equation} \label{3.2}
E(W' |W) = \bigl(1 - \frac{\lambda}{\sigma^2} \bigr) W +R
\end{equation}
for some random variable $R = R(W)$ and with $0 < \lambda <1$. Assume that $\E(W^2)$ is finite. Let $Z_{\sigma}$ be a random variable
distributed according to $N(0, \sigma^2)$. If $|W - W'| \leq A$ for a constant $A$, we obtain the bound
\begin{eqnarray} \label{mainbound2}
d_{\rm{K}}(W, Z_{\sigma}) & \leq & \sqrt{ \E \biggl( 1 - \frac{1}{2 \lambda}  \E[(W'-W)^2 |W] \biggr)^2}  +
\biggl( \frac{\sigma \sqrt{2 \pi}}{4} + 1.5 A \biggr) \frac{\sqrt{\E(R^2)}}{\lambda} \nonumber \\
& + & \frac{A^3}{\lambda} \biggl(\frac{\sqrt{2 \pi \sigma^2}}{16} + \frac{\sqrt{\E(W^2)}}{4} \biggr) +  1.5 A \sqrt{\E(W^2)}.
\end{eqnarray}
\end{cor}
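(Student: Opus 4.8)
The plan is to reduce the claim to the standard--normal case of Theorem~\ref{ourmain} by rescaling. Put $\widetilde W:=W/\sigma$ and $\widetilde W':=W'/\sigma$. Since $(W,W')$ is exchangeable and $\sigma>0$ is deterministic, $(\widetilde W,\widetilde W')$ is an exchangeable pair as well, and dividing \eqref{3.2} by $\sigma$ yields
$$
\E(\widetilde W'\mid\widetilde W)=\Bigl(1-\tfrac{\lambda}{\sigma^2}\Bigr)\widetilde W+\tfrac{R}{\sigma}
=(1-\widetilde\lambda)\,\widetilde W+\widetilde R,\qquad
\widetilde\lambda:=\tfrac{\lambda}{\sigma^2},\quad\widetilde R:=\tfrac{R}{\sigma},
$$
together with $|\widetilde W-\widetilde W'|=|W-W'|/\sigma\le A/\sigma=:\widetilde A$ and $\E(\widetilde W^2)=\E(W^2)/\sigma^2<\infty$. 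In the regime relevant for the applications $\lambda$ is small and $\sigma$ is bounded away from $0$, so $\widetilde\lambda\in(0,1)$; I shall assume this. Because $\E(\widetilde W^2)$ need not be $\le1$, I would apply not the main assertion of Theorem~\ref{ourmain} but the refinement stated in the remark in which the hypothesis $\E(W^2)\le1$ is relaxed to finiteness of $\E(W^2)$, and in which the last two summands of \eqref{mainbound} are accordingly replaced by terms containing $\sqrt{\E(\widetilde W^2)}$ and $\E|\widetilde W|$.

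Next I would use that the Kolmogorov distance is invariant under a common positive scaling: with $Z$ standard normal one has $Z_\sigma/\sigma\stackrel{d}{=}Z$, and for every $z\in\R$, $P(W\le z)=P(\widetilde W\le z/\sigma)$ and $P(Z_\sigma\le z)=P(Z\le z/\sigma)$, so that $d_{\rm K}(W,Z_\sigma)=d_{\rm K}(\widetilde W,Z)$. Applying the above refinement of Theorem~\ref{ourmain} to $(\widetilde W,\widetilde W')$ therefore bounds $d_{\rm K}(W,Z_\sigma)$, and it only remains to re-express the bound through $\lambda,R,A,\sigma$. Since $\tfrac1{2\widetilde\lambda}\E[(\widetilde W'-\widetilde W)^2\mid\widetilde W]=\tfrac1{2\lambda}\E[(W'-W)^2\mid W]$, the variance summand is unchanged; since $\sqrt{\E(\widetilde R^2)}/\widetilde\lambda=\sigma\sqrt{\E(R^2)}/\lambda$ and $\widetilde A=A/\sigma$, the remainder summand becomes $\bigl(\tfrac{\sigma\sqrt{2\pi}}4+1.5A\bigr)\tfrac{\sqrt{\E(R^2)}}\lambda$; and the two concentration summands are obtained in the same manner from $\widetilde A^3/\widetilde\lambda$, $\sqrt{\E(\widetilde W^2)}$ and $\E|\widetilde W|$ (using $\E|\widetilde W|\le\sqrt{\E(\widetilde W^2)}$ to absorb the modulus). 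This gives \eqref{mainbound2}.

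The argument involves no new idea; the only points requiring care are bookkeeping ones. First, one must invoke the finite--second--moment version of Theorem~\ref{ourmain}, because rescaling destroys the normalization $\E(W^2)\le1$. Second, one has to keep track of how the powers of $\sigma$ distribute among the four summands --- in particular of the cancellation by which the $\sigma^2$ hidden in $\widetilde\lambda=\lambda/\sigma^2$ offsets the factors $\sigma$ brought in by $\widetilde R=R/\sigma$ and $\widetilde A=A/\sigma$, so that the variance term reappears with coefficient exactly $1$ and the remainder term with coefficient $\tfrac{\sigma\sqrt{2\pi}}4+1.5A$. Equivalently, and with the same amount of work, one may bypass rescaling and repeat the proof of Theorem~\ref{ourmain} line by line with the Stein equation $\sigma^2 f_z'(x)-xf_z(x)=\1_{\{x\le z\}}-\Phi_\sigma(z)$ for $N(0,\sigma^2)$ in place of \eqref{steinnormal}, inserting the corresponding sup-norm bounds for $f_z$, $f_z'$ and the Lipschitz constant of $f_z'$, and with the concentration inequality \eqref{conz1} adapted so that the weight $\lambda+\E|R|$ is replaced by $\tfrac{\lambda}{\sigma^2}\sqrt{\E(W^2)}+\E|R|$.
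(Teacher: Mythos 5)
Your rescaling route---set $\widetilde W := W/\sigma$, $\widetilde W' := W'/\sigma$, apply the variant of Theorem~\ref{ourmain} noted in the ensuing remark (where $\E(W^2)\le 1$ is relaxed to $\E(W^2)<\infty$), and use $d_{\rm K}(W,Z_\sigma)=d_{\rm K}(\widetilde W,Z)$---is a clean and legitimate alternative to what the paper actually does. The paper instead argues directly with the $N(0,\sigma^2)$ Stein equation \eqref{stein7}, uses $f_{\sigma,z}(x)=\sigma f_z(x/\sigma)$ to get $0<f_\sigma<\sigma\sqrt{2\pi}/4$, $|f_\sigma'|\le 1$, $|f_\sigma'(x)-f_\sigma'(y)|\le 1$, and reruns the $T_1,T_2,T_3$ decomposition; this avoids your need to assume $\widetilde\lambda=\lambda/\sigma^2<1$, an issue you rightly flagged.

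However, your closing claim ``This gives \eqref{mainbound2}'' is not correct as stated. Tracking the substitution carefully: $\widetilde A^3/\widetilde\lambda = A^3/(\sigma\lambda)$, $\sqrt{\E(\widetilde W^2)}=\sqrt{\E(W^2)}/\sigma$, and $\E|\widetilde W|=\E|W|/\sigma$, so the third and fourth summands of your rescaled bound come out as
\begin{equation*}
\frac{A^3}{\lambda}\biggl(\frac{\sqrt{2\pi}}{16\,\sigma}+\frac{\sqrt{\E(W^2)}}{4\,\sigma^2}\biggr)+\frac{1.5\,A\,\E|W|}{\sigma^2},
\end{equation*}
which carry an extra factor $\sigma^{-2}$ compared with the last two summands displayed in \eqref{mainbound2}; the two coincide only at $\sigma=1$. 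Since $\sigma^2=(1-\beta)^{-1}>1$ in every application of this corollary, your bound is in fact the sharper one and does imply \eqref{mainbound2}, but this needs to be said rather than implicitly assumed---your argument does not yield \eqref{mainbound2} literally for $\sigma<1$. (Incidentally, if one re-derives the paper's own $U_1,U_2$ estimates for the $N(0,\sigma^2)$ Stein identity, the same $\sigma^{-2}$ factors appear; the constants in \eqref{mainbound2} seem to be written a little loosely.)
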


\begin{proof}
Let us denote by $f_{\sigma} := f_{\sigma,z}$ the solution of the Stein equation
\begin{equation} \label{stein7}
f_{\sigma,z}'(x) - \frac{x}{\sigma^2} f_{\sigma,z}(x) = 1_{\{x \leq z\}}(x) - F_{\sigma}(z)
\end{equation}
with $F_{\sigma}(z) := \frac{1}{\sqrt{2 \pi} \sigma} \int_{- \infty}^z \exp \bigl( - \frac{y^2}{2 \sigma^2} \bigr) \, dy$.
It is easy to see that the identity $f_{\sigma, z}(x) = \sigma f_{z} \bigl( \frac{x}{\sigma} \bigr)$, where $f_z$ is the solution
of the corresponding Stein equation of the standard normal distribution, holds true. Using \cite[Lemma 2.2]{ChenShao:2005} we obtain
$0 < f_{\sigma} (x) < \sigma \frac{\sqrt{2 \pi}}{4}$, $|f_{\sigma}'(x) | \leq 1$, and $|f_{\sigma}'(x) - f_{\sigma}'(y)| \leq 1$.
With \eqref{3.2} we arrive at
$$
P(W \leq z ) - F_{\sigma}(z) = T_1 + T_2 + T_3
$$
with $T_i$'s defined in \eqref{tis}. Using the bounds of $f_{\sigma}$ and $f_{\sigma}'$, the bound of $T_1$ is the
same as in the proof of Theorem \ref{ourmain}, whereas the bound of $T_2$ changes to
$$
|T_2| \leq \sigma \frac{\sqrt{2 \pi}}{4 \lambda} \sqrt{\E(R^2)}.
$$
Since we consider the case $|W-W'| \leq A$, we have to bound
$$
T_3 = -\frac{1}{2 \lambda}  \E \biggl( (W-W') 1_{\{|W-W'| \leq A\}} \int_{-(W-W')}^0 (f'(W+t)-f'(W)) dt \biggr).
$$
Using the Stein identity \eqref{stein7}, the mean value theorem as well as the concentration inequality-argument along the
lines of the proof of Theorem \ref{ourmain}, we obtain
$$
|T_3| \leq \frac{A^3}{\lambda} \bigl( \frac{\sqrt{\E(W^2)}}{4} + \frac{\sigma \sqrt{2 \pi}}{16} \bigr) + 1.5 A \bigl(
\sqrt{\E(W^2)} + \frac{\sqrt{\E(R^2)}}{\lambda} \bigr).
$$
Hence the corollary is proved.
\end{proof}

With \eqref{3.2} we obtain $\E(W-W')^2 = \frac{2 \lambda}{\sigma^2} \E(W^2) - 2 \E(W\, R)$. Therefore
\begin{equation} \label{hihi}
\E \biggl(1-  \frac{1}{2 \lambda}  \E[(W'-W)^2 |W] \biggr) = 1 - \frac{\E(W^2)}{\sigma^2} + \frac{\E(W \, R)}{\lambda},
\end{equation}
so that the bound in Corollary \ref{corsigma} is only useful when $\E(W^2)$ is close to $\sigma^2$ (and
$\E(W\,R)/ \lambda$ is small). An alternative
bound can be obtained comparing with a $N(0, \E(W^2))$-distribution.

\begin{cor} \label{corsigma2}
In the situation of Corollary \ref{corsigma}, let $Z_{W}$ denote the $N(0, \E(W^2))$ distribution. We obtain
\begin{eqnarray} \label{mainbound3}
d_{\rm{K}}(W, Z_{W}) & \leq & \frac{\sigma^2}{2 \lambda}  \bigl( {\rm Var} \bigl( \E[(W'-W)^2 |W] \bigr) \bigr)^{1/2}  +
\sigma^2 \biggl( \frac{\sqrt{\E(W^2)}\,  \sqrt{2 \pi}}{4} + 1.5 A \biggr) \frac{\sqrt{\E(R^2)}}{\lambda} \nonumber \\
& & \hspace{-2cm} + \sigma^2 \, \frac{A^3}{\lambda} \biggl(\frac{\sqrt{\E(W^2)} \, \sqrt{2 \pi}}{16} + \frac{\sqrt{\E(W^2)}}{4} \biggr)
+  \sigma^2 \, 1.5 A \, \sqrt{\E(W^2)} + \sigma^2 \frac{\sqrt{\E(W^2)} \, \sqrt{\E(R^2)}}{\lambda}.
\end{eqnarray}
\end{cor}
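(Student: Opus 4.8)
The plan is to recognise that Corollary \ref{corsigma2} is nothing but Corollary \ref{corsigma} specialised so that the target variance equals $\E(W^2)$, together with one extra orthogonality step that turns the $\sqrt{\E(\cdot)^2}$ appearing in \eqref{mainbound2} into a conditional variance. First I would set $\tau^2:=\E(W^2)$ and note that, since $1-\lambda/\sigma^2=1-(\lambda\tau^2/\sigma^2)/\tau^2$, the hypothesis \eqref{3.2} is exactly the hypothesis of Corollary \ref{corsigma} with $\sigma^2$ replaced by $\tau^2$ and $\lambda$ by $\lambda':=\lambda\tau^2/\sigma^2$ (in the regime of interest $\E(W^2)\le\sigma^2$, so $0<\lambda'<1$; where it is not, the asserted bound is vacuous). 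To sidestep even that caveat one simply reruns the short proof of Corollary \ref{corsigma} with the Stein solution $f=f_{\tau,z}$ of $f'(x)-\tau^{-2}xf(x)=\1_{\{x\le z\}}-F_\tau(z)$, whose bounds $0<f<\tau\sqrt{2\pi}/4$, $|f'|\le 1$, $|f'(x)-f'(y)|\le1$ are those of \cite[Lemma 2.2]{ChenShao:2005}, together with the identity read off from \eqref{3.2} exactly as in \eqref{start}--\eqref{tis},
\[
\E\bigl(Wf(W)\bigr)=\frac{\sigma^2}{2\lambda}\E\bigl((W-W')(f(W)-f(W'))\bigr)+\frac{\sigma^2}{\lambda}\E\bigl(f(W)R\bigr).
\]

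Inserting this into $P(W\le z)-F_\tau(z)=\E\bigl(f'(W)-\tau^{-2}Wf(W)\bigr)$ and splitting as in \eqref{tis} gives $P(W\le z)-F_\tau(z)=T_1+T_2+T_3$ with
\[
T_1=\E\Bigl(f'(W)\bigl(1-\tfrac{\sigma^2}{2\lambda\tau^2}(W-W')^2\bigr)\Bigr),\qquad
T_2=-\tfrac{\sigma^2}{\lambda\tau^2}\E\bigl(f(W)R\bigr),
\]
\[
T_3=-\tfrac{\sigma^2}{2\lambda\tau^2}\E\bigl[(W-W')\bigl(f(W)-f(W')-(W-W')f'(W)\bigr)\bigr].
\]
The estimates for $T_2$ (via $|f|<\tau\sqrt{2\pi}/4$ and Cauchy--Schwarz) and for $T_3$ (the concentration argument of Theorem \ref{ourmain}, now carrying the extra factor $\sigma^2/\tau^2$) are identical to those in the proof of Corollary \ref{corsigma}; after substituting $\tau^2=\E(W^2)$ they produce the second, third and fourth summands on the right of \eqref{mainbound3}, including the $1.5A\sqrt{\E(W^2)}$ contribution.

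The one genuinely new point is $T_1$. Because the comparison variance is now \emph{exactly} $\E(W^2)$, the analogue of \eqref{hihi} loses its "$1-\E(W^2)/\sigma^2$" part and reads $\E\bigl(1-\tfrac{\sigma^2}{2\lambda\tau^2}\E[(W-W')^2\mid W]\bigr)=\tfrac{\sigma^2}{\lambda\tau^2}\E(WR)$. Hence $1-\tfrac{\sigma^2}{2\lambda\tau^2}\E[(W-W')^2\mid W]$ is the constant $\tfrac{\sigma^2}{\lambda\tau^2}\E(WR)$ plus a mean‑zero term, so by orthogonality its second moment equals $\bigl(\tfrac{\sigma^2}{\lambda\tau^2}\E(WR)\bigr)^2+\tfrac{\sigma^4}{4\lambda^2\tau^4}{\rm Var}\bigl(\E[(W-W')^2\mid W]\bigr)$; with $|f'|\le1$ and $|\E(WR)|\le\sqrt{\E(W^2)}\sqrt{\E(R^2)}$ this yields
\[
|T_1|\le \frac{\sigma^2}{2\lambda\tau^2}\bigl({\rm Var}(\E[(W-W')^2\mid W])\bigr)^{1/2}+\frac{\sigma^2}{\lambda\tau^2}\sqrt{\E(W^2)}\,\sqrt{\E(R^2)}.
\]
Substituting $\tau^2=\E(W^2)$, the first summand is the leading term of \eqref{mainbound3} and the second its last term; collecting $T_1,T_2,T_3$ gives \eqref{mainbound3}.

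There is no conceptual obstacle here — all the substance lives in Theorem \ref{ourmain} and Corollary \ref{corsigma}. The main (minor) point requiring care is the last displayed step: isolating the mean $\tfrac{\sigma^2}{\lambda\tau^2}\E(WR)$ of $1-\tfrac{1}{2\lambda}\E[(W-W')^2\mid W]$ — which, unlike the variance‑$\sigma^2$ case, is genuinely nonzero because of the remainder $R$ — from its fluctuation, so that the $\sqrt{\E(\cdot)^2}$ of Corollary \ref{corsigma} becomes a ${\rm Var}^{1/2}$; the rest is routine bookkeeping of the $\sigma^2$, $\E(W^2)$ and $\lambda$ factors forced by changing the comparison law from $N(0,\sigma^2)$ to $N(0,\E(W^2))$.
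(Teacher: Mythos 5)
Your proof is correct and takes essentially the paper's route: replace the target variance by $\E(W^2)$ in the Stein equation, apply the exchangeable-pair identity \eqref{3.2}, and isolate the mean (proportional to $\E(WR)/\lambda$) of $1-\tfrac{\sigma^2}{2\lambda\E(W^2)}\E[(W-W')^2|W]$ so that the remainder is controlled by Cauchy--Schwarz as $\mathrm{Var}^{1/2}\bigl(\E[(W-W')^2|W]\bigr)$, producing the new first and last terms of \eqref{mainbound3}. The only (inessential) presentational difference is that the paper additively splits this quantity into its mean and a centered part before applying Cauchy--Schwarz, as in \eqref{tistis}, rather than computing $\E X^2=\mu_X^2+\mathrm{Var}(X)$ and then invoking $\sqrt{a+b}\le\sqrt a+\sqrt b$; the two estimates agree up to a constant.
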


\begin{proof}
With \eqref{hihi} we get $\E(W^2)= \sigma^2 \bigl( \frac{1}{2 \lambda} ( E(W-W')^2 + 2 \E(W \, R)) \bigr)$.
With the definition of $T_2$ and $T_3$ as in \eqref{tis} we obtain
\begin{eqnarray} \label{tistis}
\E \bigl( \E(W^2) f'(W) - W f(W) \bigr) & = & \sigma^2 \E \biggl( \frac{\E(W-W')^2 + 2 \E(W \, R)}{2 \lambda}  f'(W) \biggr) - \E( W \, f(W)) \nonumber \\
& & \hspace{-5cm}  = \sigma^2 \E \biggl( f'(W) \biggl( \frac{\E(W-W')^2 - \E[(W-W')^2|W]}{2 \lambda} \biggr) \biggr)
+ \sigma^2 (T_2 + T_3) + \sigma^2 \frac{\E(W \, R)}{\lambda}.
\end{eqnarray}
Remark that now $\sigma^2$ in \eqref{3.2} is a parameter of the exchangeable-pair identity and no longer the parameter
of the limiting distribution. We apply \eqref{stein7} and exchange every $\sigma^2$ in \eqref{stein7} with $\E(W^2)$.
Applying Cauchy-Schwarz to the first summand and bounding the other terms as in the proof of Corollary \ref{corsigma} leads
to the result.
\end{proof}

\section{Berry-Esseen bounds for the classical Curie-Weiss model}

Let $\varrho$ be the symmetric Bernoulli measure and $0 < \beta <1$. Then
$$
W := W_n := \frac{1}{\sqrt{n}} \sum_{i=1}^n X_i.
$$
converges in distribution to a $N(0, \sigma^2)$ with $\sigma^2 = (1 - \beta)^{-1}$:

\begin{proof}[Proof of Theorem~\ref{CW}]
We consider the usual construction of an exchangeable pair. We produce a spin collection $X'= (X_i')_{i \geq 1}$ via
a {\it Gibbs sampling} procedure: select a  coordinate, say $i$, at random and replace $X_i$ by $X_i'$ drawn from the
conditional distribution of the $i$'th coordinate given $(X_j)_{j \not= i}$. Let $I$ be a random variable taking values $1, 2, \ldots, n$
with equal probability, and independent of all other random variables. Consider
$$
W' := W - \frac{X_I}{\sqrt{n}} +  \frac{X_I'}{\sqrt{n}} = \frac{1}{\sqrt{n}} \sum_{j \not= I} X_j + \frac{X_I'}{\sqrt{n}}.
$$
Hence $(W,W')$ is an exchangeable pair and
$$
W-W' = \frac{X_I - X_I'}{\sqrt{n}}.
$$
Let $\mathcal{F} := \sigma(X_1, \ldots, X_n)$.
Now we obtain
$$
\E[W-W'| \mathcal{F}] = \frac{1}{\sqrt{n}} \frac 1n \sum_{i=1}^n \E[ X_i - X_i' | \mathcal{F}] = \frac{1}{n} \, W -
\frac{1}{\sqrt{n}} \frac 1n \sum_{i=1}^n  \E[ X_i'|\mathcal{F}].
$$
The conditional distribution at site $i$ is given by
$$
P_n \bigl( x_i | (x_j)_{j \not= i} \bigr) = \frac{ \exp \bigl( x_i \, \beta \, m_i(x) \bigr)}{\exp \bigl( \beta m_i(x) \bigr) + \exp \bigl( - \beta m_i(x) \bigr)},
$$
with
$$
m_i(x) := \frac 1n \sum_{j \not= i} x_j, \,\, i=1, \ldots, n.
$$
It follows that
$$
\E[X_i' | \mathcal{F}] = \E[ X_i | (X_j)_{j \not= i}] = \tanh (\beta m_i(X)).
$$
Now
$\frac{1}{\sqrt{n}} \frac 1n \sum_{i=1}^n  \tanh (\beta m_i(X)) =
\frac{1}{\sqrt{n}} \frac 1n \sum_{i=1}^n \bigl(  \tanh (\beta m_i(X)) -  \tanh (\beta m(X)) \bigr) +
\frac{1}{\sqrt{n}}  \tanh (\beta m(X)) =: R_1 + R_2$
with $m(X) := \frac 1n \sum_{i=1}^n X_i$. Taylor-expansion $\tanh(x) = x + \mathcal{O}(x^3)$
leads to
\begin{equation*}
R_2 =\frac{1}{\sqrt{n}} \beta m(X) + \frac{1}{\sqrt{n}} \mathcal{O} \bigl( m(X)^3 \bigr)
= \frac{\beta}{n} W + \mathcal{O} \bigl( \frac{W^3}{n^2} \bigr).
\end{equation*}
Hence
\begin{equation} \label{exid1}
\E[W-W'|W] = \frac{1-\beta}{n} \, W + R = \frac{\lambda}{\sigma^2} \, W + R
\end{equation}
with $\lambda := \frac 1n$, $\sigma^2 := (1-\beta)^{-1}$ and
$R := \mathcal{O} \bigl( \frac{W^3}{n^2} \bigr) - R_1$. Since $|W-W'| = \bigl| \frac{X_I - X_I'}{\sqrt{n}} \bigr| \leq \frac{1}{\sqrt{n}} =:A$,
we are able to apply Corollary \ref{corsigma}. From Lemma \ref{momentsgeneral}
we know that for $\varrho$ being the symmetric Bernoulli distribution and for $0 < \beta < 1$ we have
$\E (W^4) \leq \rm{const.}$.
Applying this it follows that the fourth term in \eqref{mainbound2} can be bounded by
$1.5 A \frac{\sqrt{ \E(W^2)}}{\sigma^2} \leq \frac{(1-\beta) \rm{const.}}{\sqrt{n}}$,
and the third summand in \eqref{mainbound2} can be estimated as follows:
$$
\frac{A^3}{\lambda} \biggl( \frac{\sqrt{2 \pi}}{16} \sqrt{(1-\beta)} + \frac{\rm{const.}}{4} (1-\beta) \biggr) \leq \frac{1}{\sqrt{n}} \sqrt{(1-\beta)} \rm{const.}.
$$
Moreover we obtain $\E|R| \leq \E |R_1| + \mathcal{O}  \bigl( \frac{\E|W^3|}{n^2} \bigr)$.
Since $\tanh(x)$ is 1-Lipschitz we obtain $|R_1| \leq \frac{1}{\sqrt{n}} |m_i(X) - m(X)| \leq \frac{1}{n^{3/2}}$.
Therefore, with Lemma \ref{momentsgeneral}, we get $\E |R| = \mathcal{O} \bigl( \frac{1}{n^{3/2}} \bigr)$
and thus, the second summand in \eqref{mainbound2} can be bounded by
$$
\rm{const.} \biggl( \frac{\sqrt{ 2 \pi}}{4 \sqrt{(1-\beta)}} + 1.5 \frac{1}{\sqrt{n}} \biggr) \frac{1}{\sqrt{n}} = \mathcal{O} \bigl(  \frac{1}{\sqrt{n}} \bigr).
$$
To bound the first summand in \eqref{mainbound2}, we obtain
$(W-W')^2 = \frac{X_I^2}{n} - \frac{2 X_I \, X_I'}{n} + \frac{X_I'}{n}$.
Hence
$$
\E \bigl[ (W-W')^2 | \mathcal{F} \bigr] = \frac{2}{n} - \frac{2}{n^2} \sum_{i=1}^n X_i \, \tanh (\beta m_i(X)),
$$
and therefore
\begin{eqnarray*}
1 - \frac{1}{2 \lambda} \E \bigl[ (W-W')^2 | \mathcal{F} \bigr] & = & \frac 1n \sum_{i=1}^n X_i \, \tanh (\beta m_i(X)) \\
& = & \frac 1n \sum_{i=1}^n X_i \bigl( \tanh (\beta m_i(X)) - \tanh (\beta m(X)) \bigr) + m(X) \,  \tanh (\beta m(X)) \\
& =:& R_1 + R_2.
\end{eqnarray*}
By Taylor expansion we get
$R_2 = \frac{\beta}{n} W^2 + \mathcal{O} \bigl( \frac{W^4}{n^2} \bigr)$
and using Lemma \ref{momentsgeneral} we obtain $\E |R_2| = \mathcal{O} (n^{-1})$. Since $\tanh(x)$
is 1-Lipschitz we obtain $|R_1| \leq \frac 1n$. Hence
$\E |R_1 + R_2| = \mathcal{O}(n^{-1})$
and Theorem \ref{CW} is proved.
\end{proof}
\medskip

Now we discuss the critical case $\beta=1$, when $\varrho$ is the symmetric Bernoulli distribution.
For $\beta=1$, using the Taylor expansion $\tanh(x) = x - x^3/3 + \mathcal{O}(x^5)$, \eqref{exid1} would lead to
$$
\E [W-W'| W] = \frac{W^3}{3} \frac{1}{n^2} + \tilde{R}
$$
for some $\tilde{R}$. Hence it is no longer possible to apply Corollary \ref{corsigma}.
Moreover the prefactor $\lambda:=\frac{1}{n^2}$ would give growing bounds. In other words, the criticality of
the temperature value $1 / \beta_c =1$ can also be recognized by Stein's method. We already know that at the critical value,
the sum of the spin-variables has to be rescaled. Let us now define
\begin{equation} \label{w2}
W := \frac{1}{n^{3/4}} \sum_{i=1}^n X_i.
\end{equation}
Constructing the exchangeable pair $(W,W')$ in the same manner as before we will obtain
\begin{equation} \label{beta1}
\E[W-W' | W] = \frac{1}{n^{3/2}} \, \frac{W^3}{3} + R(W) =: -\lambda \psi(W) +R(W).
\end{equation}
with $\lambda=\frac{1}{n^{3/2}}$ and a reminder $R(W)$ presented later. Considering the density $p(x) = C \, \exp(-x^4/12)$, we have
$$
\frac{p'(x)}{p(x)}= \psi(x).
$$
This is the starting point for developing Stein's method for limiting distributions with a regular Lebesgue-density $p(\cdot)$
and an exchangeable pair $(W, W')$ which satisfies the condition
$$
\E [W-W'|W] = - \lambda \psi(W) + R(W) = -\lambda \frac{p'(W)}{p(W)} +R(W)
$$
with $0 < \lambda <1$. To prove \eqref{beta1}, observe that
$$
\E[W-W' | W] = \frac 1n W - \frac{1}{n^{3/4}} \frac 1n \sum_{i=1}^n \tanh( m_i(X)).
$$
By Taylor expansion and the identity $m_i(X) = m(X) - \frac{X_i}{n}$ we obtain
$$
\frac{1}{n^{3/4}} \frac 1n \sum_{i=1}^n \tanh( m_i(X)) = \frac 1n W - \frac{1}{n^{3/2}} \frac{W^3}{3} - R(W)
$$
with $R(W)$ such that $\E|R(W)| = \mathcal{O}(n^{-2})$.
The exact form of $R(W)$ will be presented in Section 5.

\section{The exchangeable pair approach for distributional approximations}
Motivated by the classical Curie-Weiss model at the critical temperature, we will develop Stein's method with the help
of exchangeable pairs as follows.
For a rather large class of continuous distributions, the Stein characterization was introduced in \cite{DiaconisStein:2004},
following the lines of \cite[Chapter 6]{Stein:1986}. The densities occurring as limit laws in models of statistical mechanics
belong to this class. Let $I$ be a real interval, where $-\infty \leq a < b \leq \infty$. A function is called {\it regular}
if $f$ is finite on $I$ and, at any interior point of $I$, $f$ possesses a right-hand limit and a left-hand limit. Further, $f$ possesses
a right-hand limit $f(a+)$ at the point $a$ and a left-hand limit $f(b-)$ at the point $b$.

Let us assume, that the regular density $p$ satisfies the following condition:
\medskip

{\bf Assumption (D)}
Let $p$ be a regular, strictly positive density on an interval $I=[a,b]$. Suppose $p$ has a derivative $p'$ that is
regular on $I$ and has only countably many sign changes and being continuous at the sign changes. Suppose moreover
that $\int_I p(x) | \log(p(x))| \, dx < \infty$
and assume that
\begin{equation} \label{psi}
\psi(x) := \frac{p'(x)}{p(x)}
\end{equation}
is regular.
\medskip

In \cite[Proposition]{DiaconisStein:2004} it is proved, that a random variable $Z$ is distributed according to the density $p$ if and only if
$$
\E \bigl( f'(Z) + \psi(Z) \, f(Z) \bigr) = f(b-) \, p(b-) - f(a+) \, p(a+)
$$
for a suitably chosen class $\mathcal{F}$ of functions $f$. The proof is integration by parts.
The corresponding Stein identity is
\begin{equation} \label{steinid2}
f'(x) + \psi(x) \, f(x) = h(x) - P(h),
\end{equation}
where $h$ is a measurable function for which $\int_I |h(x)|\, p(x) \, dx < \infty$, $P(x) := \int_{-\infty}^x p(y) \, dy$
and $P(h) := \int_I h(y) \, p(y)\, dy$.
The solution $f:=f_h$ of this differential equation is given by
\begin{equation} \label{solution}
f(x) = \frac{ \int_a^x \bigl( h(y) - Ph) \, p(y) \, dy}{p(x)}.
\end{equation}
For the function $h(x) := 1_{\{x \leq z\}}(x)$ let $f_z$ be the corresponding solution of \eqref{steinid2}.
We will make the following assumptions:
\medskip

{\bf Assumption (B1)}
Let $p$ be a density fulfilling Assumption (D). 
We assume that for any absolute continuous function $h$, the solution $f_h$ of \eqref{steinid2}
satisfies
$$
\| f_h \| \leq c_1 \|h'\|, \quad \|f_h'\| \leq c_2 \|h'\| \quad \text{and} \quad \|f_h''(x)\|  \leq c_3 \|h'\|,
$$
where $c_1, c_2$ and $c_3$ are constants.
\medskip

{\bf Assumption (B2)}
Let $p$ be a density fulfilling Assumption (D) 
We assume that the solution $f_z$ of
\begin{equation} \label{steinid3}
f_z'(x) + \psi(x) \, f_z(x) = 1_{\{x \leq z\}}(x) - P(z)
\end{equation}
satisfies
$$
|f_z(x)| \leq d_1, \quad |f_z'(x)| \leq d_2 \quad \text{and} \quad
|f_z'(x)-f_z'(y) | \leq d_3
$$
and
\begin{equation} \label{addcond}
|(\psi(x) \, f_z(x))'| = \bigl| ( \frac{p'(x)}{p(x)} \, f_z(x))' \bigr| \leq d_4
\end{equation}
for all real $x$ and $y$, where $d_1, d_2, d_3$ and $d_4$ are constants.
\medskip

At first glance, Condition \eqref{addcond} seem to be a rather strong or at least a
rather technical condition.

\begin{remark}
In the case of the normal approximation, $\psi(x) = -x$,
we have to bound $ (x f_z(x))'$ for the solution $f_z$ of the classical Stein equation. But it is
easy to observe that $|(x f_z'(x))'| \leq 2$ by direct calculation (see \cite[Proof of Lemma 6.5]{ChenShao:2005}).
However, in the normal approximation case, this bound would lead to a worse Berry-Esseen constant (compare Theorem \ref{ourmain}
with Theorem \ref{generaldensity}). Hence in this case we only use $d_2=d_3=1$ and $d_1 = \sqrt{2 \pi}/4$.
\end{remark}

We will see, that for all distributions appearing as limit laws in our class of Curie-Weiss models,
Condition \eqref{addcond} can be proved:

\begin{lemma} \label{genbound}
The densities $f_{k, \mu, \beta}$ in \eqref{densitysigma} and \eqref{densitygen} and the densities
in Theorem \ref{CWbetadep}, Theorem \ref{CWgeneral} and Theorem \ref{CWgendep} satisfy Assumptions (D), (B1) and
(B2).
\end{lemma}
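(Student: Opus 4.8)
The plan is to verify Assumptions (D), (B1), and (B2) for each density on the list, reducing everything to estimates on the Stein solution $f_h$ from the explicit formula \eqref{solution}. All relevant densities have the form $p(x) \propto \exp(-Q(x))$ where $Q$ is a polynomial that is either (i) $Q(x) = x^2/(2\sigma^2)$ (the Gaussian case, $k=1$), or (ii) $Q(x) = c\,x^{2k}$ with $c>0$ and $k\ge 2$, or (iii) a perturbed even polynomial $Q(x) = c_1 x^{2k} - c_2 x^2$ with leading coefficient $c_1>0$ (the size-dependent cases in Theorems \ref{CWbetadep}, \ref{CWgeneral}, \ref{CWgendep}). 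In every case $\psi(x) = -Q'(x)$ is a polynomial, hence regular, $p$ is smooth and strictly positive, $p'=\psi\,p$ is regular, $\psi$ changes sign only finitely often, and $\int_I p|\log p|\,dx<\infty$ because $p$ has faster-than-polynomial decay; this disposes of Assumption (D). The Gaussian case is already covered by \cite[Lemma 2.2]{ChenShao:2005} together with the direct bound on $(\psi f_z)'=-(x f_z(x))'$ noted in the remark preceding the lemma, so the real content is cases (ii) and (iii).

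First I would record the standard representation: for $h$ with $\|h'\|<\infty$, writing $\bar h = h - P(h)$, one has $f_h(x) = \frac{1}{p(x)}\int_a^x \bar h(y)p(y)\,dy = -\frac{1}{p(x)}\int_x^b \bar h(y)p(y)\,dy$, and since $\int_I \bar h\,p = 0$ one may replace $\bar h(y)$ by $\bar h(y) - \bar h(x) = \int_x^y h'$ in the numerator, giving $|f_h(x)| \le \|h'\|\,\frac{1}{p(x)}\int_{-\infty}^{x} |y-x|\,p(y)\,dy$ and a symmetric bound from the other side. The task is then to show that $\frac{1}{p(x)}\int_{-\infty}^x |y-x|p(y)\,dy$ is bounded uniformly in $x$ (and similarly $\frac{1}{p(x)}\int_x^\infty(y-x)p(y)\,dy$), which yields $\|f_h\|\le c_1\|h'\|$. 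For large $|x|$ this follows from a Laplace/integration-by-parts estimate: since $Q$ is a polynomial with positive even leading term, $\int_{-\infty}^x p(y)\,dy \sim p(x)/|Q'(x)|$ and the extra factor $|y-x|$ contributes a further $1/|Q'(x)|$, both of which are $\mathcal{O}(1)$ as $|x|\to\infty$ (indeed $\to 0$); on compact sets the ratio is manifestly bounded by positivity and continuity of $p$. For $\|f_h'\|$ and $\|f_h''\|$ one differentiates \eqref{steinid2}: $f_h' = h - P(h) - \psi f_h$ and $f_h'' = h' - \psi' f_h - \psi f_h'$, so the bounds on $f_h$ together with $\|h\|\le\|h'\|\cdot\mathrm{const}$ (after centering) would give $\|f_h'\|,\|f_h''\|\lesssim\|h'\|$ provided one controls the unbounded factors $\psi f_h$ and $\psi f_h'$ — which is exactly where the decay of $f_h$ against the growth of the polynomial $\psi$ must be used, via the sharper tail asymptotic $f_h(x) = \mathcal{O}(1/|Q'(x)|)$ for $|x|$ large. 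This establishes (B1); for (B2) one runs the same estimates with the \emph{non}-smooth choice $h(x)=1_{\{x\le z\}}$, where the centered integrand is bounded by $1$, giving $|f_z|\le d_1$ directly, and then $f_z' = 1_{\{x\le z\}} - P(z) - \psi f_z$ gives $|f_z'|\le d_2$ and $|f_z'(x)-f_z'(y)|\le d_3$ after noting the jump of size $\le 1$ plus the Lipschitz control of $\psi f_z$; finally \eqref{addcond} is the statement $|(\psi f_z)'| = |\psi' f_z + \psi f_z'|\le d_4$, which again reduces to the tail decay $\psi(x)f_z(x), \psi(x)f_z'(x) = \mathcal O(1)$.

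The main obstacle is the uniform boundedness of $\psi(x) f_z(x)$ (and $\psi f_z'$) as $|x|\to\infty$ when $\psi$ is a polynomial of degree $2k-1\ge 3$: one needs the precise asymptotic $\int_x^\infty p(y)\,dy = p(x)\bigl(\tfrac{1}{Q'(x)} + \mathcal O(\tfrac{Q''(x)}{Q'(x)^3})\bigr)$, obtained by repeated integration by parts $\int_x^\infty e^{-Q} = \int_x^\infty \tfrac{1}{Q'}\,(Q' e^{-Q}) = \tfrac{e^{-Q(x)}}{Q'(x)} - \int_x^\infty (\tfrac{1}{Q'})' e^{-Q}$, and the analogous two-sided bound carrying the weight $|y-x|$. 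In the perturbed case (iii) one must additionally check that the subtracted $-c_2 x^2$ term, and the $n$-dependence of the coefficients $c_W^{-1}$ etc., do not spoil uniformity; here it is enough that the leading coefficient stays bounded away from $0$ and $\infty$ and the lower-order coefficients stay bounded — precisely the regime guaranteed by the hypotheses of Theorems \ref{CWbetadep}, \ref{CWgeneral}, \ref{CWgendep} (so that, after the Hubbard--Stratonovich transformation of Lemma \ref{hubbard} and the Taylor expansion \eqref{Taylor}, the strength $\mu$ and the perturbation $\gamma$ enter only through bounded quantities). Granting these asymptotics, all of $c_1,\dots,c_3$ and $d_1,\dots,d_4$ are produced as finite constants depending only on $k$ and the (bounded) coefficients of $Q$, which is exactly the claim of Lemma \ref{genbound}.
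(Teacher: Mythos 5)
Your overall route — explicit Stein solution, tail asymptotics $\int_x^\infty p \sim p(x)/|Q'(x)|$ via integration by parts, and the observation that the whole difficulty is controlling $\psi f_z$ and $\psi f_z'$ at infinity — is the same one the paper takes. The paper likewise specializes to the prototype $p_k(x) = b_k e^{-a_k x^{2k}}$, derives two-sided tail bounds \eqref{ungl1}--\eqref{ungl3b} exactly in the spirit of your integration-by-parts identity, and uses them to show $|2k a_k x^{2k-1} f_z(x)| \le 1$ and $|(-\psi f_z)'| \le (2k-1)/|x|$ for large $|x|$; it then relegates the perturbed $Q$'s to the sketch following Remark~\ref{gibbsrem}. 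So on the strategic level this is not a different proof.

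Two steps in your write-up are not quite right as stated, though both are repairable in the direction the paper actually goes. First, the replacement of $\bar h(y)$ by $\bar h(y) - \bar h(x) = \int_x^y h'$ inside $\int_a^x \bar h\, p$ does \emph{not} leave $f_h$ unchanged: the identity $\int_I \bar h\,p = 0$ lets you switch between $\int_a^x$ and $-\int_x^b$, but subtracting the constant $\bar h(x)$ produces the extra term $\bar h(x) P(x)/p(x)$, which is not zero. The paper avoids this by using the Chen--Shao representation $h(x) - Ph = \int_{-\infty}^x h'P - \int_x^\infty h'(1-P)$, which gives the two-piece formula for $f_h$ on which the $f_h''$ estimate rests. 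Second, the claim ``$\|h\|\le \|h'\|\cdot\mathrm{const}$ after centering'' is false (take $h(x) = x$, which is already centered for symmetric $p$ but unbounded); what is true, and what the paper uses, is $|\bar h(x)| \le \|h'\|(2|x| + 2\E|Z|)$, whose linear growth is then beaten by the $1/|x|$ decay of $(\psi f_z)'$. Finally, to get the sharp constants $d_2 = d_3 = 1$ the paper does not argue abstractly from the Stein identity plus a Lipschitz bound on $\psi f_z$ (which, as you present it, is somewhat circular); it shows $-\psi f_z$ is \emph{monotone increasing} with limits $P(z)-1$ and $P(z)$ at $\mp\infty$, deduces $f_z' > 0$ on $x<z$ and $f_z' < 0$ on $x>z$, and reads off $|f_z'|\le 1$ and $|f_z'(x)-f_z'(y)|\le 1$ from the endpoint values. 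Your argument would still produce \emph{some} finite $d_2, d_3$, which suffices for the lemma, but the monotonicity step is the cleanest way to avoid hand-waving there.
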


\begin{proof}
We defer the proofs to the appendix, since they only involve careful analysis.
\end{proof}

\begin{remark} \label{gibbsrem}
With respect to all densities which appear as limiting distributions in our theorems, we restrict
ourselves to bound solutions (and its derivatives) of the corresponding Stein equation characterizing 
distributions with probability densities $p$ of the form $b_k \exp(-a_k x^{2k})$. Along the lines of the 
proof of Lemma \ref{genbound}, one would be
able to present good bounds (in the sense that Assumption (B1) and (B2) are fulfilled) 
even for measures with a probability density of the form
\begin{equation} \label{gibbs}
p(x)  = b_k \exp \bigl( -a_k V(x) \bigr),
\end{equation}
where $V$ is even, twice continuously differentiable, unbounded above at infinity, $V'\not= 0$ and 
$V'$ and $1/V'$  are increasing on $[0,\infty)$. Moreover one has to assume that $\frac{V''(x)}{|V'(x)|}$ can be bounded by a constant 
for $x \geq d$ with some $d \in \R_+$. We sketch the proof in the appendix. It is remarkable, that this class of measures is a subclass of measures which
are GHS, see Section 7. A measure with density $p$ in \eqref{gibbs} is usually called a Gibbs measure. Stein's method for discrete Gibbs measures
is developed in \cite{Eichelsbacher/Reinert:2008}. Our remark might be of use applying Stein's method for some
continuous Gibbs measure approximation.
\end{remark}

\begin{remark}
In the case of comparing with an {\it exponential distribution} with parameter $\mu$, it is easy to see, that Assumption (D) and (B2)
is fulfilled, see \cite[Example 1.6]{DiaconisStein:2004} for (D) and \cite[Lemma 2.1]{Chatterjee/Fulman/Roellin:2009} for (B2).
We have $\psi(x) = - \mu$ and $\|f_z\| \leq 1$, $\|f_z'\| \leq 1$ and $\sup_{x,y \geq 0} |f_z'(x) - f_z'(y)| \leq 1$. Thus
$|(\psi(x) f_z(x))'| = \mu |f_z'(x)| \leq \mu$.
\end{remark}

\begin{remark}
From \eqref{solution} we obtain
$$
f_z(x) = \frac{(1-P(z)) \, P(x)}{p(x)} \quad \text{for} \quad x \leq z
$$
and
$$
f_z(x) = \frac{P(z)(1-P(x))}{p(x)} \quad \text{for} \quad x \geq z.
$$
Hence
$$
\psi(x) \, f_z(x) = \frac{P(x) \, p'(x)}{p^2(x)} (1-P(z)) \quad \text{for} \quad x \leq z
$$
and
$$
\psi(x) \, f_z(x) = \frac{(1-P(x)) \, p'(x)}{p^2(x)} (P(z)) \quad \text{for} \quad x \geq z.
$$
Therefore one has to bound the derivative of
$$
 \frac{P(x) \, p'(x)}{p^2(x)} \quad \text{and} \quad  \frac{(1-P(x)) \, p'(x)}{p^2(x)},
$$
respectively, to check Condition \eqref{addcond}.
\end{remark}




The following result is a refinement of Stein's result \cite{Stein:1986} for exchangeable pairs.

\begin{theorem} \label{generaldensity}
Let $p$ be a density fulfilling Assumption (D).
Let $(W, W')$ be an exchangeable pair of real-valued random variables such that
\begin{equation} \label{exchangepsi}
\E [W'|W] = W + \lambda \psi(W) - R(W)
\end{equation}
for some random variable $R=R(W)$,  $0 < \lambda < 1$ and $\psi$ defined
in \eqref{psi}. Then
\begin{equation} \label{zweitesmoment}
\E (W-W')^2 = - 2 \lambda \E[W \psi(W)] + 2 \E[W \, R(W)].
\end{equation}
We obtain the following assertions:
\begin{enumerate}
\item Let $Z$ be a random variable distributed according to $p$.
Under Assumption (B1), for any uniformly Lipschitz function $h$, we obtain
$$
| \E h(W) - \E h(Z) | \leq \delta \|h'\|
$$
with
$$
\delta: = c_2 \E \biggl|  1 - \frac{1}{2 \lambda} \E \bigl( (W-W')^2| W \bigr) \biggr| + \frac{c_3}{4 \lambda} \E |W-W'|^3 + \frac{c_1}{\lambda}
\sqrt{ \E (R^2)}.
$$
\item  Let $Z$ be a random variable distributed according to $p$. Under Assumption (B2), we obtain for any $A >0$
\begin{eqnarray} \label{kolall}
d_{\rm{K}}(W, Z) & \leq & d_2 \sqrt{ \E \biggl( 1 - \frac{1}{2 \lambda}  \E[(W'-W)^2 |W] \biggr)^2} +
\big( d_1 + \frac 32 A \bigr)  \frac{\sqrt{\E(R^2)}}{\lambda} \nonumber \\
& + &  \frac{1}{\lambda} \bigl( \frac{d_4 A^3}{4} \bigr) + \frac{3A}{2} \E (|\psi(W)|)
 +   \frac{d_3}{2 \lambda} \E \bigl( (W-W')^2 1_{\{|W-W'| \geq A\}} \bigr).
\end{eqnarray}
\end{enumerate}
\end{theorem}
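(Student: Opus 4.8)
The plan is to mimic the structure of the proof of Theorem \ref{ourmain}, replacing the Gaussian Stein equation \eqref{steinnormal} by the general equation \eqref{steinid3} and using the bounds from Assumptions (B1) and (B2) in place of the explicit Gaussian bounds of \cite[Lemma 2.2]{ChenShao:2005}. First I would establish \eqref{zweitesmoment}: apply the exchangeability identity $\E((W-W')(f(W)+f(W')))=0$ with $f(x)=x$, expand, and use \eqref{exchangepsi} to get $\E(W-W')^2 = 2\E(W(W-W'))$ and then $\E[W-W'|W] = -\lambda\psi(W)+R(W)$; this is a two-line computation.

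For part (1), I would take $f=f_h$ the solution \eqref{solution} of the Stein equation \eqref{steinid2}, so that $\E h(W)-\E h(Z) = \E(f_h'(W)+\psi(W)f_h(W))$. The key algebraic step is to rewrite $\E(\psi(W)f_h(W))$ using \eqref{exchangepsi}: from $0 = \E((W-W')(f_h(W)+f_h(W')))$ one extracts $\lambda\E(\psi(W)f_h(W)) = \tfrac12\E((W-W')(f_h(W)-f_h(W'))) - \E(f_h(W)R(W))$, hence
\begin{equation*}
\E h(W)-\E h(Z) = \E\Bigl(f_h'(W)\bigl(1-\tfrac{1}{2\lambda}(W-W')^2\bigr)\Bigr) + \tfrac{1}{2\lambda}\E\bigl[(W-W')\bigl(f_h(W)-f_h(W')+(W-W')f_h'(W)\bigr)\bigr] - \tfrac1\lambda\E(f_h(W)R(W)).
\end{equation*}
The first term is bounded by $c_2\|h'\|\,\E|1-\tfrac{1}{2\lambda}\E((W-W')^2|W)|$ after conditioning and using $\|f_h'\|\le c_2\|h'\|$; the second, via a Taylor expansion $f_h(W')-f_h(W)-(W'-W)f_h'(W) = \int_0^{W'-W}(W'-W-s)f_h''(W+s)\,ds$ and $\|f_h''\|\le c_3\|h'\|$, is bounded by $\tfrac{c_3}{4\lambda}\|h'\|\,\E|W-W'|^3$; the third by $\tfrac{c_1}{\lambda}\|h'\|\sqrt{\E(R^2)}$ via Cauchy--Schwarz. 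Summing gives $\delta$.

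For part (2), I would run the concentration-inequality argument of \cite{ShaoSu:2006} essentially verbatim, as in the proof of Theorem \ref{ourmain}, but carrying the constants $d_1,d_2,d_3,d_4$ from Assumption (B2). Write $P(W\le z)-P(z) = \E(f_z'(W)+\psi(W)f_z(W))$ with $f_z$ solving \eqref{steinid3}, split off the $1-\tfrac{1}{2\lambda}(W-W')^2$ term (bounded by $d_2\sqrt{\E(1-\tfrac{1}{2\lambda}\E((W-W')^2|W))^2}$ using $|f_z'|\le d_2$), the remainder term $\tfrac1\lambda\E(f_z(W)R(W))$ handled with $|f_z|\le d_1$, plus the extra $\tfrac{3A}{2\lambda}\sqrt{\E(R^2)}$ contribution coming from the concentration step. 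The core term $T_3$ is expanded as in \eqref{t3}: the large-jump part $1_{\{|W-W'|\ge A\}}$ gives $\tfrac{d_3}{2\lambda}\E((W-W')^2 1_{\{|W-W'|\ge A\}})$ using $|f_z'(x)-f_z'(y)|\le d_3$; the small-jump part is treated via the Stein identity \eqref{steinid3}, the mean value theorem, and the concentration inequality \eqref{conz1}. Here is the one place where the general $\psi$ matters: in the step that in the Gaussian case used $(W+t)f(W+t)-Wf(W)$, one now has $\psi(W+t)f_z(W+t)-\psi(W)f_z(W)$, which is controlled by $\|(\psi f_z)'\|\le d_4$ — this is exactly why Condition \eqref{addcond} was imposed — contributing the $\tfrac{d_4A^3}{4\lambda}$ term, while the linear-in-$t$ remainder contributes $\tfrac{3A}{2}\E|\psi(W)|$. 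Collecting all pieces yields \eqref{kolall}.

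The main obstacle is the small-jump part of $T_3$ in part (2): one must verify that the substitution of $\psi(W)f_z(W)$ for $Wf_z(W)$ goes through cleanly with the concentration estimate \eqref{conz1}, i.e. that the mean-value-theorem decomposition of $\psi(W+t)f_z(W+t)-\psi(W)f_z(W)$ produces only the two controllable pieces (a $d_4$-bounded piece and a piece linear in $t$ weighted by $\psi(W)$), and that \eqref{conz1} itself — whose proof uses the exchangeable-pair identity \eqref{start} with a piecewise-linear test function — remains valid under the weaker regression \eqref{exchangepsi} with the nonlinear drift $\psi$. Everything else is a bookkeeping adaptation of the already-presented arguments.
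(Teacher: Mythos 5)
Your proposal follows the same architecture as the paper's proof: the exchangeability identity with general $\psi$, the decomposition into $T_1,T_2,T_3$, the Taylor/kernel treatment in part (1) under Assumption (B1), and the concentration-inequality argument in part (2) under Assumption (B2), with the $d_4$ bound on $(\psi f_z)'$ handling the $\psi(W+t)f_z(W+t)-\psi(W)f_z(W)$ piece. One small mis-attribution: in the paper the entire $U_1 := \E\bigl((W-W')1_{\{|W-W'|\le A\}}\int_{-(W-W')}^0(-\psi(W+t)f(W+t)+\psi(W)f(W))\,dt\bigr)$ term is absorbed by $d_4$ alone (giving $d_4A^3/(4\lambda)$, with no further ``linear-in-$t$ weighted by $\psi(W)$'' piece), and the $\tfrac{3A}{2}\E|\psi(W)|$ contribution arises from the indicator part $U_2$, where the concentration inequality (re-derived with the antisymmetric identity for $\psi$, i.e.\ $\E\bigl((W-W')(f(W)-f(W'))\bigr)=2\E(fR)-2\lambda\E(\psi(W)f(W))$ with the clamped piecewise-linear $f$) gives $|U_2|\le 3A(\E|R|+\lambda\E|\psi(W)|)$ — precisely the obstacle you flagged at the end.
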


With \eqref{zweitesmoment} we obtain
$$
\E \biggl( 1 - \frac{1}{2\lambda} \E[(W-W')^2|W] \biggr) = 1 + \E[W \psi(W) ] - \frac{\E(W \, R)}{\lambda}.
$$
Therefore the bounds in Theorem \ref{generaldensity} are unlikely to be useful unless $-\E[W \psi(W)]$ is close to 1 and $ \frac{\E(W \, R)}{\lambda}$
is small. Alternatively bounds can be obtained comparing not with a distribution given by $p$ but with a modification
which involves $\E[W \psi(W)]$. Let $p_W$ be a probability density such that a random variable $Z$ is distributed according to $p_W$ if and only
if
$$
\E \bigl( \E[W \psi(W)] \, f'(Z) + \psi(Z) \, f(Z) \bigr) =0
$$
for a suitably chosen class of functions.

\begin{theorem} \label{generaldensity2}
Let $p$ be a density fulfilling Assumption (D).
Let $(W, W')$ be an exchangeable pair of real-valued random variables such that \eqref{exchangepsi} holds.
If $Z_W$ is a random variable distributed according to $p_W$, we obtain under
(B1), for any uniformly Lipschitz function $h$ that $| \E h(W) - \E h(Z_W) | \leq \delta' \|h'\|$
with
$$
\delta': = \frac{c_2}{2 \lambda} \bigl( {\rm Var} \bigl( \E[(W-W')^2| W] \bigr) \bigr)^{1/2}
+ \frac{c_3}{4 \lambda} \E |W-W'|^3 + \frac{c_1+c_2 \sqrt{\E(W^2)}}{\lambda}
\sqrt{ \E (R^2)}.
$$
Under Assumption (B2) we obtain for any $A >0$
\begin{eqnarray} \label{kolall2}
d_{\rm{K}}(W, Z_W) & \leq & \frac{d_2}{2 \lambda} \bigl( {\rm Var} \bigl( \E [ (W-W')^2| W] \bigr)^{1/2} +
\big( d_1 + d_2 \sqrt{\E(W^2)} +\frac 32 A \bigr)  \frac{\sqrt{\E(R^2)}}{\lambda} \nonumber \\
& + &  \frac{1}{\lambda} \bigl( \frac{d_4 A^3}{4} \bigr) + \frac{3A}{2} \E (|\psi(W)|)
 +   \frac{d_3}{2 \lambda} \E \bigl( (W-W')^2 1_{\{|W-W'| \geq A\}} \bigr).
\end{eqnarray}
\end{theorem}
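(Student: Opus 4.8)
The plan is to run the proof of Theorem~\ref{generaldensity} essentially verbatim, the one extra ingredient being the recentering device already used to pass from Corollary~\ref{corsigma} to Corollary~\ref{corsigma2}; I only indicate the changes. First I would read off the parameter of the target law from the pair: by \eqref{zweitesmoment}, $m:=-\E[W\psi(W)]=\tfrac1{2\lambda}\bigl(\E(W-W')^2-2\E[W R(W)]\bigr)$, so $\E[W\psi(W)]$, which defines $p_W$, is directly accessible; and for the densities occurring in Theorems~\ref{CWgeneral} and~\ref{CWgendep} — all of the form $b\exp(-a x^{2k})$ — the recentered density $p_W$ has the same form, so by Lemma~\ref{genbound} Assumptions~(B1) and~(B2) hold for $p_W$ with constants $c_1,c_2,c_3$ (resp.\ $d_1,\dots,d_4$) that are bounded in $n$. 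Let $f=f_h$ (resp.\ $f=f_z$) denote the solution of the Stein equation \eqref{steinid2} (resp.\ \eqref{steinid3}) attached to $p_W$.

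Next I would reproduce the decomposition from the proof of Theorem~\ref{generaldensity}. The $p_W$-Stein equation gives $m\bigl(\E h(W)-\E h(Z_W)\bigr)=\E\bigl(m f'(W)+\psi(W)f(W)\bigr)$; inserting \eqref{exchangepsi} and using the exchangeability identity $\E[(W-W')(f(W)+f(W'))]=0$, the right-hand side becomes the sum of $m\,\E f'(W)-\tfrac1{2\lambda}\E\bigl[(W-W')^2 f'(W)\bigr]$, the Taylor remainder $-\tfrac1{2\lambda}\E\bigl[(W-W')\bigl(f(W)-f(W')-(W-W')f'(W)\bigr)\bigr]$, and $\tfrac1\lambda\E\bigl(f(W)R(W)\bigr)$. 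Substituting $m=\tfrac1{2\lambda}\bigl(\E(W-W')^2-2\E[WR]\bigr)$ in the first group collapses it to
$$\tfrac1{2\lambda}\,\E\bigl(f'(W)\bigl(\E(W-W')^2-\E[(W-W')^2\mid W]\bigr)\bigr)-\tfrac{\E[WR]}{\lambda}\,\E f'(W);$$
the argument of the first expectation has mean zero, so Cauchy--Schwarz together with $\|f'\|\le c_2\|h'\|$ (resp.\ $|f_z'|\le d_2$) bounds it by $\tfrac{c_2\|h'\|}{2\lambda}\bigl({\rm Var}\,\E[(W-W')^2\mid W]\bigr)^{1/2}$, while $|\E[WR]|\le\sqrt{\E(W^2)}\sqrt{\E(R^2)}$ yields the extra summand $c_2\sqrt{\E(W^2)}\,\lambda^{-1}\sqrt{\E(R^2)}$ present in $\delta'$ and in \eqref{kolall2}. (Equivalently, this is the statement that the non-random defect $1+\E[W\psi(W)]$ between the Stein operators of $p$ and of $p_W$ is exactly cancelled by moving the target from $p$ to $p_W$, compare \eqref{tistis}.) Dividing by $m$ at the end — a bounded factor, absorbed into the constants — returns the bound on $\E h(W)-\E h(Z_W)$.

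The remaining two terms are treated exactly as in Theorem~\ref{generaldensity}. The term $\tfrac1\lambda\E\bigl(f(W)R(W)\bigr)$ is bounded by Cauchy--Schwarz and the sup-bound on $f$, contributing $\tfrac{c_1}{\lambda}\|h'\|\sqrt{\E(R^2)}$ under~(B1) and $\tfrac{d_1}{\lambda}\sqrt{\E(R^2)}$ under~(B2) (the $\tfrac32 A\,\lambda^{-1}\sqrt{\E(R^2)}$ part being generated by the truncation in the concentration step). For the Taylor remainder, under~(B1) I would use $\|f''\|\le c_3\|h'\|$ to get $\tfrac{c_3}{4\lambda}\|h'\|\,\E|W-W'|^3$; under~(B2) I would run the concentration-inequality argument from the proof of Theorem~\ref{ourmain}, splitting at $|W-W'|=A$, invoking the $p_W$-Stein identity \eqref{steinid3} and the auxiliary bound \eqref{addcond} ($|(\psi f_z)'|\le d_4$) to produce $\tfrac{d_4 A^3}{4\lambda}+\tfrac{3A}{2}\E|\psi(W)|+\tfrac{d_3}{2\lambda}\E\bigl((W-W')^2 1_{\{|W-W'|\ge A\}}\bigr)$. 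Summing the estimates gives $\delta'$ and \eqref{kolall2}. I expect the only genuinely delicate point to be the recentering bookkeeping: checking that $m\,\E f'(W)-\tfrac1{2\lambda}\E[(W-W')^2 f'(W)]$ really does reduce, via \eqref{zweitesmoment}, to a mean-zero term plus the explicit $R$-correction, and that the solution bounds of~(B1) and~(B2) transfer to $p_W$ with constants independent of $n$ — which for the densities $b_k\exp(-a_k x^{2k})$ arising in our theorems is exactly the content of Lemma~\ref{genbound}.
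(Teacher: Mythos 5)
Your proposal is correct and follows essentially the same route as the paper's proof: the same substitution of $m=-\E[W\psi(W)]$ via \eqref{zweitesmoment} into the Stein operator for $p_W$, the same split into a centered $f'$-term (handled by Cauchy--Schwarz), the $\E[WR]\,\E f'(W)$ correction, the $\frac{1}{\lambda}\E(f(W)R)$ term, and the Taylor remainder $T_3$, after which the (B1)/(B2) bounds and the concentration argument are applied exactly as in Theorem~\ref{generaldensity}. You are in fact somewhat more explicit than the paper about the normalization by $m$ and about the transfer of the solution bounds from the Stein equation for $p$ to the one for $p_W$, both of which the paper leaves implicit.
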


\begin{proof}[Proof of Theorem \ref{generaldensity}]
Interestingly enough, the proof is a quite simple adaption of the results in \cite{Stein:1986} and follows the
lines of the proof of Theorem \ref{ourmain}. For a function $f$ with $|f(x)| \leq C(1 +|x|)$ we obtain
\begin{eqnarray} \label{start2}
0 & = & \E \bigl( (W-W')(f(W')+f(W)) \bigr) \nonumber \\
& = & \E \bigl( (W-W')(f(W')-f(W)) \bigr) - 2 \lambda \E(\psi(W) \, f(W)) + 2 \E(f(W) \, R(W)),
\end{eqnarray}
which is equivalent to
\begin{equation} \label{start3}
\E(\psi(W) \, f(W)) = - \frac{1}{2 \lambda} \E \bigl( (W-W')(f(W)-f(W')) \bigr) + \frac{1}{\lambda}  \E(f(W) \, R(W))
\end{equation}

{\bf Proof of (1):}
Now let $f=f_h$ be the solution of the Stein equation \eqref{steinid2}, and define
$$
\widehat{K}(t) := (W-W') \bigl( 1_{\{-(W-W') \leq t \leq 0 \}} - 1_{\{0 < t \leq -(W-W') \}} \bigr) \geq 0.
$$
By \eqref{start3}, following the calculations on page 21 in \cite{ChenShao:2005}, we simply obtain
\begin{eqnarray*}
|\E h(W) - \E h(Z) | & = & |\E \bigl( f'(W) + \psi(W) \, f(W) \bigr) |\\
& = & \bigl| \E \bigl( f'(W) \biggl( 1 - \frac{1}{2 \lambda} (W-W')^2 \biggr) + \frac{1}{2 \lambda} \E \biggl( \int_{\R} (f'(W) - f'(W+t)) \, \widehat{K}(t) \,
dt \biggr)\\
& + & \frac{1}{\lambda} \E (f(W) R(W)) \bigr|.
\end{eqnarray*}
Using $\int_{\R} |t| \widehat{K}(t) \, dt = \frac 12 \E |W-W'|^3$, the bounds in Assumption (B1) give:
\begin{equation}
|\E h(W) - \E h(Z) | \leq \|h'\| \biggl( c_2 \E \biggl|  1 - \frac{1}{2 \lambda} \E \bigl( (W-W')^2| W \bigr) \biggr| + \frac{c_3}{4 \lambda} \E |W-W'|^3 + \frac{c_1}{\lambda} \sqrt{ \E (R^2)} \biggr).
\end{equation}

{\bf Proof of (2):}
Now let $f=f_z$ be the solution of the Stein equation \eqref{steinid3}. As in \eqref{tis}, using \eqref{start3}, we obtain
\begin{eqnarray*}
P(W \leq z) - P(z) & = & \E (f'(W) + \psi(W) f(W)) \\
& =&  \E \biggl( f'(W) \bigl( 1 - \frac{1}{2 \lambda} (W-W')^2 \bigr) \biggr) + \frac{1}{2 \lambda} \E (2 f(W) \, R)\\
& & - \frac{1}{2 \lambda} \E \biggl[ (W-W') \bigl( f(W) - f(W') - (W-W')f'(W) \bigr) \biggr]\\
& = & T_1 + T_2 + T_3.
\end{eqnarray*}
Now the bounds in Assumption (B2) give
$$
|T_1| \leq d_2 \sqrt{ \E \biggl( 1 - \frac{1}{\lambda}  \E[(W'-W)^2 |W] \biggr)^2}.
$$
and
$$
|T_2| \leq \frac{d_1}{\lambda} \sqrt{ \E(R^2)}.
$$
Using the decomposition \eqref{t3} of $(-2 \lambda) \, T_3$,
the modulus of the first term can be bounded by $d_3 \, \E \bigl( (W-W')^2 1_{\{|W-W'| >A\}} \bigr)$.
Using the Stein identity \eqref{steinid3}, the second summand can be represented
as
\begin{eqnarray*}
& & \E \biggl( (W-W') 1_{\{|W-W'| \leq A\}} \int_{-(W-W')}^0 \bigl( -\psi(W+t) \, f(W+t) + \psi(W) \, f(W) \bigr) dt \biggr) \\
& & + \E \biggl( (W-W') 1_{\{|W-W'| \leq A\}} \int_{-(W-W')}^0 (1_{\{W+t \leq z\}} - 1_{\{W \leq z\}}) dt \biggr) =: U_1 + U_2.
\end{eqnarray*}
With $g(x) := (\psi(x) f(x))'$ we obtain
$$
-\psi(W+t) \, f(W+t) + \psi(W) \, f(W) = - \int_0^t g(W+s) \, ds.
$$
Since $|g(x)| \leq d_4$ we obtain $|U_1| \leq \frac{A^3}{2} d_4$.

Analogously to the steps in the proof of Theorem \ref{ourmain}, $U_2$ can be bounded by
$$
\E \bigl( (W-W')(f(W)-f(W')) \bigr) = 2 \E \bigl( f(W) \, R(W) \bigr) - 2 \lambda \, \E \bigl( \psi(W) \, f(W) \bigr),
$$
where we applied \eqref{start3}, and where
$f$ is defined by $f(x):= -1.5 A$ for $x \leq z-A$, $f(x):=1.5 A$ for $x \geq z+2A$ and $f(x):=x-z-A/2$
in between.
Thus $U_2 \leq 3A \bigl( \E(|R|) + \lambda \E (|\psi(W)|) \bigr)$. Similarly we obtain $U_2 \geq - 3A \bigl( \E(|R|) + \lambda \E (|\psi(W)|) \bigr)$.
\end{proof}

\begin{proof}[Proof of Theorem \ref{generaldensity2}]
The main observation is the following identity:
\begin{eqnarray*}
\E \bigl( -\E[W \psi(W)] \, f'(W) + \psi(W) f(W) \bigr) & = & \E \biggl( f'(W) \biggl( \frac{\E[(W-W')^2] - 2 \E[W \, R]}{2 \lambda} \biggr) \biggr) +
\E \bigl( \psi(W) \, f(W) \bigr) \nonumber \\
& & \hspace{-6cm} = \E \biggl( f'(W) \biggl( \frac{\E[(W-W')^2] - \E[(W-W')^2|W]}{2\lambda} \biggr) \biggr) +\frac{1}{\lambda} \biggl(\E[f(W) \, R] - \E[\E(W R) \, f'(W)] \biggr) +T_3
\end{eqnarray*}
with $T_3$ defined as in the proof of Theorem \ref{generaldensity}. Now we can apply the Cauchy-Schwarz inequality to get
$$
\E \bigl| \E[(W-W')^2] - \E[(W-W')^2|W] \bigr| \leq \bigl( {\rm Var} \bigl( \E[(W-W')^2|W] \bigr) \bigr)^{1/2}.
$$
Now the proof follows the lines of the proof of Theorem \ref{generaldensity}.
%
\end{proof}

\begin{remark} \label{alternative}
We discuss an alternative bound in Theorem \ref{generaldensity} in the case that $(\psi(x) f_z(x))'$ cannot be bounded uniformly.
By the mean value theorem we obtain in general
$$
-\psi(W+t) f(W+t) + \psi(W) f(W) = \psi(W) \bigl( - \int_0^1 f'(W+st) t ds \bigr) + f(W+t) \bigl( - \int_0^1 \psi'(W+st) t ds \bigr).
$$
This gives
$$
|-\psi(W+t) f(W+t) + \psi(W) f(W)| \leq d_2 |\psi(W)| |t| + d_1 \int_0^1 |\psi'(W+st)| |t| ds.
$$
Now we get the bound
$$
\frac{1}{2 \lambda} |U_1| \leq \frac{d_2 A^3}{4 \lambda} \E(|\psi(W)|) + \frac{d_1}{2 \lambda} \E(V)
$$
with
$$
V := \biggl( |W-W'| \, 1_{\{|W-W'| \leq A\}} \int_{-(W-W')}^0 \int_0^1 |\psi'(W+st)| \, |t| ds \, dt \biggr).
$$
Let us consider the example $\psi(x) = - x^3/3$. Now
$$
\psi'(W+st)| = |(W+st)^2| = | W^2 + 2 st W + s^2t^2|,
$$
hence
$$
|(W+st)^2| |t| \leq |t| |W^2| + |t^2| 2 |W| |s| + |t^3| |s^2|
$$
and integration over $s$ gives
$$
\int_0^1 | \psi'(W+st)| |t| ds \leq |t| |W^2| + |t^2| |W| + |t^3| /3.
$$
Integration over $t$ leads to
$$
\int_{-(W-W')}^0 ( \int_0^1 | \psi'(W+st)| |t| ds) \leq \frac{|\Delta|^2}{2} |W^2| + \frac{|\Delta|^3}{3} |W| + \frac{|\Delta|^4}{12}.
$$
with $\Delta := (W-W')$. Hence we get
$$
\E(V)  \leq \biggl( \frac{A^3}{2} \E |W^2| + \frac{A^4}{3} \E |W| + \frac{A^5}{12} \biggr).
$$
We will see in Section 5, that this bound is good enough for an alternative proof of Theorem \ref{CWcritical}.
\end{remark}

\section{Berry-Esseen bound at the critical temperature}
\begin{proof}[Proof of Theorem \ref{CWcritical}]
We start with \eqref{beta1}, where $W$ is given by \eqref{w2}.  We will calculate the remainder term $R(W)$ more carefully:
By Taylor expansion and the identities $m_i(X) = m(X) - X_i/n$ and $m(X) = \frac{1}{n^{1/4}} W$ we obtain
$$
\frac{1}{n^{3/4}} \frac 1n \sum_{i=1}^n \tanh ( m_i(X)) = \frac 1n W - \frac{1}{n^{3/2}} \frac{W^3}{3} - \mathcal{O} \bigl(\frac{W}{n^2} \bigr) + \mathcal{O}
\bigl( \frac{W^3}{n^{5/2}} \bigr) + \mathcal{O} \bigl( S(W) \bigr)
$$
with
$$
S(W) = \frac{1}{n^{3/4}} \frac 1n \sum_{i=1}^n m_i(X)^5 = \mathcal{O} \bigl( \frac{W^5}{n^2} \bigr) +  \mathcal{O} \bigl( \frac{W^3}{n^{7/2}} \bigr) +
 \mathcal{O} \bigl( \frac{W^2}{n^{21/4}} \bigr) +  \mathcal{O} \bigl( \frac{W}{n^6} \bigr).
$$
From Lemma \ref{momentsgeneral} we know that for $\varrho$ being the symmetric Bernoulli distribution and $\beta=1$ we get
$\E |W|^6 \leq {\rm const.}$.
Using this we get the exchangeable pair identity \eqref{beta1} with
$R(W) = \mathcal{O} \bigl( \frac{1}{n^2} \bigr)$.
With Lemma \ref{genbound}, we can now apply Theorem \ref{generaldensity}, using
$|W - W'| \leq \frac{1}{n^{3/4}}=:A$.
We obtain $1.5 A \, \E(|\psi(W)|) \leq \rm{const.} \frac{1}{n^{3/4}}$ and  $\frac{d_4 \, A^3}{4 \lambda}  = \frac{d_4}{4} \frac{1}{n^{3/4}}$.
Using $\E |R(W)| \leq \rm{const.} \frac{1}{n^2}$ we get
$$
\bigl( d_1 + \frac 32 A \bigr) \frac{\E|R(W)|}{\lambda} \leq \rm{const.} \frac{1}{\sqrt{n}}.
$$
Moreover we obtain
$$
\E \bigl[ (W-W')^2 | \mathcal{F} \bigr] = \frac{2}{n^{3/2}} - \frac{2}{n^{5/2}} \sum_{i=1}^n X_i \tanh (m_i(X)).
$$
Hence applying Theorem \ref{generaldensity} we have to bound the expectation of
$$
T := \bigl| \frac 1n \sum_{i=1}^n X_i \tanh (m_i(X)) \bigr|.
$$
Again using Taylor and $m_i(X) = m(X) - \frac{X_i}{n}$ and Lemma \ref{momentsgeneral}, the leading term of $T$ is $\frac{W^2}{n^{1/2}}$.
Hence $\E(T) = \mathcal{O}(n^{-1/2})$ and Theorem \ref{CWcritical} is proved.
\end{proof}

\begin{remark} In Remark \ref{alternative}, we presented an alternative bound via Stein's method without proving
a uniform bound for $(\psi'(x) f_z(x))'$. As we can see, the additional terms in this bound are of smaller order than $\mathcal{O}(n^{-1/2})$,
using $A = n^{-3/4}$.
\end{remark}

\begin{proof}[Proof of Theorem \ref{CWbetadep}]
(1) Let $\beta_n -1 = \frac{\gamma}{\sqrt{n}}$ and $W= S_n/n^{3/4}$.
For the distribution function $F_{\gamma}$ in Theorem \ref{CWbetadep} we obtain $\psi(x) = \gamma \, x - \frac 13 x^3$. Moreover
we have
\begin{equation} \label{sehrhuebsch}
\E[W-W'|W] = \frac{1 - \beta_n}{n} W + \frac{\beta_n^3}{n^{3/2}} \frac{W^3}{3} + R(\beta_n, W)
\end{equation}
with $R(\beta_n, W) = \mathcal{O}(n^{-2})$. With $\beta_n-1 = \frac{\gamma}{\sqrt{n}}$ we obtain
$$
\E[W-W'|W] = -\frac{\gamma}{n^{3/2}} W + \frac{\beta_n^3}{n^{3/2}} \frac{W^3}{3} + R(\beta_n, W) = - \frac{1}{n^{3/2}} \psi(W) + \tilde{R}(\beta_n, W)
$$
with $\tilde{R}(\beta_n, W) = \mathcal{O}(n^{-2})$. Now we only have to adapt the proof of Theorem \ref{CWcritical} step
by step, using, that the sixth moment of $W$ is bounded for varying $\beta_n$, see Lemma \ref{momentsgeneral}. Hence by
Lemma \ref{genbound} and Theorem \ref{generaldensity}, part (1) is proved.

(2): we consider the case $|\beta_n -1| = \mathcal{O}(n^{-1})$ and $W= S_n/ n^{3/4}$.
Now in \eqref{sehrhuebsch}, the term $ \frac{1 - \beta_n}{n} W$ will be a part of the remainder:
$$
\E[W-W'|W] = \frac{\beta_n^3}{n^{3/2}} \frac{W^3}{3} + R(\beta_n, W) + \frac{1 - \beta_n}{n} W =: -\frac{\beta_n^3}{n^{3/2}} \psi(W) + \hat{R}(\beta_n, W)
$$
with $\psi(x) := x^3/3$.
Along the lines of the proof of Theorem \ref{CWcritical}, we have to bound $\frac{\E|\hat{R}|}{\lambda}$ with $\lambda = \frac{\beta_n^3}{n^{3/2}}$.
But since by assumption
$$
\lim_{n \to \infty} \frac{1}{\lambda} \frac{(1-\beta_n)}{n} = \frac{\sqrt{n}(1-\beta_n)}{\beta_n^3} = 0,
$$
applying Theorem \ref{generaldensity}, we obtain the convergence in distribution for any $\beta_n$ with  $|\beta_n -1| \ll n^{-1/2}$, and we obtain the Berry-Esseen bound
of order $\mathcal{O}(1 / \sqrt{n})$ for any $|\beta_n -1| = \mathcal{O} (n^{-1})$.

(3) Finally we consider $|\beta_n -1| \gg n^{-1/2}$ and $W= \sqrt{\frac{(1-\beta_n)}{n}} S_n$.
Now we obtain
$$
\E[W-W'|W] = \frac{1 - \beta_n}{n} W + \frac{\beta_n \, W}{n^2} + \frac{\beta_n^3}{n^2(1-\beta_n)} \frac{W^3}{3}
+ R(\beta_n, W) =: -\lambda \psi(W) + \tilde{R}(\beta_n,W)
$$
with $\lambda = \frac{(1-\beta_n)}{n}$ and $\psi(x) = -x$. We apply Corollary \ref{corsigma}: with $A= \frac{1}{\sqrt{n}}(1-\beta_n)^{1/2}$,
one obtains $\lambda^{-1} A^3 = n^{-1/2} (1-\beta_n)^{1/2}$ and
$$
\frac{\E|\tilde{R}(\beta_n,W)|}{\lambda} \leq \frac{{\rm const}}{n(1-\beta_n)^2}.
$$
Moreover
$$
\E[(W-W')^2|W] = \frac{2(1-\beta_n)}{n} - \frac{2(1-\beta_n)}{n} \frac 1n \sum_{i=1}^n X_i \tanh \bigl( \beta_n m_i(X) \bigr).
$$
Hence
$$
\biggl| 1 - \frac{1}{2 \lambda} \E[(W-W')^2|W] \biggr| = \biggl| \frac{\beta_n}{n (1- \beta_n)} W^2 - \frac{\beta_n}{n} - \frac{\beta_n^3}{n^2(1-\beta_n)^2}
\frac{W^4}{3} + R(\beta_n, W) \biggr| = \mathcal{O} \bigl( \frac{\beta_n}{n(1-\beta_n)} \bigr).
$$
Hence with $|\beta_n -1| \gg n^{-1/2}$ we obtain convergence in distribution. Under the additional assumption
$|\beta_n -1| \gg n^{-1/4}$ we obtain the Berry-Esseen result.
\end{proof}

\section{Proof of the general case}
\begin{proof}[Proof of Theorem \ref{CWgeneral}]
Given $\varrho$ which satisfies the GHS-inequality and let $\alpha$ be the global minimum of type $k$ and strength $\mu(\alpha)$
of $G_{\varrho}$. In case $k=1$ it is known that the random variable $\frac{S_n}{\sqrt{n}}$ converges in distribution to
a normal distribution $N(0, \sigma^2)$ with $\sigma^2 = \mu(\alpha)^{-1} - \beta^{-1}= (\sigma_{\varrho}^{-2} - \beta)^{-1}$, see
for example \cite[V.13.15]{Ellis:LargeDeviations}.
Hence in this case we will apply Corollary \ref{corsigma2} (to obtain better constants for our Berry-Esseen bound in comparison to Theorem
\ref{generaldensity2}).

Consider $k \geq 1$. We just treat the case $\alpha=0$ and denote $\mu = \mu(0)$. The more general case
can be done analogously.
For $k=1$, we consider $\psi(x) = - \frac{x}{\sigma^2}$ with $\sigma^2 = \mu^{-1} - \beta^{-1}$. For any $k \geq 2$
we consider
$$
\psi(x) = - \frac{\mu}{(2k-1)!} x^{2k-1}.
$$
We define
$$
W := W_{k,n} := \frac{1}{n^{1 - 1/(2k)}} \sum_{i=1}^n X_i
$$
and $W'$, constructed as in Section 3, such that
$$
W-W' = \frac{X_I - X_I'}{n^{1 - 1/(2k)}}.
$$
We obtain
$$
\E[W-W'| \mathcal{F}] = \frac 1n W - \frac{1}{n^{1 - 1/(2k)}} \frac 1n \sum_{i=1}^n \E(X_i' | \mathcal{F}).
$$
Now we have to calculate the conditional distribution at site $i$ in the general case:

\begin{lemma} \label{CWidentitygeneral}
In the situation of Theorem \ref{CWgeneral}, if $X_1$ is $\varrho$-a.s. bounded, we obtain
$$
\E (X_i' | \mathcal{F}) = \bigl( m_i(X) - \frac{1}{\beta} G_{\varrho}'(\beta, m_i(X)) \bigr) \, \bigl(1 + \mathcal{O}(1/n) \bigr)
$$
with $m_i(X) := \frac 1n \sum_{j \not= i} X_j = m(X) - \frac{X_i}{n}$.
\end{lemma}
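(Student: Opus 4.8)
The plan is to compute the conditional law of $X_i$ directly from \eqref{CWmeasure} and then discard the Gaussian-type factor $\exp(\beta x^2/(2n))$ using the boundedness hypothesis. First I would observe that under $P_{n,\beta}$, conditionally on $(X_j)_{j\neq i}$, the law of $X_i$ has $\varrho$-density proportional to $\exp\bigl(\frac{\beta}{2n}(x+\sum_{j\neq i}X_j)^2\bigr)$. Expanding the square, the $x$-independent factor $\exp(\frac{\beta n}{2}m_i(X)^2)$ cancels between numerator and denominator, leaving the conditional density proportional to $\exp\bigl(\frac{\beta}{2n}x^2+\beta m_i(X)\,x\bigr)\,d\varrho(x)$. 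Hence
$$
\E(X_i'\mid\mathcal{F})=\frac{\int x\,\exp\bigl(\frac{\beta}{2n}x^2+\beta m_i(X)x\bigr)\,d\varrho(x)}{\int \exp\bigl(\frac{\beta}{2n}x^2+\beta m_i(X)x\bigr)\,d\varrho(x)} .
$$

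Next I would remove the factor $\exp(\beta x^2/(2n))$. Let $M$ be a $\varrho$-a.s. bound for $|X_1|$; then $|x|\le M$ on $\supp\varrho$ and $|m_i(X)|\le M$, so $\exp(\beta x^2/(2n))=1+\theta_n(x)$ with $\sup_{|x|\le M}|\theta_n(x)|\le C(M,\beta)/n$. Moreover $\int\exp(\beta m_i(X)x)\,d\varrho(x)\ge e^{-\beta M^2}>0$ is bounded away from zero, uniformly in $X$, and the corresponding numerator integral is bounded in modulus by $Me^{\beta M^2}$. Writing numerator and denominator each as a main term plus an error of size $\mathcal{O}(1/n)$ and using the elementary identity $\frac{N_0+E_1}{D_0+E_2}-\frac{N_0}{D_0}=\frac{E_1D_0-N_0E_2}{D_0(D_0+E_2)}$, one obtains
$$
\E(X_i'\mid\mathcal{F})=\frac{\int x\,e^{\beta m_i(X)x}\,d\varrho(x)}{\int e^{\beta m_i(X)x}\,d\varrho(x)}+\mathcal{O}(1/n)=\phi_\varrho'(\beta m_i(X))+\mathcal{O}(1/n),
$$
where the $\mathcal{O}(1/n)$ is uniform in $X$ and in $i$.

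Finally I would rewrite $\phi_\varrho'$ through $G_\varrho$. From \eqref{Gdef}, $G_\varrho(\beta,s)=\frac{\beta s^2}{2}-\phi_\varrho(\beta s)$, so $G_\varrho'(\beta,s)=\beta s-\beta\phi_\varrho'(\beta s)$, i.e. $\phi_\varrho'(\beta s)=s-\frac1\beta G_\varrho'(\beta,s)$. Taking $s=m_i(X)$ gives $\E(X_i'\mid\mathcal{F})=m_i(X)-\frac1\beta G_\varrho'(\beta,m_i(X))+\mathcal{O}(1/n)$; since $m_i(X)-\frac1\beta G_\varrho'(\beta,m_i(X))=\phi_\varrho'(\beta m_i(X))$ stays in a compact set as $m_i(X)$ ranges over $[-M,M]$, the additive error is absorbed into a multiplicative factor $(1+\mathcal{O}(1/n))$, which is the asserted identity. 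The only delicate point is that all the $\mathcal{O}(1/n)$ bounds must be uniform over the admissible values of $m_i(X)$; this is precisely where $\varrho$-a.s. boundedness of $X_1$ enters, since without a compact range for both $x$ and $m_i(X)$ neither $\exp(\beta x^2/(2n))$ nor the partition-function integrals $\int e^{\beta m_i(X)x}\,d\varrho(x)$ can be controlled uniformly.
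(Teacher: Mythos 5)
Your derivation of the conditional law under $P_{n,\beta}$ and the reduction to the ratio
$$
\E(X_i'\mid\mathcal{F})=\frac{\int x\,\exp\bigl(\tfrac{\beta}{2n}x^2+\beta m_i(X)x\bigr)\,d\varrho(x)}{\int \exp\bigl(\tfrac{\beta}{2n}x^2+\beta m_i(X)x\bigr)\,d\varrho(x)}
$$
matches the paper exactly, and your additive estimate $\E(X_i'\mid\mathcal{F})=\phi_\varrho'(\beta m_i(X))+\mathcal O(1/n)$ (uniform in $X$ and $i$) is correct. The problem is the very last step. You argue that because $\phi_\varrho'(\beta m_i(X))$ stays in a compact set, the additive $\mathcal O(1/n)$ can be ``absorbed into a multiplicative factor $(1+\mathcal O(1/n))$.'' That implication goes the wrong way: boundedness of the main term lets you pass from a multiplicative to an additive bound, not conversely. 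To go from additive to multiplicative you would need $\phi_\varrho'(\beta m_i(X))$ bounded away from zero, and it is not: $\varrho$ is symmetric, so $\phi_\varrho'(0)=\E_\varrho X_1=0$, and $m_i(X)$ is typically $\mathcal O(n^{-1/2k})$, so the main term is itself small. An additive $1/n$ error on top of a quantity of size $n^{-1/2k}$ is \emph{not} a relative error of size $1/n$.

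This is not a cosmetic point: the multiplicative form is precisely what the proof of Theorem~\ref{CWgeneral} uses. There, $\lambda=n^{1/k-2}$, and a relative error $(1+\mathcal O(1/n))$ contributes a term of order $\phi_\varrho'(\beta m_i(X))\cdot\mathcal O(1/n)=\mathcal O(Wn^{-1-1/2k})$ per site to the remainder, yielding $\E|R|/\lambda=\mathcal O(n^{-1/k})$; an additive $\mathcal O(1/n)$ per site would instead give $\E|R|/\lambda=\mathcal O(n^{-1/2k})$, which is strictly worse than the claimed Berry--Esseen rate for every $k$.

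The repair is to make the error multiplicative already at the level of the numerator, which is what the paper's sandwich is doing. Because $\varrho$ is symmetric,
$$
N_n(m):=\int x\,e^{\beta x^2/(2n)}e^{\beta m x}\,d\varrho(x)=\int x\,\sinh(\beta m x)\,e^{\beta x^2/(2n)}\,d\varrho(x),
$$
and the integrand $x\sinh(\beta m x)\ge 0$ for all $x$. Since $1\le e^{\beta x^2/(2n)}\le e^{\beta c^2/(2n)}$ on $\{|x|\le c\}$, one gets $N(m)\le N_n(m)\le e^{\beta c^2/(2n)}N(m)$ with $N(m)=\int x\sinh(\beta m x)\,d\varrho(x)\ge 0$; the same two-sided bound holds trivially for the denominator. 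Dividing gives $\E(X_i'\mid\mathcal F)=\phi_\varrho'(\beta m_i(X))\,e^{\pm\beta c^2/n}=\phi_\varrho'(\beta m_i(X))\,(1+\mathcal O(1/n))$ uniformly, without ever introducing an additive error that would have to be reabsorbed.
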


\begin{proof}
We compute the conditional density $g_{\beta}(x_1|(X_i)_{i \ge 2})$ of $X_1=x_1$ given $(X_i)_{i \ge 2}$ under the Curie-Weiss measure:
\begin{eqnarray*}
g_{\beta}(x_1|(X_i)_{i \ge 2})& = & \frac {e^{\beta/2n (\sum_{i \ge 2} x_1 X_i+ \sum_{i \neq j \ge 2} X_i X_j+ x_1^2)}}
{\int e^{\beta/2n (\sum_{i \ge 2} x_1 X_i+ \sum_{i \neq j \ge 2} X_i X_j+ x_1^2)} \varrho(dx_1)}\\
&=& \frac {e^{\beta/2n (\sum_{i \ge 2} x_1 X_i+ x_1^2)}}
{\int e^{\beta/2n (\sum_{i \ge 2} x_1 X_i+ x_1^2)} \varrho(dx_1)}.
\end{eqnarray*}
Hence we can compute $\mathbb{E}[X_1'|\mathcal{F}]$ as
$$
\mathbb{E}[X_1'|\mathcal{F}]=\frac {\int x_1 e^{\beta/2n (\sum_{i \ge 2} x_1 X_i+ x_1^2)}\varrho(dx_1)}
{\int e^{\beta/2n (\sum_{i \ge 2} x_1 X_i+ x_1^2)} \varrho(dx_1)}.
$$
Now, if $|X_1| \le c$ $\varrho$-a.s
$$
\mathbb{E}[X_1'|\mathcal{F}]\le \frac {\int x_1 e^{\beta/2n (\sum_{i \ge 2} x_1 X_i)}\varrho(dx_1) e^{\beta c^2/2n}}
{\int e^{\beta/2n (\sum_{i \ge 2} x_1 X_i+ x_1^2)} \varrho(dx_1)e^{-\beta c^2/2n}}
$$
and
$$
\mathbb{E}[X_1'|\mathcal{F}]\ge \frac {\int x_1 e^{\beta/2n (\sum_{i \ge 2} x_1 X_i)}\varrho(dx_1) e^{-\beta c^2/2n}}
{\int e^{\beta/2n (\sum_{i \ge 2} x_1 X_i+ x_1^2)} \varrho(dx_1)e^{\beta c^2/2n}}.
$$
By computation of the derivative of $G_\varrho$ we see that
$$
\frac {\int x_1 e^{\beta/2n (\sum_{i \ge 2} x_1 X_i)}\varrho(dx_1)}
{\int e^{\beta/2n (\sum_{i \ge 2} x_1 X_i+ x_1^2)} \varrho(dx_1)} e^{\pm \beta c^2/n} = \bigl( m_1(X) - \frac{1}{\beta} G_{\varrho}'(\beta, m_1(X))
\bigr) \, (1 \pm \beta c^2/n).
$$
\end{proof}

\begin{remark} If we consider the Curie-Weiss model with respect to $\widehat{P}_{n, \beta}$, the conditional density $g_{\beta}(x_1 |(X_i)_{i \geq 2})$
under this measure becomes
$$
g_{\beta}(x_1|(X_i)_{i \ge 2}) = \frac {e^{\beta/2n (\sum_{i \ge 2} x_1 X_i)}}
{\int e^{\beta/2n (\sum_{i \ge 2} x_1 X_i)} \varrho(dx_1)}.
$$
Thus we obtain $\E (X_i' | \mathcal{F}) = \bigl( m_i(X) - \frac{1}{\beta} G_{\varrho}'(\beta, m_i(X)) \bigr)$ without
the boundedness assumption for the $X_1$.
\end{remark}

Applying Lemma \ref{CWidentitygeneral} and the presentation \eqref{Taylor}
of $G_{\varrho}$, it follows that
$$
\E[W-W'|W] = \frac 1n W - \frac{1}{n^{1-1/(2k)}} \biggl(
\frac 1n \sum_{i=1}^n \biggl( m_i(X) - \frac{\mu}{\beta (2k-1)!} m_i(X)^{2k-1} + \mathcal{O} \bigl( m_i(X)^{2k} \bigr) \biggr) \biggr).
$$
With $m_i(X) = m(X) - \frac{X_i}{n}$ and $m(X) = \frac{1}{n^{1/(2k)}}W$ we obtain
$$
\frac{1}{n^{1-1/(2k)}} \frac 1n \sum_{i=1}^n m_i(X) = \frac 1n W - \frac{1}{n^2} W
$$
and
\begin{eqnarray*}
\frac{1}{n^{1-1/(2k)}} \frac 1n \sum_{i=1}^n \frac{\mu}{\beta (2k-1)!} m_i(X)^{2k-1} =\frac{1}{n^{1-1/(2k)}}
\frac{\mu}{\beta (2k-1)!} \sum_{l=0}^{2k-1} {2k -1 \choose l} m(X)^{2k-1-l} \frac{(-1)^{l}}{n^l} \frac 1n \sum_{i=1}^n X_i^l.
\end{eqnarray*}
For any $k \geq 1$ the first summand ($l=0$) is
\begin{equation} \label{schick}
\frac{1}{n^{2- \frac 1k}}  \frac{\mu}{\beta (2k-1)!} W^{2k-1} = - \frac{1}{n^{2- \frac 1k}} \psi(W).
\end{equation}
To see this, let $k=1$. Since we set $\phi''(0)=1$, we obtain $\mu(0)= \beta - \beta^2$ and therefore
$\frac{1}{\beta} \mu(0) W = (1-\beta) W$. In the case $k \geq 2$ we know that $\beta=1$. Hence in both cases,
\eqref{schick} is checked. Summarizing we obtain for any $k \geq 1$
$$
\E[W-W'|W] =  - \frac{1}{n^{2- \frac 1k}} \psi(W) + R(W) =: - \lambda \psi(W) + R(W)
$$
with
\begin{eqnarray*}
R(W) & = & \frac{1}{n^2} W + \frac{\mu}{\beta (2k-1)!} \frac{1}{n^{1-1/(2k)}}
\sum_{l=1}^{2k-1} {2k -1 \choose l} m(X)^{2k-1-l} \frac{(-1)^{l}}{n^l} \frac 1n \sum_{i=1}^n X_i^l + \mathcal{O}(m(X)^{2k}) \\
& = & \frac{1}{n^2} W + \frac{\mu}{\beta (2k-1)!}
\sum_{l=1}^{2k-1} {2k -1 \choose l} \frac{1}{n^{2 - \frac 1k - \frac{l}{2k}}} W^{2k-1-l}  \frac{(-1)^{l}}{n^l} \frac 1n \sum_{i=1}^n X_i^l + \mathcal{O}(\frac{W^{2k}}{n}).
\end{eqnarray*}
With Lemma \ref{momentsgeneral} we know that $\E |W|^{2k} \leq \rm{const}$.
We will apply Corollary \ref{corsigma2}, if $k=1$ and Theorem \ref{generaldensity2} for $k \geq 2$. In both cases we apply
Lemma \ref{genbound}. Since the spin variables are assumed to be bounded $\varrho$-a.s,
we have
$$
|W-W'| \leq \frac{\rm{const.}}{n^{1 - \frac{1}{2k}}} =:A.
$$

Let $k=1$. Now $\lambda= \frac 1n$, $A= {\rm const.}/n^{-1/2}$, $\E(W^4) \leq {\rm const}$. The leading term
of $R$ is $W/n^2$. Hence the last four summands in \eqref{mainbound3} of Corollary \ref{corsigma2} are $\mathcal{O} (n^{-1/2})$.

For $k \geq 2$ we obtain
$\frac{3 A}{2} \E(|\psi(W)|) = \mathcal{O} \bigl( n^{\frac{1}{2k}-1} \bigr)$ and
$\frac{1}{\lambda} \bigl( \frac{d_4 A^3}{4} \bigr) =  \mathcal{O} \bigl( n^{\frac{1}{2k}-1} \bigr)$.
The leading term in the second term of $R(W)$ is the first summand ($l=1$), which is of order
$\mathcal{O}(n^{-3+ \frac{1}{k} + \frac{1}{2k}})$.
With $\lambda= n^{\frac{1}{k} -2}$ we obtain
$$
\frac{\E(|R|)}{\lambda} \leq \frac{\E(|W|)}{\lambda n^2} +  \mathcal{O} \bigl( n^{\frac{1}{2k} -1} \bigr) \quad \text{and}
\quad \frac{\E(|W|)}{\lambda n^2} = \mathcal{O} \bigl( n^{1/k} \bigr).
$$
Hence the last four summands in \eqref{kolall2} of Theorem \ref{generaldensity2} are $\mathcal{O} (n^{-1/k})$.

Finally we have to consider the variance of $\frac{1}{2 \lambda}\E[(W-W')^2|W]$. Hence we have to bound the variance of
\begin{equation} \label{haupt1}
\frac{1}{2n} \sum_{i=1}^m X_i^2 + \frac{1}{2n} \sum_{i=1}^n E[ (X_i')^2|\mathcal{F}] + \frac 1n \sum_{i=1}^n X_i \biggl( m_i(X) - \frac{1}{\beta}
G_{\varrho}'(\beta, m_i(X)) \biggr) \,  (1 + O(1/n)).
\end{equation}
Since we assume that $\varrho \in {\rm GHS}$, we can apply the correlation-inequality due to Lebowitz (see Remark \ref{lebo})
$$
\E(X_i X_j X_k X_l) - \E(X_i X_j) \E(X_k X_l)  - \E(X_i X_k) \E(X_j X_l)  - \E(X_i X_l) \E(X_j X_k) \leq 0.
$$
The choice $i=k$ and $j=l$ leads to the bound
$$
{\rm Cov} \bigl( X_i^2, X_j^2) = \E(X_i^2 X_j^2) - \E(X_i^2) \E(X_j^2) \leq 2 (\E (X_i \, X_j))^2.
$$
With Lemma \ref{momentsgeneral} we know that $(\E(X_i X_j))^2 \leq {\rm const.} n^{-2/k}$. This gives
$$
{\rm Var} \bigl( \frac{1}{2n}\sum_{i=1}^n X_i^2 \bigr) = \frac{1}{4 n^2} \sum_{i=1}^n {\rm Var}(X_i^2) + \frac{1}{4 n^2} \sum_{1 \leq i < j \leq n} {\rm Cov} (X_i^2, X_j^2)
= \mathcal{O} \bigl( n^{-1} \bigr) + \mathcal{O} \bigl(n^{-2/k}).
$$
Using a conditional version of Jensen's inequality we have
$$
{\rm Var} \bigl( \E \bigl( \frac{1}{2n}\sum_{i=1}^n X_i^2 \bigl| \mathcal{F} \bigr) \bigr) \leq {\rm Var} \bigl( \frac{1}{2n}\sum_{i=1}^n X_i^2 \bigr).
$$
Hence the variance of the second term in \eqref{haupt1} is of the same order as the variance of the first term.
Applying \eqref{Taylor} for $G_{\varrho}$, the variance of the third term in \eqref{haupt1} is of the order of the variance of $W^2 / n^{1/k}$.
Summarizing the variance of \eqref{haupt1} can be bounded by 9 times the maximum of the variances of the three terms in \eqref{haupt1}, which
is a constant times $n^{-2/k}$, and therefore for $k \geq 1$ we obtain
$$
\biggl( {\rm Var} \biggl(  \frac{1}{2 \lambda} \, \E[(W-W')^2|W] \biggr) \biggr)^{1/2} = \mathcal{O}(n^{-1/k}).
$$
Note that for $k \geq 2$
$$
\frac{\psi(x)}{-\E[W \psi(W)]} = -\frac{x^{2k-1}}{\E(W^{2k})}.
$$
Hence we compare the distribution of $W$ with a distribution with Lebesgue-probability density proportional to
$\exp \bigl( -\frac{x^{2k}}{2k \E(W^{2k})} \bigr)$.
\end{proof}

\begin{proof}[Proof of Theorem \ref{CWgendep}]
Since $\alpha=0$ and $k=1$ for $\beta \not=1$ while $\alpha=0$ and $k \geq 2$ for $\beta =1$, $G_{\varrho}(\cdot)$ can now be expanded as
$$
G(s) = G(0) + \frac{\mu_1}{2} s^2 + \frac{\mu_k}{(2k)!} s^{2k} + \mathcal{O} (s^{2k+1}) \quad \text{as} \quad s \to 0.
$$
Hence $\frac{1}{\beta_n} \, G_{\varrho}'(s) = \frac{\mu_1}{\beta_n} s + \frac{\mu_k}{\beta_n (2k-1)!} s^{2k-1} + \mathcal{O}(s^{2k})$.
With Lemma \ref{CWidentitygeneral} and $\mu_1=(1-\beta_n) \beta_n$ we obtain
$$
\E[X_i | \mathcal{F}] = \beta_n  m_i(X) - \frac{\mu_k}{\beta_n (2k-1)!} m_i(X)^{2k-1} \, (1 + \mathcal{O}(1/n)).
$$
We get
$$
\E[W-W'|W] = \frac{1 -\beta_n}{n} W + \frac{\beta_n}{n^2} W + \frac{1}{n^{2 - 1/k}} \frac{\mu_k}{\beta_n (2k-1)!} W^{2k+1} + R(\beta_n, W).
$$
The remainder $R(\beta_n,W)$ is the remainder in the proof of Theorem \ref{CWgeneral} with $\mu$ exchanged by $\mu_{k}$ and $\beta$
exchanged by $\beta_n$.

\noindent
Let $\beta_n -1 = \frac{\gamma}{n^{1-1/k}}$ and $W= n^{1/(2k)-1} \sum_{i=1}^n X_i$. We obtain
\begin{equation} \label{wunderbar}
\E[W-W'|W] = - \frac{1}{n^{2 - 1/k}} \psi(W) + \frac{\beta_n}{n^2} W + R(\beta_n,W),
\end{equation}
where $\psi(x) = \gamma x - \frac{\mu_k}{\beta_n \, (2k-1)!} x^{2k-1}$. As in the proof of Theorem \ref{CWgeneral} we
obtain that $R(\beta_n,W)= \mathcal{O}(n^{-2})$. Now we only have to adapt the proof of Theorem \ref{CWgeneral} step by step,
applying Lemma \ref{momentsgeneral}, Lemma \ref{genbound} and Theorem \ref{generaldensity2}.

\noindent
Let $|\beta_n -1| = \mathcal{O}(1/n)$ and $W= n^{1/(2k)-1} \sum_{i=1}^n X_i$. Now in \eqref{wunderbar}, the term $\frac{1-\beta_n}{n} W$ will be a part of the remainder:
\begin{eqnarray*} 
\E[W-W'|W] & = & \frac{1}{n^{2 - 1/k}} \frac{\mu_k}{\beta_n (2k-1)!} W^{2k+1} + R(\beta_n, W) +  \frac{\beta_n}{n^2} W + \frac{1 -\beta_n}{n} W \\
& =: & - \frac{1}{\beta_n \, n^{2 - 1/k}} \psi(W) + \hat{R}(\beta, W)
\end{eqnarray*}
with $\psi(x) = - \frac{\mu_k}{(2k-1)!} x^{2k-1}$.
Following the lines of the proof of Theorem \ref{CWgeneral}, we have to bound $\frac{\E|\hat{R}(\beta_n,W)|}{\lambda}$ with $\lambda:= \frac{1}{\beta_n \, n^{2-1/k}}$.
Since by our assumption for $(\beta_n)_n$ we have
$$
\lim_{n \to \infty} \frac{1}{\lambda} \frac{(1-\beta_n)}{n} = \beta_n (1- \beta_n) n^{1-1/k} =0.
$$
Thus with Theorem \ref{generaldensity2} we obtain convergence in distribution for any $\beta_n$ with $|\beta_n -1| \ll n^{-(1-1/k)}$.
Moreover we obtain the Berry-Esseen bound of order $\mathcal{O}(n^{-1/k})$ for any $|\beta_n-1| = \mathcal{O}(n^{-1})$.

\noindent
Finally we consider $|\beta_n -1| \gg n^{-(1-1/2)}$ and $W= \sqrt{\frac{(1-\beta_n)}{n}} S_n$. A little calculation gives
$$
\E[W-W'|W] = \frac{1-\beta_n}{n} W + \frac{\beta_n \, W}{n^2} + \frac{\mu_k}{(2k-1)! n^k (1-\beta_n)^{k-1} \beta_n} W^{2k-1} + R(\beta_n,W) =: -\lambda \psi(W) +
\hat{R}(\beta_n, W)
$$
with $\psi(x) = -x$ and $\lambda = \frac{1-\beta_n}{n}$. Now we apply Corollary \ref{corsigma2}. With $A := \frac{\rm{const.} (1-\beta_n)^{1/2}}{\sqrt{n}}$
we obtain
$$
\frac{A^3}{\lambda} \leq \frac{\rm{const.} (1-\beta_n)^{1/2}}{\sqrt{n}} \quad \text{and} \quad \frac{\E| \hat{R}(\beta_n,W)|}{\lambda} \leq
\frac{\rm{const}}{n^{k-1}(1-\beta_n)^k}.
$$
Remark that the bound on the right hand side is good for any $|\beta_n -1| \gg n^{-(1-1/k)}$.
Finally we have to bound the variance of $\frac{1}{2 \lambda} \, \E[(W-W')^2|W]$. The leading term is the variance
of
$$
\frac 1n \sum_{i=1}^n X_i \biggl( m_i(X) - \frac{1}{\beta}
G_{\varrho}'(\beta, m_i(X)) \biggr),
$$
which is of order $\mathcal{O} \bigl( \frac{\beta_n}{n(1-\beta_n)} \bigr)$. Hence with
$|\beta_n -1| \gg n^{-(1-1/k)}$ we get convergence in distribution.
Under the additional assumption that $|\beta_n -1| \gg n^{-(1/2-1/(2k))}$ we obtain the Berry-Esseen bound.
\end{proof}

\begin{proof}[Proof of Theorem \ref{wasserstein}]
We apply Theorem \ref{generaldensity2}. For unbounded spin variables $X_i$ we consider $\widehat{P}_{n, \beta}$ and apply Lemma \ref{CWidentitygeneral}
to bound $\frac{1}{\lambda} \sqrt{ {\rm Var} ( \E[(W-W')^2|W] )}$ exactly as in the proof of Theorem \ref{CWgeneral}.
By Theorem \ref{generaldensity2} it remains to bound $\frac{1}{\lambda} \E|W-W'|^3$. With $\lambda = n^{-2+1/k}$ we have
$$
\frac{1}{\lambda} \E|W-W'|^3 = \frac{1}{n^{1- 1/2k}} \E|X_I - X_I'|^3 = \frac{1}{n^{1- 1/2k}} \E|X_1 - X_1'|^3.
$$
Now $ \E|X_1 - X_1'|^3 \leq \E|X_1|^3 + 3 \E|X_1^2 \, X_1'| + 3 \E |X_1 (X_1')^2| + \E |X_1'|^3$.
Using H\"older's inequality we obtain
$$
\E|X_1^2 \, X_1'| \leq \bigl( \E|X_1|^3 \bigr)^{2/3} \, \bigl( \E|X_1'|^3 \bigr)^{1/3} \leq \max \bigl( \E |X_1|^3, \E |X_1'|^3 \bigr).
$$
Hence we have
$$
\frac{1}{\lambda} \E|W-W'|^3 \leq \frac{8}{n^{1 - 1/2k}} \max \bigl( \E |X_1|^3, \E |X_1'|^3 \bigr).
$$
Thus the Theorem is proved.
\end{proof}

\section{Examples}
It is known that the following distributions $\varrho$ are ${\rm GHS}$ (see \cite[Theorem 1.2]{Ellis/Monroe/Newman:1976}).
The symmetric Bernoulli measure is ${\rm GHS}$, first noted in \cite{Ellis:1975}. The family of measures
$$
\varrho_a(dx) = a \, \delta_x + \bigl( (1-a)/2 \bigr) \bigl( \delta_{x-1} + \delta_{x+1} \bigr)
$$
for $0 \leq a \leq 2/3$ is ${\rm GHS}$, whereas the GHS-inequality fails for $2/3 < a < 1$, see \cite[p.153]{Griffiths/Simon:1973}.
${\rm GHS}$ contains all measures of the form
$$
\varrho_V(dx) := \bigl( \int_{\R} \exp \bigl( -V(x) \bigr) \, dx \bigr)^{-1} \, \exp \bigl( -V(x) \bigr)\, dx,
$$
where $V$ is even, continuously differentiable, and unbounded above at infinity, and $V'$ is convex on $[0, \infty)$.
${\rm GHS}$ contains all absolutely continuous measures $\varrho \in {\mathcal B}$ with support on $[-a,a]$
for some $0 < a < \infty$ provided $g(x) = d\varrho/dx$ is continuously differentiable and strictly
positive on $(-a,a)$ and $g'(x)/g(x)$ is concave on $[0,a)$. Measures like $\varrho(dx) = {\rm const.} \exp \bigl( -a x^4 - b x^2 \bigr) \, dx$
or $\varrho(dx) = {\rm const.} \exp \bigl( - a \cosh x - b x^2 \bigr) \, dx$ with $a >0$ and $b$ real are GHS. Both are of physical interest, see
\cite{Ellis/Monroe/Newman:1976} and references therein).

\noindent
\begin{example}[A Curie--Weiss model with three states]
We will now consider the next simplest example of the classical Curie--Weiss model: a model with three states.
Observe, that this is not the Curie--Weiss--Potts model \cite{Ellis/Wang:1990}, since the latter has a different Hamiltonian.
Indeed the Hamiltonian considered in \cite{Ellis/Wang:1990} is of the form $\frac 1n \sum_{i,j} \delta_{x_i, x_j}$. It favours
states with many equal spins, whereas in our case the spins also need to have large values.
We choose $\varrho$ to be
$$
\varrho=\frac 23 \delta_0 + \frac 16 \delta_{-\sqrt 3}+ \frac 16 \delta_{\sqrt 3}.
$$
This model seems to be of physical relevance. It is studied in \cite{Thompson:book}. In \cite{Blume/Emery/Griffiths:1971}
it was used to analyze the tri-critical point of liquid helium.
A little computation shows that
$$
\frac {d^3}{ds^3} \phi_\varrho(s)=  - 6\,{\displaystyle \frac {{\rm sinh}(x\,\sqrt{3})\,\sqrt{3}\,(
{\rm cosh}(x\,\sqrt{3}) - 1)}{12\,{\rm cosh}(x\,\sqrt{3}) + 6\,
{\rm cosh}(x\,\sqrt{3})^{2} + {\rm cosh}(x\,\sqrt{3})^{3} + 8}} \le 0
$$
for all $s \ge 0$. Hence the GHS-inequality \eqref{GHS} is fulfilled (see also \cite[Theorem 1.2]{Ellis/Monroe/Newman:1976}),
which implies that there is one critical temperature
$\beta_c$ such that there is one minimum of $G$ for $\beta\le \b_c$ and two minima above $\beta_c$.
Since ${\rm Var}_\varrho(X_1)=
2 \frac 1 6 \cdot 3 =1$  we see that $\beta_c=1$. For $\beta\le \b_c$ the minimum of $G$ is located in zero while for
$\beta> 1$ the two minima are symmetric and satisfy
$$
s=
\frac{\sqrt{3}\sinh(\sqrt{3}\beta s)}{2 + \cosh(\sqrt{3}\beta s\,)}.
$$
Now Theorem \ref{CWgeneral} and \ref{CWgendep} tell that
\begin{itemize}
\item For $\beta<1$ the rescaled magnetization $S_n/ \sqrt{n}$ satisfies a Central Limit Theorem and the
limiting variance is $(1-\beta)^{-1}$. Indeed, $\frac {d^2}{ds^2} \phi_\varrho(0)={\rm Var}_\varrho(X_1)=1$.
Hence $\mu_1= \beta-\beta^2$ and $\sigma^2 =\frac 1 {1-\beta}$. Moreover we obtain
$$
\sup_{z \in \R} \bigg| P_n \bigl( \frac{S_n}{\sqrt{n}} \leq z \bigr) - \Phi_W(z) \bigg| \leq \frac{C}{\sqrt{n}}.
$$
\item For $\beta=\b_c=1$ the rescaled magnetization $S_n/ n^{5/6}$ converges in distribution
to $X$ which has the density $f_{3, 6, 1}$. Indeed $\mu_2$ is computed to be 6. Moreover we obtain
$$
\sup_{z \in \R} \bigg| P_n \bigl( \frac{S_n}{n^{5/6}} \leq z \bigr) - \widehat{F}_3(z) \bigg| \leq \frac{C}{n^{1/3}}
$$
where the derivative of $\widehat{F}_3$ is the rescaled density $\exp \bigl( - \frac{x^6}{6 \E(W^6)} \bigr)$.
\item If $\beta_n$ converges monotonically to $1$ faster than $n^{-2/3}$ then $\frac{S_n}{n^{5/6}}$ converges in distribution
to $\widehat{F}_3$, whereas
if $\beta_n$ converges monotonically to $1$ slower than $n^{-2/3}$ then $\frac{\sqrt{1-\beta_n}\, S_n}{\sqrt{n}}$ satisfies
a Central Limit Theorem. Eventually, if
$|1-\beta_n|= \gamma n^{-2/3}$, $\frac{S_n}{n^{5/6}}$ converges in distribution to a random variable which probability distribution
has the mixed Lebesgue-density
$$
\exp \biggl( - c_W^{-1} \biggl( \frac{x^6}{120} - \gamma \frac{x^2}{2} \biggr) \biggr)
$$
with $c_W = \frac{1}{120} \E(W^6) - \gamma \E(W^2)$.
Moreover we have
$$
\sup_{z \in \R} \bigg| P_n \biggl(\frac{S_n}{n^{5/6}} \leq z \biggr) - \frac{1}{Z} \int_{- \infty}^z \exp \biggl( - c_W^{-1} \biggl( \frac{x^6}{120} - \gamma \frac{x^2}{2} \biggr) \biggr) \bigg| \leq \frac{C}{n^{1/3}}.
$$
\end{itemize}
\end{example}

\noindent
\begin{example}[A continuous Curie--Weiss model]
Last but not least we will treat an example of a continuous Curie--Weiss model. We choose as underlying distribution
the uniform distribution on an interval in $\R$. To keep the critical temperature one we define
$$
\frac{d\varrho (x_i)}{d\, x_i}=  \frac{1}{2a} \mathbb{I}_{[-a,a]}(x_i)
$$
with $a= \sqrt 3$. Then from a general result in \cite[Theorem 2.4]{Ellis/Newman:1978b} (see also \cite[Theorem 1.2]{Ellis/Monroe/Newman:1976})
it follows that $\varrho (x_i)$ obeys the
GHS-inequality \eqref{GHS}. Therefore there exists a critical temperature $\beta_c$,
such that for $\b<\b_c$ zero is the unique global minimum of
$G$ and is of type 1, while at $\beta_c$ this minimum is of type $k \ge 2$.
This $\b_c$ is easily computed to be one. Indeed,
$\mu_1 = \b - \b^2 \phi''(0)=  \b - \b^2 \mathbb{E}_\varrho(X_1^2)= \b(1-\beta)$,
since $\varrho$ is centered and has variance one.
Thus $\mu_1$ vanishes at $\b=\b_c=1$. Eventually for $\b> 1$ there are again two minima which are solutions of
$$
\frac{\sqrt 3 \b}{\tanh(\sqrt 3 \b x)} = \b x + \frac 1x.
$$
Now again by Theorems \ref{CWgeneral} and \ref{CWgendep}
\begin{itemize}
\item For $\beta<1$ the rescaled magnetization $S_n/\sqrt{n}$ obeys a Central Limit Theorem and the limiting variance is
$(1-\beta)^{-1}$. Indeed, since $\mathbb{E}_\varrho(X_1^2)=1$, $\mu_1= \beta-\beta^2$ and $\sigma^2 =\frac 1 {1-\beta}$.
\item For $\beta=\b_c=1$ the rescaled magnetization $S_n/n^{7/8}$ converges in distribution
to $X$ which has the density $f_{4, 6/5, 1}$. Indeed $\mu_2$ is computed to be
$$
-\E_\varrho(X_1^4)+3\mathbb{E}_\varrho(X_1^2)= -\frac 9 5 +3 = \frac 65.
$$
Moreover we obtain
$$
\sup_{z \in \R} \bigg| P_n \biggl( \frac{S_n}{n^{7/8}} \leq z \biggr) - \widehat{F}_4(z) \bigg| \leq \frac{C}{n^{1/4}}
$$
where the derivative of $\widehat{F}_4$ is the rescaled density $\exp \bigl( - \frac{x^8}{8 \E(W^8)} \bigr)$.
\item If $\beta_n$ converges monotonically to $1$ faster than $n^{-3/4}$ then $\frac{S_n}{n^{7/8}}$ converges in distribution
to $\widehat{F}_4$, whereas
if $\beta_n$ converges monotonically to $1$ slower than $n^{-3/4}$ then $\frac{\sqrt{1-\beta_n}\, S_n}{\sqrt{n}}$ satisfies
a Central Limit Theorem. Eventually, if
$|1-\beta_n|= \gamma n^{-3/4}$, $\frac{S_n}{n^{7/8}}$ converges in distribution
to the mixed density
$$
\exp \biggl( - c_W^{-1} \biggl( \frac{6}{5} \frac{x^8}{8!} - \gamma \frac{x^2}{2} \biggr) \biggr)
$$
with $c_W = \frac{5}{6 (8!)} \E(W^8) - \gamma \E(W^2)$.
Moreover we have
$$
\sup_{z \in \R} \bigg| P_n \biggl(\frac{S_n}{n^{7/8}} \leq z \biggr) - \frac{1}{Z} \int_{- \infty}^z \exp \biggl( - c_W^{-1}
\biggl( \frac{6}{5} \frac{x^8}{8!} - \gamma \frac{x^2}{2} \biggr) \biggr) \bigg| \leq \frac{C}{n^{1/4}}.
$$
\end{itemize}

Note that there is some interesting change in limiting behaviour of all of these models
at criticality. While for $\beta<1$ all of the models have the same rate of convergence for the Central Limit
Theorem behaviour, in the limit at
criticality the limiting distribution function as well as the distributions which depend on some
moments of $W$ becomes characteristic of the underlying distribution $\varrho$. Moreover the
rate of convergence differs at criticality (for $k \geq 3$).
\end{example}

\section{Appendix}

\begin{proof}[Proof of Lemma \ref{genbound}]
Consider a probability density of the form
\begin{equation} \label{proto}
p(x) := p_k(x) := b_k \exp \bigl( - a_k x^{2k} \bigr)
\end{equation}
with $b_k = \int_{\R} \exp \bigl(- a_k x^{2k} \bigr) \, dx$. 
Clearly $p$ satisfies Assumption (D). First we prove that the solutions $f_z$ of the Stein equation, which characterizes the distribution with respect
to the density \eqref{proto}, satisfies Assumption (B2). 
Let $f_z$ be the solution of
$$
f_z'(x) + \psi(x) f_z(x) = 1_{\{x \leq z\}}(x) - P(z).
$$
Here $\psi(x) = - 2k \, a_k \, x^{2k-1}$. We have
\begin{equation} \label{thesolution}
f_z(x) = \left\{ \begin{array}{ll} (1-P(z)) \, P(x) \exp( a_k x^{2k}) b_k^{-1} & \mbox{for } x \leq z, \\
P(z) \, (1-P(x))  \exp( a_k x^{2k}) b_k^{-1} & \mbox{for } x \geq z \\ \end{array} \right.
\end{equation}
with $P(z) := \int_{-\infty}^z p(x) \, dx$. Note that $f_z(x) = f_{-z}(-x)$, so we need only to consider the case $z \geq 0$.
For $x >0$ we obtain
\begin{equation} \label{ungl1}
1 - P(x) \leq \frac{b_k}{2k \, a_k x^{2k-1}} \exp \bigl( - a_k x^{2k} \bigr),
\end{equation}
whereas for $x<0$ we have
\begin{equation} \label{ungl1b}
P(x) \leq \frac{b_k}{2k \, a_k |x|^{2k-1}} \exp \bigl( - a_k x^{2k} \bigr).
\end{equation}
By partial integration we have
$$
\int_x^{\infty} \frac{(2k-1)}{2k \, a_k} t^{-2k} \,  \exp \bigl( - a_k t^{2k} \bigr) = - \frac{1}{2k \, a_k \, t^{2k-1}}  \exp \bigl( - a_k t^{2k} \bigr) 
\bigg|_{x}^{\infty} - \int_{x}^{\infty}  \exp \bigl( - a_k t^{2k} \bigr)\, dt.
$$
Hence for any $x >0$
\begin{equation} \label{ungl2}
b_k \, \biggl( \frac{x}{2k \, a_k x^{2k} + 2k-1} \biggr) \exp \bigl( - a_k x^{2k} \bigr)
\leq  1 - P(x).
\end{equation}
With \eqref{ungl1} we get for $x >0$
$$
\frac{d}{dx} \biggl( \exp \bigl( a_k x^{2k} \bigr) \int_{x}^{\infty} \exp  \bigl( - a_k t^{2k} \bigr) \, dt \biggr) = -1 + 2k \, a_k x^{2k-1} \exp
\bigl(a_k x^{2k} \bigr) \, \int_x^{\infty} \exp  \bigl( - a_k t^{2k} \bigr) \, dt < 0.
$$
So $ \exp \bigl( a_k x^{2k} \bigr) \int_{x}^{\infty} \exp  \bigl( - a_k t^{2k} \bigr) \, dt$ attains its maximum at $x=0$ and therefore
$$
 \exp \bigl( a_k x^{2k} \bigr) \, b_k \, \int_x^{\infty}   \exp  \bigl( - a_k t^{2k} \bigr) \, dt \leq \frac 12.
$$
Summarizing we obtain for $x>0$
\begin{equation} \label{ungl3}
1-P(x) \leq \min \biggl( \frac 12, \frac{b_k}{2k \, a_k \, x^{2k-1}} \biggr)  \exp  \bigl( - a_k x^{2k} \bigr). 
\end{equation}
With \eqref{ungl1b} we get for $x<0$
$$
\frac{d}{dx} \biggl( \exp \bigl( a_k x^{2k} \bigr) \int_{-\infty}^{x} \exp  \bigl( - a_k t^{2k} \bigr) \, dt \biggr) = 1 + 2k \, a_k x^{2k-1} \exp
\bigl(a_k x^{2k} \bigr) \, \int_{-\infty}^x \exp  \bigl( - a_k t^{2k} \bigr) \, dt > 0.
$$
So $ \exp \bigl( a_k x^{2k} \bigr) \int_{-\infty}^{x} \exp  \bigl( - a_k t^{2k} \bigr) \, dt$ attains its maximum at $x=0$ and therefore
$$
 \exp \bigl( a_k x^{2k} \bigr) \, b_k \, \int_{-\infty}^{x}   \exp  \bigl( - a_k t^{2k} \bigr) \, dt \leq \frac 12.
$$
Summarizing we obtain for $x<0$
\begin{equation} \label{ungl3b}
P(x) \leq \min \biggl( \frac 12, \frac{b_k}{2k \, a_k \, |x|^{2k-1}} \biggr)  \exp  \bigl( - a_k x^{2k} \bigr). 
\end{equation}

Applying \eqref{ungl3} and \eqref{ungl3b} gives $0 < f_z(x) \leq \frac{1}{2 \, b_k}$ for all $x$. Note that for
$x<0$ we only have to consider the first case of \eqref{thesolution}, since $z \geq 0$. The constant $\frac{1}{2 \, b_k}$ is not optimal. Following the proof of Lemma 2.2
in \cite{ChenShao:2005} or alternatively of Lemma 2 in \cite[Lecture II]{Stein:1986} would lead to optimal constants. We omit this.
It follows from \eqref{thesolution} that
\begin{equation} \label{thederivative}
f_z'(x) = \left\{ \begin{array}{ll} (1-P(z)) \biggl[ 1 + x^{2k-1} \, 2k \, a_k \,  P(x) \exp( a_k x^{2k}) b_k^{-1} \biggr] & \mbox{for } x \leq z, \\
P(z) \biggl[ (1-P(x))  \, 2k \, a_k \, x^{2k-1} \, \exp( a_k x^{2k}) b_k^{-1} -1 \biggr] & \mbox{for } x \geq z. \\ \end{array} \right.
\end{equation}
With \eqref{ungl1} we obtain for $0< x \leq z$ that
$$
f_z'(x) \leq  (1-P(z)) \biggl[z^{2k-1} \, 2k \, a_k \,  P(z) \exp( a_k z^{2k}) b_k^{-1} \biggr] +1 \leq 2.
$$
The same argument for $x \geq z$ leads to $|f_z'(x)| \leq 2$. For $x<0$ we use the first half of \eqref{thesolution} and apply \eqref{ungl1b} 
to obtain $|f_z'(x)| \leq 2$. Actually this bound will be improved later.
Next we calculate the derivative of $-\psi(x) \, f_z(x)$:
\begin{equation} \label{theproduct}
(-\psi(x) f_z(x))'
= \left\{ \begin{array}{ll} \frac{(1-P(z))}{b_k} \biggl[ P(x)e^{a_k x^{2k}} \biggl(2k(2k-1)a_k x^{2k-2}+(2k)^2 a_k^2 x^{4k-2} \biggr) + 2k a_k x^{2k-1} b_k \biggr],
&  x \leq z, \\
\frac{P(z)}{b_k} \biggl[ (1-P(x)) e^{a_k x^{2k}} \biggl(2k(2k-1) a_k x^{2k-2} +(2k)^2 a_k^2 x^{4k-2} \biggr) - 2k a_k x^{2k-1} b_k \biggr],
& x \geq z. \\ \end{array} \right.
\end{equation}
With \eqref{ungl2} we obtain $(-\psi(x) f_z(x))' \geq 0$, so $-\psi(x) f_z(x)$ is an increasing function of $x$ (remark that for $x<0$
we only have to consider the first half of \eqref{thesolution}). Moreover
with \eqref{ungl1}, \eqref{ungl1b} and \eqref{ungl2} we obtain that
\begin{equation} \label{ungl4}
\lim_{x \to -\infty} 2k \, a_k \, x^{2k-1} f_z(x) = P(z)-1 \quad \text{and} \quad \lim_{x \to \infty}  2k \, a_k \, x^{2k-1} f_z(x) = P(z).
\end{equation}
Hence we have $|2k \, a_k \, x^{2k-1} f_z(x)| \leq 1$ and $|2k \, a_k \bigl(x^{2k-1} f_z(x)- u^{2k-1} f_z(u)\bigr)| \leq 1$ for any $x$ and $u$.
From \eqref{ungl1} it follows that $f_z'(x) >0$ for all $x<z$ and $f_z'(x) < 0$ for $x>z$.
With Stein's identity $f_z'(x) = - \psi(x) f_z(x) + 1_{\{x \leq x\}} -P(z)$ and
\eqref{ungl4} we have
$$
0 < f_z'(x) \leq -\psi(z) f_z(z) + 1 - P(z) <1 \quad \text{for} \quad x < z
$$
and
$$
-1 < -\psi(z) f_z(z) - P(z) \leq f_z'(x) < 0  \quad \text{for} \quad x > z.
$$
Hence, for any $x$ and $y$, we obtain
$$
|f_z'(x)| \leq 1 \quad \text{and} \quad |f_z'(x) - f_z'(y)| \leq \max \bigl( 1, -\psi(z) f_z(z) + 1 - P(z) - (-\psi(z) f_z(z) - P(z)) \bigr)=1.
$$
Next we bound $(-\psi(x) f_z(x))'$. We already know that $(-\psi(x) f_z(x))'>0$. Again we apply \eqref{ungl1} and \eqref{ungl1b} to see that
$$
(-\psi(x) f_z(x))' \leq \frac{2k-1}{|x|}
$$
for $x \geq z >0$ and all $x \leq 0$. For $0<x \leq z$ this latter bound holds, as can be seen by applying this bound
(more precisely the bound for $(-\psi(x) f_z(x))' \, \frac{b_k}{P(z)}$ for $x \geq z$) with $-x$ for $x$ to the formula for $(\psi(x) f_z(x))'$ 
in $x \leq z$. For some constant $c$ we can bound $(\psi(x) f_z(x))'$ by $c$ for all $|x| \geq \frac{2k-1}{c}$. 
Moreover, on $[-\frac{2k-1}{c}, \frac{2k-1}{c}]$ the continuous
function  $(-\psi(x) f_z(x))'$ is bounded by some constant $d$, hence we have proved
$$
|-(\psi(x) f_z(x))' | \leq \max(c,d).
$$
The problem of finding the optimal constant, depending on $k$, is omitted. Summarizing, Assumption (B2) is fulfilled for $p$ with
$d_2=d_3=1$ and some constants $d_1$ and $d_4$.
\medskip

Next we consider an absolutely continuous function $h : \R \to \R$. Let $f_h$ be the solution of the Stein equation \eqref{steinid2},
that is
$$
f_h(x) = \frac{1}{p(x)} \int_{-\infty}^x (h(t) - Ph) \, p(t) \, dt = - \frac{1}{p(x)} \int_{x}^{\infty} (h(t) - Ph) \, p(t) \, dt. 
$$
We adapt the proof of \cite[Lemma 2.3]{ChenShao:2005}: without loss of generality we assume that $h(0)=0$ and put $e_0 := \sup_{x} |h(x) -Ph|$ and
$e_1:= \sup_x |h'(x)|$. Form the definition of $f_h$ it follows that $|f_h(x)| \leq e_0 \frac{1}{2 b_k}$. 
An alternative bound is $c_1 \, e_1$ with some constant $c_1$ depending on $\E|Z|$, where $Z$ denotes a random variable distributed according to $p$.
With \eqref{steinid2} and \eqref{ungl2}, for $x \geq 0$,
$$
|f_h'(x)| \leq |h(x) -Ph| - \psi(x) e^{a_k x^{2k}} \int_x^{\infty} |h(t) - Ph| e^{-a_k t^{2k}} \, dt \leq 2 e_0.
$$
An alternative bound is $c_2 \, e_1$ with some constant $c_2$ depending on the $(2k-2)$'th moment of $p$. This is using Stein's identity
\eqref{steinid2} to obtain
$$
f_h'(x) = - e^{a_k x^{2k}} \int_{x}^{\infty} (h'(t) - \psi'(t) \, f(t)) e^{-a_k t^{2k}} \, dt.
$$
The details are omit. To bound the second derivative $f_h''$, we differentiate \eqref{steinid2} and have
$$
f_h''(x) = \bigl(\psi^2(x) - \psi'(x) \bigr) f_h(x) - \psi(x) \bigl( h(x) -Ph \bigr) + h'(x).
$$
Similarly to \cite[(8.8), (8.9)]{ChenShao:2005} we obtain
$$
h(x) - Ph = \int_{-\infty}^x h'(t) P(t) \, dt - \int_{x}^{\infty} h'(t) (1-P(t)) \, dt.
$$
It follows that
$$
f_h(x) = -\frac{1}{b_k} e^{a_k x^{2k}} (1-P(x)) \, \int_{-\infty}^x h'(t) P(t) \, dt - \frac{1}{b_k} e^{a_k x^{2k}} P(x) \, \int_{x}^{\infty} h'(t) (1-P(t))\, dt.
$$
Now we apply the fact that the quantity in \eqref{theproduct} is non-negative to obtain 
\begin{eqnarray*}
|f_h''(x)| & \leq & |h'(x)| +  \big| \bigl(\psi^2(x) - \psi'(x) \bigr) f_h(x) - \psi(x) \bigl( h(x) -Ph \bigr) \big| \\
& \leq & |h'(x)| + \biggl| \biggl( - \psi(x) -  \frac{1}{b_k} \bigl( \psi^2(x) - \psi'(x) \bigr) e^{a_k x^{2k}} (1-P(x)) \biggr) \, \int_{-\infty}^x h'(t) P(t) \, dt \biggr| \\
& & \hspace{0.5cm} +   \biggl| \biggl( \psi(x) -  \frac{1}{b_k} \bigl( \psi^2(x) - \psi'(x) \bigr) e^{a_k x^{2k}} \, P(x) \biggr) \, 
\int_x^{\infty} h'(t) (1-P(t)) \, dt \biggr| \\
& \leq & |h'(x)| + e_1 \biggl( \psi(x) +  \frac{1}{b_k} \bigl( \psi^2(x) - \psi'(x) \bigr) e^{a_k x^{2k}} (1-P(x)) \biggr) \, \int_{-\infty}^x P(t) \, dt \\
& & \hspace{0.5cm} + e_1 \biggl( - \psi(x) +  \frac{1}{b_k} \bigl( \psi^2(x) - \psi'(x) \bigr) e^{a_k x^{2k}} P(x) \biggr) \, \int_{x}^{\infty} (1-P(t)) \, dt.
\end{eqnarray*}
Moreover we know, that the quantity in \eqref{theproduct} can be bounded by $\frac{2k-1}{|x|}$, hence
$$
|f_h''(x)| \leq e_1 + e_1 \frac{2 b_k \, (2k-1)}{|x|} \biggl( \int_{-\infty}^x P(t) \, dt +  \int_{x}^{\infty} (1-P(t)) \, dt \biggr).
$$
Now we bound
$$
\bigl|  \int_{-\infty}^x P(t) \, dt +  \int_{x}^{\infty} (1-P(t)) \, dt \bigr| = \bigl| xP(x) - x(1-P(x)) + 2 \int_x^{\infty} t p(t) \, dt \bigr| \leq 2|x| + 2 \E|Z|,
$$
where $Z$ is distributed according to $p$. Summarizing we have $|f_h''(x)| \leq c_3 \sup_{x} |h'(x)|$ for some constant $c_3$, using
the fact that $f_h$ and therefore $f_h'$ and $f_h''$ are continuous. Hence $f_h$ satisfies Assumption (B1).
\end{proof}

\begin{proof}[Sketch of the proof of Remark \ref{gibbsrem}]
Now let $p(x) = b_k \exp \bigl( - a_k V(x) \bigr)$ and $V$ satisfies the assumptions listed in Remark \ref{gibbsrem}. To proof
that $f_z$ (with respect to $p$) satisfies Assumption (B2), we adapt \eqref{ungl2} as well as
\eqref{ungl3} and \eqref{ungl3b}, using the assumptions on $V$. We obtain for $x >0$
\begin{equation*} 
b_k \, \biggl( \frac{V'(x)}{V''(x) + a_k V'(x)^2} \biggr) \exp \bigl( - a_k \, V(x) \bigr)
\leq  1 - P(x).
\end{equation*}
and for $x >0$
\begin{equation*} 
1-P(x) \leq \min \biggl( \frac 12, \frac{b_k}{a_k \, V'(x)} \biggr)  \exp  \bigl( - a_k \, V(x) \bigr)
\end{equation*}
and for $x < 0$
\begin{equation*} 
P(x) \leq \min \biggl( \frac 12, \frac{b_k}{a_k \, |V'(x)|} \biggr)  \exp  \bigl( - a_k \, V(x) \bigr). 
\end{equation*}
Estimating $(-\psi(x) f_z(x))'$ gives
$$
(-\psi(x) f_z(x))' \leq {\rm const.} \, \frac{V''(x)}{|V'(x)|}.
$$
By our assumptions on $V$, the right hand side can be bounded for $x \geq d$ with $d \in \R_+$ and since
$\psi(x) f_z(x)$ is continuous, it is bounded everywhere. 
\end{proof}

{\em Acknowledgement.}  
During the preparation of our manusscript we became aware of a preprint of S. Chatterjee ans Q.-M. Shao about
Stein's method with applications to the Curie-Weiss model.  As far as we understand, there the authors
give an alternative proof of Theorem 1.2 and 1.3.

\bibliographystyle {amsplain}
\bibliography{08.10.2007}

\newcommand{\SortNoop}[1]{}\def\cprime{$'$}
\providecommand{\bysame}{\leavevmode\hbox to3em{\hrulefill}\thinspace}
\providecommand{\MR}{\relax\ifhmode\unskip\space\fi MR }
\providecommand{\MRhref}[2]{%
  \href{http://www.ams.org/mathscinet-getitem?mr=#1}{#2}
}
\providecommand{\href}[2]{#2}
\begin{thebibliography}{10}

\bibitem{Barbour:1980}
A.~D. Barbour, \emph{Equilibrium distributions for {M}arkov population
  processes}, Adv. in Appl. Probab. \textbf{12} (1980), no.~3, 591--614.
  \MR{MR578839 (81h:60107)}

\bibitem{BenArous/Zeitouni:1999}
G.~Ben~Arous and O.~Zeitouni, \emph{Increasing propagation of chaos for mean
  field models}, Ann. Inst. H. Poincar\'e Probab. Statist. \textbf{35} (1999),
  no.~1, 85--102. \MR{2000a:60041}

\bibitem{Blume/Emery/Griffiths:1971}
M.~Blume, V.~J. Emery, and R.~B. Griffiths, \emph{Ising model for the $\lambda$
  transition and phase separation in ${H}e^3$--${H}e^4$ mixtures}, Phys. Rev. A
  \textbf{4} (1971), 1071--1077.

\bibitem{Chatterjee/Fulman/Roellin:2009}
S.~Chatterjee, J.~Fulman, and A.~R\"ollin, \emph{Exponential approximation by
  {S}tein's method and spectral graph theory}, preprint, 2009.

\bibitem{ChenShao:2001}
L.~H.~Y. Chen and Q.-M. Shao, \emph{A non-uniform {B}erry-{E}sseen bound via
  {S}tein's method}, Probab. Theory Related Fields \textbf{120} (2001), no.~2,
  236--254. \MR{MR1841329 (2002h:60037)}

\bibitem{ChenShao:2005}
\bysame, \emph{Stein's method for normal approximation}, An introduction to
  {S}tein's method, Lect. Notes Ser. Inst. Math. Sci. Natl. Univ. Singap.,
  vol.~4, Singapore Univ. Press, Singapore, 2005, pp.~1--59. \MR{MR2235448}

\bibitem{Eichelsbacher/Loewe:2004}
P.~Eichelsbacher and M.~L{\"o}we, \emph{Moderate deviations for the overlap
  parameter in the {H}opfield model}, Probab. Theory and Related Fields
  \textbf{130} (2004), no.~4, 441--472.

\bibitem{Eichelsbacher/Reinert:2008}
P.~Eichelsbacher and G.~Reinert, \emph{Stein's method for discrete {G}ibbs
  measures}, Ann. Appl. Probab. \textbf{18} (2008), no.~4, 1588--1618.
  \MR{MR2434182}

\bibitem{Ellis:1975}
R.~S. Ellis, \emph{Concavity of magnetization for a class of even
  ferromagnets}, Bull. Amer. Math. Soc. \textbf{81} (1975), no.~5, 925--929.
  \MR{MR0376052 (51 \#12238)}

\bibitem{Ellis:LargeDeviations}
R.~{\,}S. Ellis, \emph{{Entropy, Large Deviations, and Statistical Mechanics}},
  Springer-Verlag, New York, 1985.

\bibitem{Ellis/Monroe/Newman:1976}
R.~S. Ellis, J.~L. Monroe, and C.~M. Newman, \emph{The {\rm {ghs}}\ and other
  correlation inequalities for a class of even ferromagnets}, Comm. Math. Phys.
  \textbf{46} (1976), no.~2, 167--182. \MR{MR0395659 (52 \#16453)}

\bibitem{Ellis/Newman:1978}
R.~S. Ellis and C.~M. Newman, \emph{Limit theorems for sums of dependent random
  variables occurring in statistical mechanics}, Z. Wahrsch. Verw. Gebiete
  \textbf{44} (1978), no.~2, 117--139.

\bibitem{Ellis/Newman:1978b}
\bysame, \emph{Necessary and sufficient conditions for the {G}{H}{S} inequality
  with applications to analysis and probability}, Trans. Amer. Math. Soc.
  \textbf{237} (1978), 83--99. \MR{58 \#11282}

\bibitem{Ellis/Newman/Rosen:1980}
R.~S. Ellis, C.M. Newman, and J.~S. Rosen, \emph{Limit theorems for sums of
  dependent random variables occurring in statistical mechanics, {II}.}, Z.
  Wahrsch. Verw. Gebiete \textbf{51} (1980), no.~2, 153--169.

\bibitem{Ellis/Wang:1990}
R.~S. Ellis and K.~Wang, \emph{Limit theorems for the empirical vector of the
  {C}urie-{W}eiss-{P}otts model}, Stochastic Process. Appl. \textbf{35} (1990),
  no.~1, 59--79. \MR{91m:82023}

\bibitem{Griffiths:1970}
R.~B. Griffiths, C.~A. Hurst, and S.~Sherman, \emph{Concavity of magnetization
  of an {I}sing ferromagnet in a positive external field}, J. Mathematical
  Phys. \textbf{11} (1970), 790--795. \MR{42 \#1412}

\bibitem{Kondo/Otofuji/Sugiyama:1985}
K.~Kondo, T.~Otofuji, and Y.~Sugiyama, \emph{Correlation inequalities for a
  class of even ferromagnets}, J. Statist. Phys. \textbf{40} (1985), no.~3-4,
  563--575. \MR{MR806714 (87a:82011)}

\bibitem{Lebowitz:1974}
J.L. Lebowitz, \emph{G{HS} and other inequalities}, Comm. Math. Phys.
  \textbf{35} (1974), 87--92. \MR{MR0339738 (49 \#4495)}

\bibitem{Rinott/Rotar:1997}
Y.~Rinott and V.~Rotar, \emph{On coupling constructions and rates in the {CLT}
  for dependent summands with applications to the antivoter model and weighted
  {$U$}-statistics}, Ann. Appl. Probab. \textbf{7} (1997), no.~4, 1080--1105.
  \MR{MR1484798 (99g:60050)}

\bibitem{ShaoSu:2006}
Q.-M. Shao and Z.-G. Su, \emph{The {B}erry-{E}sseen bound for character
  ratios}, Proc. Amer. Math. Soc. \textbf{134} (2006), no.~7, 2153--2159
  (electronic). \MR{MR2215787 (2008j:60064)}

\bibitem{Griffiths/Simon:1973}
B.~Simon and R.~B. Griffiths, \emph{The {$(\phi \sp{4})\sb{2}$} field theory as
  a classical {I}sing model}, Comm. Math. Phys. \textbf{33} (1973), 145--164.
  \MR{MR0428998 (55 \#2018)}

\bibitem{Stein:1986}
C.~Stein, \emph{Approximate computation of expectations}, Institute of
  Mathematical Statistics Lecture Notes---Monograph Series, 7, Institute of
  Mathematical Statistics, Hayward, CA, 1986. \MR{MR882007 (88j:60055)}

\bibitem{DiaconisStein:2004}
C.~Stein, P.~Diaconis, S.~Holmes, and G.~Reinert, \emph{Use of exchangeable
  pairs in the analysis of simulations}, Stein's method: expository lectures
  and applications, IMS Lecture Notes Monogr. Ser., vol.~46, Inst. Math.
  Statist., Beachwood, OH, 2004, pp.~1--26. \MR{MR2118600 (2005j:65005)}

\bibitem{Thompson:book}
C.~J. Thompson, \emph{Mathematical statistical mechanics}, The Macmillan Co.,
  New York, 1972, A Series of Books in Applied Mathematics. \MR{57 \#8821}

\end{thebibliography}


\end{document}